\title{Derived Algebraic Cobordism}
\author{Parker E. Lowrey}
\address{Department of Mathematics \\Middlesex College \\The University 
  of Western Ontario \\London, On \\Canada}
\email{plowrey@uwo.ca}
\author{Timo Sch\"urg}
\address{Mathematisches Institut \\Universit\"at Bonn \\Endenicher Allee 
  60\\53115 Bonn\\Germany}
\email{timo\_schuerg@operamail.com}
\DeclareMathOperator{\cl}{cl}
\DeclareMathOperator{\Sup}{Sup}
\DeclareMathOperator{\QCoh}{QCoh}
\DeclareMathOperator{\virdim}{virdim}
\DeclareMathOperator{\Spec}{Spec}
\DeclareMathOperator{\pt}{pt}
\DeclareMathOperator{\Tor}{Tor}
\DeclareMathOperator{\id}{id}
\DeclareMathOperator{\pr}{pr}
\DeclareMathOperator{\Sym}{Sym}
\DeclareMathOperator{\Pre}{pre}
\DeclareMathOperator{\naive}{naive}
\DeclareMathOperator{\FGL}{FGL}
\DeclareMathOperator{\SNC}{SNC}
\newcommand{\Cat}[1]{\mathbf{#1}}
\newcommand{\IA}{\mathbb{A}}
\newcommand{\IL}{\mathbb{L}}
\newcommand{\IP}{\mathbb{P}}
\newcommand{\IR}{\mathbb{R}}
\newcommand{\IZ}{\mathbb{Z}}
\newcommand{\sR}{\mathcal{R}}
\newcommand{\sM}{\mathcal{M}}
\newcommand{\sO}{\mathcal{O}}
\newtheorem{thm}{Theorem}[section]
\newtheorem{lem}[thm]{Lemma}
\newtheorem{prop}[thm]{Proposition}
\newtheorem{cor}[thm]{Corollary}
\theoremstyle{definition}
\newtheorem{defn}[thm]{Definition}
\newtheorem{exmp}[thm]{Example}
\theoremstyle{remark}
\newtheorem{rem}[thm]{Remark}
\def\signed #1{{\leavevmode\unskip\nobreak\hfil\penalty50\hskip2em
    \hbox{}\nobreak\hfil #1%
  \parfillskip=0pt \finalhyphendemerits=0 \endgraf}}
\newsavebox\mybox
\begin{document}
\begin{abstract}
  We construct a cohomology theory using quasi-smooth derived schemes as 
  generators and an analogue of the bordism relation using derived fibre 
  products as relations. This theory has pull-backs along all morphisms 
  between smooth schemes independent of any characteristic assumptions.  
  We prove that in characteristic zero, the resulting theory agrees with 
  algebraic cobordism as defined by Levine and Morel. We thus obtain a 
  new set of generators and relations for algebraic cobordism.
\end{abstract}

\maketitle

\section{Introduction}

In his treatise on the universality of the formal group law of complex 
oriented cobordism \cite{quillen}, Quillen introduced a geometric set of 
generators and relations for complex oriented cobordism. He viewed 
complex cobordism as the universal contravariant functor on smooth 
manifolds endowed with Gysin homomorphisms for proper oriented maps. The 
distinctive feature of complex cobordism is that it comes equipped with 
a first Chern class operator for complex line bundles, and this Chern 
class operator satisfies the universal formal group law.  Quillen's 
construction of this theory is surprisingly simple. In his geometric 
description, cobordism classes in the cobordism group $U^q(X)$ can be 
represented by proper complex oriented maps $Z \to X$, and two such maps 
give the same cobordism class if they arise as fibres of a proper 
complex oriented map $W \to X \times \IR$. The construction of 
pull-backs in this geometric description for all maps between smooth 
manifolds relies on Thom's tranversality theorem. Since any two maps 
between manifolds can be moved by a homotopy until they are transversal, 
the pull-back of any generator $Z \to X$ will again be a proper complex 
oriented map between smooth manifolds.

A corresponding algebraic theory was introduced by Levine and Morel 
\cite{lm}. Compared to complex cobordism, it has a distinctly different 
flavor in two ways. First of all, the formal group law controlling the 
first Chern class operator does not appear in a natural way. In the 
algebraic setting, the formal group law corresponds to a rule on how to 
take a divisor consisting of several components apart. Surprisingly 
enough, as shown by Levine and Pandharipande in \cite{lp}, imposing a 
rule on the simplest possible configuration of divisors is already 
sufficient to obtain a formal group law for the first Chern class 
operator.  Alternatively, the formal group law can be imposed formally 
as done in \cite{lm}. On this issue derived algebraic geometry does not 
offer any insight. The second difference to the topological theory 
arises in the construction of pull-backs along all morphisms between 
smooth schemes.  Since algebraic geometry is by nature much more rigid, 
two morphisms between smooth schemes can in general not be arranged to 
be transversal.  Taking smooth schemes as representatives of algebraic 
cobordism classes, it therefore is difficult to obtain pull-backs in 
algebraic cobordism.  In fact, this is by far the most difficult part in 
the construction of \cite{lm} and is only carried out in characteristic 
zero. After these difficulties are out of the way, the resulting theory 
has a close resemblance to Quillen's theory of complex oriented 
cobordism. For instance, algebraic cobordism satisifies a similar 
universal property, and the cobordism ring of a point is isomorphic to 
the Lazard ring.

A natural idea how to obtain a cohomology theory having pull-backs along 
all morphisms between smooth schemes is to enlarge the class of schemes 
allowed as generators of the theory. Staying in the realm of algebraic 
geometry, it is unclear though which class of schemes to pick. The least 
complicated schemes after smooth schemes are local complete 
intersections, and these are also not stable under pull-backs between 
morphisms of smooth schemes. Here derived algebraic geometry offers a 
natural solution. The class of quasi-smooth derived schemes is a mild 
generalization of smooth schemes and local complete intersections. It 
satisfies excellent stability properties under all derived pull-backs of 
morphisms between smooth schemes, so that any cohomology theory defined 
with quasi-smooth derived schemes as generators will automatically have 
pull-backs along such morphisms. The class of quasi-smooth schemes 
offers a natural algebraic-geometric analogue of moving by a homotopy to 
transversal intersection, see e.g. the work of Ciocan-Fontanine and 
Kapranov in \cite{ck}. Quasi-smooth schemes are also not too far a 
generalization from schemes. For instance, the simplicial commutative 
rings giving the local theory of quasi-smooth schemes only have finitely 
many homotopy groups.  Furthermore, locally every quasi-smooth derived 
scheme can be written as the derived fibre product of un-derived 
schemes.

The aim of this paper is to study the theory obtained by using an 
algebraic version of Quillen's construction with quasi-smooth derived 
schemes replacing smooth manifolds as representatives of bordism 
classes, and by replacing deformation to transversal intersection by 
derived fibre products in the definition of the bordism relation. The 
resulting theory is called \emph{derived algebraic cobordism}. By 
construction, the resulting cohomology theory will have pull-backs along 
all morphisms between smooth manifolds, independent of any 
characteristic assumption on the base field. In particular, we obtain a 
first Chern class operator for line bundles. As in the algebraic theory 
of Levine and Morel, it is too much to expect that this operator will 
satisfy a formal group law.  After imposing this formal group law, we 
obtain a theory that resembles the theory of Levine and Morel. In 
characteristic zero we prove that there is a natural comparison map 
between algebraic bordism and derived algebraic bordism, and this map is 
in fact an isomorphism. We thus obtain a new set of generators and 
relations for algebraic cobordism.

The motivation came from finding an analog of Spivak's \cite{spivak} and 
Joyce's \cite{joyce} results in derived differential geometry and 
d-manifolds, respectively.  Using generators and relations as prescribed 
above, Joyce and Spivak define derived cobordism for any smooth 
manifold, and both are able to prove that derived cobordism agrees with 
classical cobordism.  Our result is slightly stronger in that it shows 
for any quasi-projective derived scheme (including both singular and 
derived), the aforementioned isomorphism exists. 

This result also provides a conceptual explanation why the virtual 
fundamental classes of Behrend--Fantechi \cite{bf} and Li--Tian 
\cite{lt} exist. It implies that every quasi-smooth derived scheme of 
virtual dimension $d$ is in fact bordant to a smooth scheme of dimension 
$d$. 

The method of proof employed here is remarkably similar to the proofs of 
Joyce and Spivak. For those acquainted with the proof, we have the 
following rough analogies.
\begin{table}[!h]
  \begin{tabular}{r|l}
    differential geometric& algebro geometric \\
    \hline
    manifold & smooth scheme\\
    derived manifold & quasi-smooth derived scheme\\
    tubular neighborhood & deformation to the normal cone\\
    Thom's transversality & Levine's moving lemma
  \end{tabular}
\end{table}

More precisely, following ideas from \cite{manolache}, we introduce 
orientations for quasi-smooth morphisms in algebraic bordism. In the 
case of a quasi-smooth derived scheme mapping to a point, this gives 
exactly the virtual fundamental class. Using the tools summarized in the 
above table, we then prove a Grothendieck--Riemann--Roch result stating 
that these orientations are compatible with the orientations defined for 
derived algebraic bordism (Corollary \ref{cor:GRR}). As a consequence, 
we obtain that the natural transformation
\[
  \vartheta_{\Omega} \colon d\Omega_* \longrightarrow \Omega_*
\]
obtained from the universal property of $d\Omega_*$ is an isomorphism.

An immediate consequence of this Grothendieck--Riemann--Roch result is 
that derived and classical bordism coincide as homology theories on the 
category of derived quasi-projective schemes.

\medskip
\noindent {\bf Theorem \ref{thm:dOmEqOm}}\quad
  {\sl For all $X \in \Cat{dQPr}_k$ the morphism
  \[
    \vartheta_{d\Omega} \colon \Omega_{*} (X)
    \longrightarrow d\Omega_{*}(X)
  \]
  is an isomorphism.}
\medskip

To emphasize the analogy between derived fibre products and deformation 
to transversal intersection, we prove the following formula for the 
intersection product in derived algebraic cobordism:

\medskip
\noindent {\bf Theorem \ref{thm:IntProd}}\quad
{\sl  Let $X \in \Cat{Sm}_k$. Then the intersection product is 
  represented by the homotopy fibre product:
    \[
          [Y] \cdot [Z] = [Y \times^h_X Z] \in d\Omega^*(X).
            \]

  }
\medskip

We briefly review the argument and the contents of the paper.

In Section \ref{sect:obmf} we introduce the abstract notion of an 
oriented Borel--Moore functor of geometric type with quasi-smooth 
pull-backs. We then define derived algebraic bordism in Section 
\ref{sect:dOm} and show that it is the universal oriented Borel--Moore 
functor of geometric type with quasi-smooth pull-backs. In Section 
\ref{sect:qSmPb} we study some properties of the underived bordism 
$\Omega_*$. We first extend $\Omega_*$ to a homology theory on derived 
quasi-projective schemes and verify that is the universal oriented 
Borel--Moore homology theory of geometric type. This yields a natural 
transformation $\vartheta_{d\Omega} \colon \Omega_* \to d\Omega_*$. The 
remainder of the section is devoted to construncting pull-backs along 
quasi-smooth morphisms in $\Omega_*$. Once this is done, we obtain, 
using the universal property of $d\Omega_*$, a natural transformation 
$\vartheta_{\Omega_*} \colon d\Omega_* \to \Omega_*$ in the opposite 
direction. Finally, in \ref{sect:Spivak} we show that these natural 
transformations are inverse to each other using a 
Grothendieck--Riemann--Roch type theorem.

The authors would like to thank Barbara Fantechi for explaining the 
virtual pull-backs of \cite{manolache} and Gabriele Vezzosi for the 
suggestion of imposing the formal group law instead of trying to prove 
it exists, which the authors tried unsuccessfully for quite some time.  
We would also like to thank David Ben-Zvi for helpful comments on 
various drafts of this work, and Bertrand To\"en for some help with 
deformation to the normal cone for derived schemes.

Finally we would like to thank the Mathematical Institute of the 
University of Bonn for its hospitality, and the SFB/TR 45 `Periods, Moduli Spaces and 
Arithmetic of Algebraic Varieties' of the DFG (German Research 
Foundation) for financial support.

\subsection*{Notation}
We will work throughout over a base field $k$. Starting from Section
\ref{sect:qSmPb}, this field will be of characteristic zero.

For any scheme or derived scheme $X$ we will denote by $\QCoh(X)$ the 
$\infty$-category of quasi-coherent sheaves on $X$ introduced by Lurie 
in \cite{DAGVIII}.  Roughly, objects of $\QCoh(X)$ correspond to 
possibly unbounded complexes of quasi-coherent sheaves on $X$.

We have adopted using $\mathbb{L}$ for the Lazard ring and $L_X$ for the 
cotangent complex of a scheme $X$.

In the text, various categories of schemes and derived schemes are 
encountered. We have used bold font with hopefully self-explanatory 
names for theses categories. For instance, $\Cat{dQPr}_k$ denotes the 
category of derived quasi-projective schemes over $k$, $\Cat{Sm}_k$ 
denotes the category of smooth quasi-projective schemes over $k$ and so 
forth.

Given a scheme $X$ and the data of an effective Cartier divisor on $X$, 
$D$, we do not distinguish between the Cartier divisor and the natural 
sub-scheme it generates.  We let $\sO_X(D)$ be the associated line 
bundle on $X$, with the natural section implicit.  We let $|D|$ denote 
the support of the Cartier divisor.   This is the reduced sub-scheme of 
$D$.

Similar to Fulton's convention \cite[Convention 1.4]{fulton}, if 
$i\colon Y \hookrightarrow X$ is a closed embedding and $\alpha \in 
d\Omega_*(Y)$, when no confusion can arise we write $\alpha \in 
d\Omega_*(X)$ rather then $i_\ast \alpha$.

Throughout this text, $f\colon X \to Y$ is transverse to $g\colon Z \to 
Y$ if they are Tor-independent and $X\times_Y Z \to Z$ is smooth. Here 
Tor-independence means that $\Tor_i^{\sO_Y}(\sO_X,\sO_Z)=0$ for all $i 
>0$.

\section{Oriented Borel--Moore functors on derived schemes}
\label{sect:obmf}

In this section we introduce oriented Borel--Moore functors on 
quasi-projective derived schemes which have quasi-smooth pullbacks. We 
believe this is the right setting for studying virtual fundamental 
classes, since for any quasi-projective quasi-smooth derived scheme $X$ 
we can then define the virtual fundamental class via $\pi_X ^{*} [1]$, 
where $\pi_X \colon X \to \pt$ is the structure morphism.

Following Levine--Morel \cite{lm}, we first introduce oriented 
Borel--Moore functors with product. In these theories one has no control 
over the behavior of the first Chern class operator. We later pass to 
oriented Borel--Moore functors of geometric type. There the first Chern 
class satisfies a formal group law.

\subsection{Oriented Borel--Moore functors with product}

The definition of an oriented Borel--Moore functor with product on 
derived schemes is an immediate generalization of the original 
definition of Levine and Morel in \cite{lm} for schemes. Let 
$\Cat{dQPr}_k$ denote the category of quasi-projective derived schemes 
over $k$. Let $\Cat{dQPr}'_k$ denote the subcategory of $\Cat{dQPr}_k$ 
with proper morphisms.
\begin{defn}
  \label{defn:BMF}
  An oriented Borel--Moore functor with product on $\Cat{dQPr}_k$ is 
  given by:
  \begin{enumerate}[({D}1)]
    \item An additive functor $A_* \colon \Cat{dQPr}'_k \to 
      \Cat{Ab}_{*}$.
    \item For each smooth morphism $f \colon X \to Y$ in $\Cat{dQPr}_k$ 
      of pure relative dimension $d$, a homomorphism of graded abelian 
      groups
      \[
        f^* \colon A_*(Y) \longrightarrow A_{*+d}(X)
      \]
    \item For each line bundle $L$ on $X$, a homomorphism of graded 
      abelian groups
      \[
        c_1 (L) \colon A_*(X) \longrightarrow A_{*-1}(X).
      \]
    \item For each pair $(X,Y)$ in $\Cat{dQPr}_k$, a bilinear graded 
      pairing
      \[
        \times \colon A_*(X) \times A_*(Y) \longrightarrow A(X \times Y)
      \]
      which is commutative, associative, and admits a distinguished 
      element $1 \in A_0(\pt)$ as a unit.
  \end{enumerate}
  These data are required to satisfy the following axioms:
  \begin{enumerate}[({A}1)]
    \item Let $f \colon X \to Y$ and $g \colon Y \to Z$ be smooth 
      morphisms in $\Cat{dQPr}_k$ of pure relative dimension. Then
      \[
        (g \circ f)^* = f^* \circ g^*.
      \]
      Moreover, $\id_X^* = \id_{A_*(X)}$.
    \item Let $f \colon X \to Z$ and $g \colon Y \to Z$ be morphisms in 
      $\Cat{dQPr}_k$, where $f$ is proper and $g$ is smooth of pure 
      relative dimension. Let
      \[
        \xymatrix{
          W \ar[r]^{g'} \ar[d]_{f'} & X \ar[d]^{f} \\
          Y \ar[r]_{g} & Z
        }
      \]
      be the resulting Cartesian square. Then
      \[
        g^*f_* = f'_*g^{\prime *}.
      \]
    \item Let $f \colon X \to Y$ be proper and let $L \to Y$ be a line 
      bundle. Then
      \[
        f_* \circ c_1(f^* L) = c_1(L) \circ f_*.
      \]
    \item Let $f \colon X \to Y$ be smooth of pure relative dimension, 
      and let $L \to Y$ be a line bundle. Then
      \[
        c_1(f^*L) \circ f^* = f^* \circ c_1(L).
      \]
    \item For all line bundles $L$ and $M$ on $X \in \Cat{dQPr}_k$ we 
      have
      \[
        c_1(L) \circ c_1 (M) = c_1(M) \circ c_1 (L).
      \]
      If $L$ and $M$ are isomorphic as line bundles on $X$, then 
      $c_1(L)=c_1(M)$ holds.
    \item For proper morphisms $f$ and $g$ we have
      \[
        \times \circ (f_* \times g_*) = (f \times g)_* \circ \times.
      \]
    \item For smooth morphisms $f$ and $g$ we have
      \[
        \times \circ (f^* \times g^*) = (f \times g)^* \circ \times.
      \]
    \item For $X,Y \in \Cat{dQPr}_k$ and $L \to X$ a line bundle we have
      \[
        (c_1(L)(\alpha)) \times \beta = c_1(p_1^*(L))(\alpha \times 
        \beta).
      \]
  \end{enumerate}
\end{defn}

\begin{rem}
  It might seem surprising that in (A2) the square is only required to 
  be Cartesian and not homotopy Cartesian. But in this case the 
  Cartesian product is a homotopy fiber product since $g$ is assumed to 
  be smooth and thus flat.
\end{rem}

The following Lemma follows immediately from the definitions.

\begin{lem}
  \label{lem:restriction}
  An oriented Borel--Moore functor with product on $\Cat{dQPr}_k$ 
  restricted to $\Cat{QPr}_k$ is an Borel--Moore functor with product on 
  $\Cat{QPr}_k$.
\end{lem}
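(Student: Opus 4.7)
The plan is to observe that $\Cat{QPr}_k$ sits as a full subcategory of $\Cat{dQPr}_k$ (every classical quasi-projective scheme is in particular a derived quasi-projective scheme, and morphisms between classical schemes are the same in both settings), so the restriction of the data is well-defined. I would then verify each piece of data (D1)--(D4) and each axiom (A1)--(A8) of Definition \ref{defn:BMF} in turn.

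First I would note that (D1) restricts to an additive functor $A_* \colon \Cat{QPr}'_k \to \Cat{Ab}_*$ simply because $\Cat{QPr}'_k$ is a subcategory of $\Cat{dQPr}'_k$. For (D2), I would check that a smooth morphism $f \colon X \to Y$ of classical quasi-projective schemes is in particular a smooth morphism in $\Cat{dQPr}_k$, so the pullback $f^*$ is defined. For (D3), line bundles on a classical scheme $X$ are the same whether $X$ is viewed as classical or as derived, so $c_1(L)$ restricts sensibly. For (D4), the product $X \times Y$ of classical schemes is again classical, so the external product lands in $A_*(X \times Y)$ with $X \times Y \in \Cat{QPr}_k$.

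The axioms (A1), (A3)--(A8) are immediate since they are identities in a setting where every object involved is already classical. The only point requiring a brief comment is (A2): for a Cartesian square in $\Cat{dQPr}_k$ with $f$ proper and $g$ smooth, the base change identity $g^* f_* = f'_* g^{\prime *}$ must restrict to the analogous identity computed inside $\Cat{QPr}_k$. Here one uses the observation already made in the remark after Definition \ref{defn:BMF}: since $g$ is smooth and hence flat, the classical fibre product in $\Cat{QPr}_k$ coincides with the homotopy fibre product in $\Cat{dQPr}_k$. Hence the Cartesian square formed in $\Cat{QPr}_k$ is also Cartesian in $\Cat{dQPr}_k$, and the axiom in $\Cat{dQPr}_k$ applies verbatim.

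There is no genuine obstacle; the only subtle point is the compatibility of classical and derived fibre products in (A2), which is handled by flatness of smooth morphisms. Accordingly, the proof amounts to running through the list of data and axioms and observing that each is stable under the obvious restriction functor.
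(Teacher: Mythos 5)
Your proof is correct and matches the paper's, which dismisses the lemma with the single remark that it "follows immediately from the definitions." You have simply unpacked that claim, and you correctly identify the only point that merits a word — that for (A2) the classical Cartesian square is a homotopy Cartesian square because smooth morphisms are flat — which is precisely the content of the remark the paper places immediately after Definition \ref{defn:BMF}.
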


For a general oriented Borel--Moore functor one has no control over the 
behavior of the first Chern class operator. The next better behaved 
type of homology theory introduced by Morel and Levine is an oriented 
Borel--Moore functor of geometric type. For such a theory the first Chern 
class operator is controlled by a formal group law. Since the conditions 
only involve smooth schemes and a smooth derived scheme is always 
classical, the definition carries over immediately to derived schemes.  
We briefly recall the notion for the reader's convenience.

Recall that an \emph{oriented Borel--Moore $R_*$-functor with product} is 
an oriented Borel--Moore functor $A_*$ equipped with a graded ring 
homomorphism $R_* \to A_*(k)$. Here $R_*$ is graded unital commutative 
ring.

\begin{defn}
  \label{defn:BMFgt}
  Let $\IL_*$ be the Lazard ring, graded such that the universal formal 
  group law has total degree -1. An \emph{oriented Borel--Moore functor 
    of geometric type} is an oriented Borel--Moore $\IL_*$-functor $A_*$ 
  on $\Cat{dQPr} _k$ satisfying the following additional axioms:
  \begin{enumerate}
    \item[(Dim)] For a smooth scheme $X$ and line bundles $L_1, \, \dots 
      , L_r$ on $X$ with $r > \dim (X)$ we have
      \[
        c_1(L_1) \circ \dots \circ c_1 (L_r) (1_X) =0 \in A_*(X).
      \]
    \item[(Sect)] For a smooth scheme $X$ and a section $s \colon X \to 
      L$ of a line bundle $L$ on $X$ transverse to the zero section, we 
      have
      \[
        c_1 (L) (1_X) = i_* (1_Z),
      \]
      where $Z$ is the zero-set of $s$ and $i \colon Z \to X$ is the 
      inclusion.
    \item[(FGL)] For a smooth scheme $X$ and line bundles $L,M$ on $X$, 
      we have
      \[
        F_A (c_1(L),c_1(M))(1_X) = c_1 (L \otimes M)(1_X) \in A_*(X),
      \]
      where $F_A$ is the image of the universal formal group law on 
      $\IL$ under the homomorphism given by the $\IL_*$-structure.
  \end{enumerate}
\end{defn}

\begin{rem}
  \label{rem:loc}
  In \cite{lm}, Borel--Moore functors satisfying more axioms than those 
  of a functor of geometric type are considered. One of the axioms for a 
  weak homology theory is a weak localization axiom closely related to 
  the axiom (Sect) of a functor of geometric type:
  \begin{enumerate}
    \item [(Loc)] Let $L$ be a line bundle on a scheme $X$ admitting a 
      section $s \colon X \to L$ which is transverse to zero. Let $i 
      \colon Z \to X$ be the inclusion of the zero-set of $s$. Then the 
      image of $c_1(L) \colon A_*(X) \to A_{*-1}(X)$ is contained in the 
      image of $i_* \colon A_{*-1}(Z) \to A_{*-1}(X)$.
  \end{enumerate}
\end{rem}

Again, the following lemma is immediate from the definitions.
\begin{lem}
  \label{lem:restrictionGT}
  An oriented Borel--Moore functor of geometric type on $\Cat{dQPr}_k$ 
  restricted to $\Cat{QPr}_k$ is an Borel--Moore functor of geometric 
  type on $\Cat{QPr}_k$.
\end{lem}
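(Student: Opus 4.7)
The plan is to observe that this lemma is essentially formal, being an immediate consequence of Lemma \ref{lem:restriction} together with the fact that the three additional axioms (Dim), (Sect), and (FGL) are statements purely about classical smooth schemes and line bundles on them.

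First I would apply Lemma \ref{lem:restriction} to deduce that the restriction of $A_*$ to $\Cat{QPr}_k$ is an oriented Borel--Moore functor with product on $\Cat{QPr}_k$. Next, the $\IL_*$-structure on $A_*$ is the datum of a graded ring homomorphism $\IL_* \to A_*(k)$, and since the base field $k$ (i.e.\ the point $\Spec k$) is an object of $\Cat{QPr}_k$ and $A_*(k)$ is the same abelian group whether one views it via $\Cat{dQPr}_k$ or $\Cat{QPr}_k$, this homomorphism restricts unchanged to endow the restricted functor with an $\IL_*$-structure.

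The substantive (but still trivial) point is that each of the three axioms (Dim), (Sect), (FGL) quantifies only over a smooth scheme $X$ and line bundles on $X$. Since every smooth quasi-projective scheme over $k$ is an object of both $\Cat{QPr}_k$ and $\Cat{dQPr}_k$, and since line bundles on such $X$ are the same in either context, each axiom is an assertion about equations in $A_*(X)$ for $X \in \Cat{Sm}_k$. Such assertions hold for the restricted functor exactly when they hold for the original functor. In particular, in (Sect) the zero-set $Z$ of a section $s \colon X \to L$ transverse to the zero section is a classical smooth subscheme of $X$, so the inclusion $i\colon Z \to X$ and the identity $c_1(L)(1_X) = i_*(1_Z)$ make sense and hold identically in both settings.

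There is no real obstacle: the lemma is a direct unpacking of definitions, recorded here for later use (in particular to enable the application of Levine--Morel's results on $\Omega_*$ obtained by restricting a theory defined on $\Cat{dQPr}_k$). The only point worth noting is the consistency observation, already made in the remark following Definition \ref{defn:BMF}, that smoothness forces the derived and classical structures to coincide, so no issues arise from potentially having to reinterpret pull-backs along smooth morphisms or sections of line bundles.
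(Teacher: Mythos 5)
Your proof is correct and matches the paper's intent: the paper simply states the lemma is ``immediate from the definitions,'' and your unpacking — reduce to Lemma \ref{lem:restriction} for the oriented Borel--Moore functor with product structure, note the $\IL_*$-structure restricts verbatim, and observe that (Dim), (Sect), (FGL) quantify only over classical smooth schemes and line bundles on them — is exactly what that remark elides.
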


If, in addition to the axiom (Loc) of Remark \ref{rem:loc}, one requires 
the Projective Bundle Theorem, and an extended homotopy relation, then 
one obtains the definition of a \emph{weak homology theory}. From there, 
one can further ask for pullback along locally complete intersection 
morphisms and certain cellular decomposition formulas to obtain a well 
behaved functor called a \emph{Borel--Moore homology theory}.

\subsection{Borel--Moore functors with quasi-smooth pull-backs}

To introduce virtual fundamental classes in an oriented 
Borel--Moore functor it is necessary to define orientations, i.e., 
pullbacks,  for quasi-smooth morphisms. We first recall some basic 
notions on quasi-smooth morphisms.

\begin{defn}
  A morphism $f \colon X \to Y$ of derived schemes is 
  \emph{quasi-smooth} if $f$ is locally of finite presentation and the 
  relative cotangent complex $L_{X/Y}$ is of Tor-amplitude $\leq 1$.
\end{defn}

\begin{exmp}
  Let $f \colon X \to Y$ be a local complete intersection morphism of 
  schemes. Then $f$ is quasi-smooth.
\end{exmp}

\begin{rem}
  Note that being locally of finite presentation as a morphism of 
  derived schemes is in general a stronger condition than requiring the 
  underlying morphism of schemes to be locally of finite presentation in 
  the usual sense. For instance, being locally of finite presentation as 
  morphism of derived schemes implies that the relative cotangent 
  complex is perfect. This is only true for local complete intersection 
  morphisms of classical schemes.
\end{rem}

For any point $p \colon \Spec A \to X$ the above condition implies that 
$p^* L_{X/Y}$ is locally isomorphic to a two-term complex of vector 
bundles. This leads to the definition of virtual dimension.

\begin{defn}
  \label{defn:virdim}
  Let $f \colon X \to Y$ be a quasi-smooth morphism, and let $p \colon 
  \Spec k \to X$ be a $k$-point of $X$.  Then the \emph{virtual 
    dimension of $f$ at $p$} is defined as
  \[
    \virdim (f,p) = H_{0}(p^* L_{X/Y}) - H_{1}(p^* L_{X/Y}).
  \]
\end{defn}

\begin{rem}
  The virtual dimension of $f$ at $p \in X$ is a locally constant 
  function.
\end{rem}

For later uses we will introduce quasi-smooth morphisms that admit a 
factorization into a quasi-smooth embedding followed by a smooth 
morphism. In the case of local complete intersection morphisms of 
schemes this is often built into the definition. As a rule of thumb, in 
English texts a local complete intersection has a global factorization 
by definition, e.g., \cite{fulton, lm}, whereas in French texts this is 
an additional property, e.g., \cite{verdier}.

\begin{defn}
  \label{defn:smoothable}
  We say that a morphism $f \colon X \to Y$ of derived schemes is 
  \emph{smoothable} if it factors as closed embedding followed by smooth 
  morphism. Such a factorization is called a \emph{smoothing}.
\end{defn}

\begin{rem}
  Let $f \colon X \to Y$ be a quasi-smooth morphism, and let $X 
  \overset{i}{\hookrightarrow} M \overset{p}{\longrightarrow} Y$ be a 
  smoothing. It then follows that $i \colon X \hookrightarrow M$ 
  is a quasi-smooth embedding.
\end{rem}

\begin{rem}
  Locally a smoothing exists for any quasi-smooth morphism.
\end{rem}

\begin{exmp}
  Any morphism $f \colon X \to Y$ with $X, Y \in \Cat{dQPr}_k$ admits a 
  smoothing.
\end{exmp}

Once we have pull-backs along quasi-smooth morphism we will, in 
particular, have fundamental classes for local complete intersections. We need
a normalization property for these fundamental 
classes to ensure compatibility with fundamental classes for local 
complete intersections in other homology theories. This normalization 
is automatically satisfied for Borel--Moore homology theories.

To state the desired normalization property, we first borrow notation from \cite[\S3.1]{lm}.  Given a formal group law $F(u,v) \in R[[u,v]]$ for some 
commutative ring $R$ there exists the \emph{difference group law}
\[
  F^{-}(u,v) \in R[[u,v]].
\]
If we let $\chi(u)$ denote the unique inverse power series satisfying $F(u, 
\chi (u))=0$, this difference group law is defined by the equation
\[
  F(u, v) = F^-(u,\chi(v)).
\]
Often the suggestive notation $+_F$ is used in place of the formal 
group law and $-_F$ for the difference. Given an integer $n$, denote
\[ [n]_F \cdot u := \begin{cases}  u +_F \ldots +_F u & n \geq 0 \\ u -_F \ldots -_F u & n < 0\end{cases}
\] 
where the operation is performed $|n|$ times, and given integers $n_1, \ldots, n_m$   
\[ F^{n_1, \ldots, n_m}(u_1, \ldots, u_m) := [n_1]_F \cdot u_1 +_F [n_2]_F \cdot u_2 \ldots +_F [n_m]_F\cdot u_m 
\]
One can guess many 
identities among the formal power series with this notation. For instance
\[
  (u +_F v)-(0+_Fv)=u
\]
translates to the formal power series identity
\[
  F^{-}(F(u,v),F(0,v))=u
\]
used in Lemma \ref{lem:c1L}.

Let $E$ be a strict normal crossing divisor on a smooth scheme $X$ with 
support $|E|$.  Following \cite{lm}, if $A_\ast$ is any Borel--Moore functor with first Chern classes 
obeying a formal group law and having proper push-forwards (this included Borel--Moore functors of geometric type), there exists a class $[E \to 
|E|] \in A_*(|E|)$ defined as follows.   Writing $E = \sum_{j=1}^m n_j E_j$ with each 
$E_j$ integral, for any index $J=(j_1, \, \dots , \, j_m)$ 
with $||J|| \leq 1$\footnote{Here $||J||:= \Sup_i(j_i)$} let  
$E^J:= \cap _{i, j_i=1} E_i$ be the ``$J$-th face'' and $i^J: E^J \to |E|$ the natural inclusion.   If $L_i = \sO_X(E_i)$ and $L_i^J = (i^J)^* L_i$, define
\begin{equation}
  \label{eq:FundSNC}
  [E \to |E|] = \sum_{J, ||J||\leq 1} i_*^J \left( [F_J^{n_1,\, \dots, 
      \, n_m}(L_1^J, \, \dots , \, L_m^J)]\right).
\end{equation}
With the notation now set, we define the central notion of this section.

\begin{defn}
  \label{defn:BMFqs}
  A \emph{oriented Borel--Moore functor with quasi-smooth pullbacks} (also referred to as ``with quasi-smooth orientations'', or ``with quasi-smooth Gysin-maps'') on 
  $\Cat{dQPr}_k$ consists of an oriented Borel--Moore functor $A_{*}$ of 
  geometric type equipped with:
  \begin{enumerate}
    \item[(B1)] For each equi-dimensional quasi-smooth morphism $f 
      \colon Y \to X$ of relative virtual dimension $d$ a homomorphism 
      of graded abelian groups
      \[
        f^{*} \colon A_{*}(X) \longrightarrow A_{*+d}(Y).
      \]
  \end{enumerate}
  These pull-backs should satisfy the following axiom
  \begin{enumerate}[({QS}1)]
    \item Let $s \colon L \to X$ be a section of a line bundle. Then 
      $c_1(L)=s^*s_*$.
    \item Let $f \colon X \to Y$ and $g \colon Y \to Z$ be quasi-smooth 
      morphisms of pure relative virtual dimension. Then
      \[
        (g \circ f)^* = f^*g^*.
      \]
    \item Let $f \colon X \to Z$ and $g \colon Y \to Z$ be morphisms 
      giving the homotopy Cartesian square
      \[
        \xymatrix{
          W \ar[r]^{g'} \ar[d]_{f'} & X \ar[d]^{f} \\
          Y \ar[r]_{g} & Z
        }
      \]
      with $f$ proper and $g$ quasi-smooth and virtually 
      equi-dimensional.
      Then
      \[
        g^{*}f_{*} = f'_{*} g^{\prime *}
      \]
    \item For quasi-smooth morphisms $f$ and $g$ we have
      \[
        \times \circ (f^* \times g^*) = (f \times g)^* \circ \times.
      \]
    \item For any strict normal crossing divisor $E$ in a smooth scheme 
      $X$ we have
      \[
        \pi_E^*([1])= (\zeta_{E})_\ast ([E \to |E|]).
      \]
      Here $\pi _E \colon E \to \pt$ is the structure morphism of $E$, 
      which is a local complete intersection morphism,  and 
      $\zeta_{E}: |E| \to E$ is the natural embedding.  
  \end{enumerate}
\end{defn}

\begin{rem}
  The axiom (QS5) is likely equivalent to the Extended Homotopy 
  Property.  In particular, by \cite[Chapter 7]{lm} it is satisfied by 
  all Borel--Moore homology functors.  As mentioned previously, it is 
  directly related to the universal morphism from regular bordism 
  commuting with locally complete intersection pullback (a priori, it 
  only commutes with smooth pullback).  Since we want our derived 
  bordism group to extend bordism (it will in fact be an isomorphism), 
  it is a requirement.  This hypothesis will likely be relegated 
  irrelevant with more investigation in properties of the ``naive'' 
  derived bordism groups.
\end{rem}

The pull-back $f^*$ is called an orientation of the quasi-smooth 
morphism. In any oriented Borel--Moore functor with quasi-smooth 
pull-backs or orientations we can define virtual fundamental classes.

\begin{defn}
  Let $X$ be a quasi-projective quasi-smooth derived scheme, and let 
  $A_*$ be an oriented Borel--Moore functor with quasi-smooth pull-backs.  
  Then the \emph{virtual fundamental class} is defined as
  \[
    \pi_X ^* ([1]) \in A_*(X).
  \]
  Here $\pi_X \colon X \to \pt$ is the structure morphism, and $\pi_X^*$ 
  is quasi-smooth pull-back.
\end{defn}

The following definition summarizes all desirable properties a homology 
theory with quasi-smooth pull-backs should have.
\begin{defn}
\label{defn:OBMHTqs}
  A \emph{oriented Borel--Moore homology theory with quasi-smooth 
    pullbacks}  on $\Cat{dQPr}_k$ is given by an additive functor $A_* 
  \colon \Cat{dQPr}'_k \to \Cat{Ab}_*$ equipped with quasi-smooth 
  pull-backs and an external product, such that   
  \begin{enumerate}[({BM}1)]
    \item The axioms (QS2), (QS 3) and (QS4) hold.
    \item The Projective Bundle Theorem of \cite[Definition 5.1.3]{lm} 
      holds.
    \item The Extended Homotopy relation of \cite[Definition 5.1.3]{lm} 
      holds.
    \item The Cellular Decomposition relation of \cite[Definition 
      5.1.3]{lm} holds.
  \end{enumerate}
\end{defn}

\section{Derived algebraic bordism}
\label{sect:dOm}

In this section we define derived algebraic bordism by generators and 
relations. Derived algebraic bordism will turn out to be the universal 
oriented Borel--Moore functor of geometric type with orientations for 
quasi-smooth morphisms. Since a quasi-smooth morphism of schemes is a 
local complete intersection morphism, derived algebraic bordism has 
pull-backs for local complete intersection morphisms. Besides all the 
axioms necessary for an oriented Borel--Moore functor of geometric type, 
derived algebraic bordism additionally satisfies the axiom (Loc) of 
Remark \ref{rem:loc}, although we will not use this.

We begin with the generators of our theory.
\begin{defn}
  \label{defn:generators}
  Let $X$ be a quasi-projective derived scheme over $k$. Denote by 
  $\sM_n (X)^{+}$ the free abelian group generated by proper morphisms
  \[
    f \colon Y \longrightarrow X
  \]
  where $Y \in \Cat{QSm}_k$ is irreducible and of virtual dimension $n$.  
  We will refer to elements of $\sM_*(X)$ as \emph{derived bordism 
    cycles}.
\end{defn}

We next introduce relations among the generators.
\begin{defn}
  \label{defn:relations}
  Let $X \in \Cat{dQPr}_k$, and denote by $p \colon X \times \IP^1 \to 
  \IP^1$ the projection onto the second factor. Let $Y \in \Cat{dQPr}_k$ 
  be quasi-smooth of pure virtual dimension, and let
  \[
    g \colon Y \to X \times \IP^1
  \]
  be a proper morphism. We can then form the homotopy Cartesian square
  \[
    \xymatrix{
      Y_0 \ar[r] \ar[d] & Y \ar[d]_{g} & Y_{\infty} \ar[d] \ar[l] \\
      X \ar[r] \ar[d] & X \times \IP^{1} \ar[d]_{p} & X \ar[d] \ar[l] \\
      0 \ar[r] & \IP^{1} & \ar[l] \infty.
    }
  \]
  The associated \emph{homotopy fiber relation} then is
  \[
    [Y_0 \to X] - [Y_{\infty} \to X].
  \]
  Let $\sR_{*}(X) \subset \sM_{*}(X)^+$ be the subgroup generated by all 
  homotopy fiber relations.
\end{defn}

\begin{rem}
  It follows from basic properties of homotopy fiber products that $Y_0$ 
  and $Y_{\infty}$ are quasi-smooth derived schemes without any 
  assumptions on $0$ and $\infty$ being regular values of $p \circ g$.
\end{rem}

We can now define a naive version of derived algebraic bordism.

\begin{defn}
  \label{defn:ndCob}
  Let $X \in \Cat{dQPr}_k$. Then \emph{naive derived algebraic bordism} 
  is defined by
  \[
    d\Omega^{\naive}_{*}(X)=\sM_{*}(X)^+/<\sR_{*}(X)>.
  \]
\end{defn}

\begin{rem}
  Let $[Y \to X]$ and $[Y' \to X]$ be generators of 
  $d\Omega^{\naive}_{*}(X)$ where $Y$ is weakly equivalent to $Y'$.  
  Using the homotopy fiber relation it follows that $[Y \to X]=[Y' \to 
  X]$ in $d\Omega^{\naive}_{*}(X)$.
\end{rem}
The functor $d\Omega_* ^{\naive}$ already has some of the 
requisite structure of an oriented Borel--Moore functor. In particular, 
we can immediately define  push-forward along proper maps and pullback 
along quasi-smooth morphisms (which includes smooth morphisms). Let $g 
\colon X \to X'$ be a proper map in $\Cat{dQPr}_k$. The map
\[
  g_* \colon \sM_{*}(X)^+ \longrightarrow \sM_*(X')^+
\]
is given by
\[
  g_*([f \colon Y \to X]) = [g \circ f \colon Y \to X'].
\]
It is immediate that this descends to a functorial push-forward
\[
  g_* \colon d\Omega_* ^{\naive}(X) \to d\Omega_* ^{\naive}(X').
\]


\begin{defn}
  \label{defn:pb}
  Let $g \colon X \to X'$ be a quasi-smooth morphism between 
  quasi-projective derived schemes. The \emph{quasi-smooth pull-back}
  \[
    g^* \colon \sM_{*}(X')^+ \longrightarrow \sM_{*}(X)^+
  \]
  is given by
  \[
    g^*([f \colon Y \to X']) = [Y \times^h_{X'} X \to X].
  \]
\end{defn}

\noindent Again, it is immediate that this descends to a functorial pull-back
\[
  g^* \colon d\Omega_* ^{\naive}(X') \longrightarrow d\Omega^{\naive}_* 
  (X).
\]

\begin{rem}
  In case $g$ is smooth and thus flat, the usual fiber product is a 
  homotopy fiber product and we arrive at
  \[
    g^*([f \colon Y \to X']) = [Y \times_{X'} X \to X].
  \]
\end{rem}

Since we have defined pull-backs for quasi-smooth morphisms we 
automatically have first Chern class operators.

\begin{defn}
  \label{defn:c1}
  Let $X \in \Cat{dQPr}_k$, $L$ a line bundle on $X$, and $s_0 \colon X 
  \to L$ the zero-section. We can then define the first Chern class 
  operator via
  \[
    c_1(L)([Y \to X]) = s_0^* (s_0)_* ([Y \to X]).
  \]
  More generally, for any vector bundle $E \to X$ of rank $r$ we define 
  the \emph{Euler class} or \emph{top Chern class} as
  \[
    c_r(L)([Y \to X]) = s_0^* (s_0)_* ([Y \to X]).
  \]
\end{defn}

\begin{lem}
\label{lem:secinvariant}
  Given a line bundle $L$  on $X \in \Cat{QPr}_k$ and any $s: X \to L$, 
  define $c_1^s(L): d\Omega_\ast(X) \to d\Omega_\ast(X)$ as $s_0^\ast 
  s_\ast ([Y \xrightarrow{f} X])$.  Then $c_1^s(L) = c_1^{s_0}(L) = 
  c_1(L)$.
\end{lem}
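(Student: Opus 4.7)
The plan is to reduce the identity to a single homotopy fiber relation on $Y \times \IP^1$. Unwinding Definitions \ref{defn:pb} and \ref{defn:c1}, for a generator $[f \colon Y \to X]$ one has
\[
c_1^s(L)([f \colon Y \to X]) = s_0^* s_* ([f \colon Y \to X]) = [Y \times^h_L X \to X]
\]
where $Y \to L$ is $s \circ f$ and $X \to L$ is $s_0$, while $c_1(L)([f \colon Y \to X]) = s_0^*(s_0)_*([f \colon Y \to X])$ is the analogous class with both structure maps equal to $s_0$. So the task reduces to producing a proper, quasi-smooth family over $\IP^1$ whose derived fibers at $0$ and $\infty$ realise these two classes.

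\textbf{The interpolating family.} On $Y \times \IP^1$ consider the line bundle
\[
\tilde L := \pr_Y^* f^* L \otimes \pr_{\IP^1}^* \sO_{\IP^1}(1)
\]
and the global section $\tilde s := \pr_Y^*(f^*s) \otimes \pr_{\IP^1}^* S$, where $S \in H^0(\IP^1, \sO_{\IP^1}(1))$ is a coordinate vanishing at $\infty$. Let $\tilde Z \hookrightarrow Y \times \IP^1$ be the derived zero locus of $\tilde s$; as the derived vanishing of a section of a line bundle on the quasi-smooth scheme $Y \times \IP^1$, it is itself quasi-smooth of pure virtual dimension $\virdim(Y)$. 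The composite
\[
\tilde Z \hookrightarrow Y \times \IP^1 \xrightarrow{f \times \id} X \times \IP^1
\]
is proper, so $\tilde Z$ defines an element of $\sR_*(X)$ in the sense of Definition \ref{defn:relations}.

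\textbf{Computing the fibers.} Trivialising $\sO_{\IP^1}(1)|_0$ by $S$ identifies $\tilde L|_{Y \times \{0\}}$ with $f^*L$ and carries $\tilde s|_{Y \times \{0\}}$ to $f^*s$. Hence $\tilde Z_0$ is the derived vanishing of $f^*s$ in $Y$. The pasting lemma for homotopy fiber products, applied to the factorisation $s \circ f \colon Y \to f^*L \to L$ together with the Cartesian square identifying the pullback of $s_0 \colon X \hookrightarrow L$ along $f^*L \to L$ with the zero section of $f^*L$, then gives $\tilde Z_0 \simeq Y \times^h_L X$ via the pair $(s\circ f, s_0)$, with the induced map to $X$ equal to $f$ on $Y$. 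Therefore $[\tilde Z_0 \to X] = c_1^s(L)([f \colon Y \to X])$. At $\infty$, $S|_\infty = 0$ forces $\tilde s|_{Y \times \{\infty\}} = 0$, and the identical argument (now with both maps into $f^*L$ being the zero section) yields $\tilde Z_\infty \simeq Y \times^h_L X$ via $(s_0 \circ f, s_0)$, so $[\tilde Z_\infty \to X] = c_1(L)([f \colon Y \to X])$.

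\textbf{Conclusion and main obstacle.} The homotopy fiber relation attached to $\tilde Z$ thus yields the desired equality on every generator, first in $d\Omega^{\naive}_*(X)$ and hence in its quotient $d\Omega_*(X)$. The principal difficulty — and the step where a naive attempt breaks down — is the choice of the twisting line bundle: sections of the untwisted product bundle $\pr_Y^* f^*L$ on $Y \times \IP^1$ are constant along the $\IP^1$-factor, so no such family can interpolate between the nonzero section $f^*s$ and the zero section. The twist by $\sO_{\IP^1}(1)$ enlarges the space of global sections precisely enough to admit a section of $\tilde L$ restricting to $f^*s$ at $0$ and to $0$ at $\infty$, which is exactly what makes the fiber-wise comparison go through.
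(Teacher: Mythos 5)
Your proof is correct and it streamlines the paper's two-case argument. You and the paper both build a derived zero locus over $Y\times\IP^1$ of a section of $(f\times\id)^*(L\boxtimes\sO_{\IP^1}(1))$ interpolating between the two fibers, and then invoke the homotopy fiber relation. The paper writes the section as $sx_0+s'x_\infty$ with \emph{both} $s,s'$ assumed non-zero, proves $c_1^s=c_1^{s'}$, and then treats $s'=s_0$ by a separate pullback argument on the total space of $L$ using the tautological section of $p^*L$. You instead take the interpolating section $f^*s\otimes S$ (the paper's $\tilde s$ with $s'=0$), observe that the \emph{derived} zero locus makes perfect sense even when the restricted section vanishes identically at one fiber -- it just acquires the expected Koszul derived structure $\sO_Y\oplus f^*L^\vee[1]$ -- and that the pasting lemma for homotopy fiber products identifies both $\tilde Z_0$ and $\tilde Z_\infty$ with the classes defining $c_1^s$ and $c_1^{s_0}$. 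This removes the paper's second step entirely, at the cost of nothing: the checks that $\tilde Z$ is quasi-smooth of pure virtual dimension $\virdim(Y)$ and proper over $X\times\IP^1$ go through regardless of whether the section is regular. So the two approaches share the same family and the same relation; your version is shorter because you exploit the fact that derived zero loci treat the zero section on equal footing, whereas the paper avoids the zero section in the interpolation and recovers it indirectly.
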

\begin{proof}
Let us first assume that $L$ has two non-zero sections $s$ and  
$s^\prime$.  Define $Z \to X \times \IP^1$ to be the derived scheme
\begin{displaymath}
  \xymatrix{Z \ar[r] \ar[d]^{\tilde f} & Y \times \IP^1 \ar[d]^{ \tilde 
      s \circ (f \times id)} \\ X \times \IP^1 \ar[r]^{s_0} & L 
    \boxtimes \sO_{\IP^1}(1)}
\end{displaymath}
where $\tilde s = sx_0 + s^\prime x_\infty$, and where $x_0, x_\infty$ 
are the Cartier divisors corresponding to $0$ and $\infty$ (they are 
both sections of $\sO_{\IP^1}(1)$).  It is clear that $Z$ is 
quasi-smooth and that $\tilde f|_{0} = c^s_1(L) \cap [Y \xrightarrow{f} 
X]$ and $\tilde f|_{\infty} = c^{s^\prime}_1(L) \cap [Y \xrightarrow{f} 
X]$, thus proving the claim.

For the case that $s = s_0$, note that if $p\colon L \to X$ is the natural projection, then $p^\ast L \to L$ has two natural sections: the canonical section $\tilde s$ and $p^\ast s^\prime$.  These sections are both non-zero sections of the same line bundle and thus by above, we have the following diagram for any $[X \xrightarrow{f} Y]$:

\begin{displaymath}
  \xymatrix{Y^{\prime \prime} \ar[r] \ar[d] & Y^{\prime} \ar[r] \ar[d] & 
    Y\times_X L \times \IP^1 \ar[d] \\  Z^\prime \ar[r] \ar[d] & Z 
    \ar[r] \ar[d] & L \times \IP^1 \ar[d] \\ X \times \IP^1 \ar[r]& L 
    \times \IP^1 \ar[r]& L\boxtimes \sO_{\IP^1} }
\end{displaymath}
The bottom morphisms are quasi-smooth, thus the quasi-smoothness of $Y$ 
implies $Y^{\prime \prime}$ is quasi-smooth. We then have
\[
  [Y^{\prime \prime}|_{0} \to X] = c_1^{s_0}(L) \cap [Y \to X]
\]
and
\[
  [Y^{\prime \prime}|_{\infty} \to X] = c_1^{s^\prime} (L) \cap [Y \to 
  X],
\]
and thus the desired bordism.
\end{proof}
\begin{rem}
  The same proof works also works for any vector bundle of rank $e \geq 
  1$.
\end{rem}

To promote $d\Omega_{*}^{\naive}$ to an oriented Borel--Moore functor 
with product we define the external product by \begin{align*}
  \times \colon \sM_*(X) \times \sM_*(X') &\longrightarrow \sM(X \times 
  X') \\
  [Y \to X] \times [Y' \to X'] &\longmapsto [Y \times _k Y' \to X 
  \times_k X'].
\end{align*}
Clearly this descends to $d\Omega_*^{\naive}$.

\begin{prop}
  \label{prop:obm}
  $d\Omega_* ^{\naive}$ is an oriented Borel--Moore functor with product.
\end{prop}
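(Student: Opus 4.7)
The data $(D1)$--$(D4)$ of Definition 2.1 have been assembled for $d\Omega_*^{\naive}$ in the paragraphs immediately preceding the proposition: proper push-forward by composition, smooth pullback as the flat special case of the quasi-smooth pullback of Definition 3.4, the first Chern class as in Definition 3.5, and the external product induced by the fiber product of $k$-schemes. The commutativity, associativity, and unit properties required by (D4) are inherited from the corresponding properties of $\times_k$, and the additivity in (D1) is immediate from the fact that $\sM_*(X \sqcup Y)^+ = \sM_*(X)^+ \oplus \sM_*(Y)^+$. The remaining task is to verify axioms (A1)--(A8), and my strategy is uniform: after unwinding definitions, each axiom reduces to a standard property of homotopy fiber products of derived schemes (associativity, base change, compatibility with external product), combined with the on-the-nose functoriality of proper push-forward.

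For (A1), two composable smooth pullbacks applied to a generator $[W \to Z]$ both represent, via associativity of the homotopy fiber product, the iterated product $[W \times^h_Z X \to X]$. For (A2), since $g$ is smooth and hence flat, the Cartesian square in question is in fact homotopy Cartesian; both $g^* f_*[T \to X]$ and $f'_* g'^*[T \to X]$ then evaluate to $[T \times^h_Z Y \to Y]$ by the universal property of the homotopy fiber product. Axioms (A3) and (A4) follow by expanding $c_1(L) = s_0^* (s_0)_*$ and observing that the pullback of the zero section of $L$ along $f$ is the zero section of $f^*L$, so that the relevant square is (homotopy) Cartesian; then (A2) (resp.\ (A1)) applies.

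For (A5), invariance under isomorphism is immediate, since an isomorphism $L \cong M$ of line bundles carries the zero section of $L$ to that of $M$, and $c_1$ depends only on the zero section. For commutativity, applying (A2) twice rewrites $c_1(L) c_1(M)([Y \to X])$ as the class of the iterated homotopy fiber product of $Y$ with the two zero sections of $L$ and $M$ simultaneously over $X$, which is manifestly symmetric in $L$ and $M$; Lemma 3.7 provides additional flexibility in choosing convenient sections should it be needed.

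Finally, (A6) is a direct consequence of the definitions of proper push-forward and external product, (A7) follows from the standard commutation of homotopy fiber products with products over $k$, and (A8) reduces, after expanding $c_1$, to the same distributivity, together with the identification of the zero section of $p_1^*L$ with the pullback of the zero section of $L$ along $p_1\colon X \times X' \to X$. The most delicate step is the commutativity part of (A5), where one has to track a double composition of push-forwards and pullbacks and recognize the outcome as a symmetric iterated homotopy fiber product; the remaining verifications are essentially bookkeeping via base change.
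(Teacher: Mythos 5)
Your proof is correct and takes the same route as the paper, which merely observes that the verification of (A1)--(A8) is a routine check via pasting of homotopy Cartesian squares and writes out only (A2) as an illustration; you fill in the remaining axioms using precisely that pasting principle. One small imprecision: in reducing (A3) and (A4) to "(A2) applies," note that the zero section $s_0\colon X \to L$ is a quasi-smooth closed embedding rather than a smooth morphism, so axiom (A2) as literally stated does not cover it---what your computation actually invokes is the analogous base-change identity for the quasi-smooth pull-back of Definition~\ref{defn:pb}, which holds by the very same pasting argument for homotopy fiber products.
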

\begin{proof}
  We have defined the projective push-forward, smooth pull-back, first 
  Chern classes and the external product. The proof then is a long but 
  simple check of the axioms using properties of homotopy cartesian 
  squares. For instance, the axiom (A2) follows from the fact that given 
  a diagram
  \[
    \xymatrix{
      A' \ar[r] \ar[d] & A \ar[d]\\
      W \ar[r]^{g'} \ar[d]_{f'} & X \ar[d]^{f} \\
      Y \ar[r]_{g} & Z
    }
  \]
  where the two inner squares are homotopy cartesian, then the outer is 
  also homotopy cartesian.
\end{proof}

With these definitions, we can prove that $d\Omega_*^{\naive}$ partially satisfies the 
properties of an oriented Borel--Moore functor of geometric type.

\begin{prop}
  \label{prop:sect}
  $d\Omega_* ^{\naive}$ satisfies the axiom (Sect).
\end{prop}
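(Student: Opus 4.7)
The plan is to unwind the definition of $c_1(L)$ using Lemma \ref{lem:secinvariant}, and then identify the resulting homotopy fiber product with the classical zero locus $Z$ via transversality. I would proceed as follows.

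First, by Definition \ref{defn:c1}, applied to the cycle $[X \xrightarrow{\id} X] = 1_X$, we have
\[
  c_1(L)(1_X) = s_0^{*} (s_0)_{*} [X \xrightarrow{\id} X].
\]
The key step is to replace the zero section in the push-forward by the given section $s$ transverse to zero, which is permitted by Lemma \ref{lem:secinvariant}: the lemma says that the operator $s_0^{*} s_{*}$ agrees with $s_0^{*} (s_0)_{*} = c_1(L)$ as operators on $d\Omega_*^{\naive}(X)$. Thus
\[
  c_1(L)(1_X) = s_0^{*} s_{*} [X \xrightarrow{\id} X] = s_0^{*} [X \xrightarrow{s} L].
\]

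Next, by Definition \ref{defn:pb}, the quasi-smooth pull-back along $s_0$ yields
\[
  s_0^{*} [X \xrightarrow{s} L] = [X \times^{h}_{L} X \longrightarrow X],
\]
where the homotopy fiber product is formed over $L$ using $s_0$ on one factor and $s$ on the other. The key geometric input now is that $s$ is transverse to the zero section. Transversality guarantees Tor-independence of the two maps $X \rightrightarrows L$ (indeed, $s$ and $s_0$ share the same underlying smooth scheme, and the scheme-theoretic intersection is the smooth zero locus $Z$). Consequently the homotopy fiber product coincides with the classical fiber product, which by definition of $Z$ is canonically identified with $Z$ via the closed embedding $i \colon Z \hookrightarrow X$. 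Therefore
\[
  [X \times^{h}_{L} X \to X] = [Z \xrightarrow{i} X] = i_{*}(1_Z),
\]
which is the desired identity.

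The only potentially non-formal step is the identification of $X \times^{h}_{L} X$ with $Z$; this is the place where the transversality hypothesis must be used, and it is a standard fact in derived algebraic geometry that a Tor-independent (classically) smooth intersection has trivial derived structure, so the derived and classical fiber products coincide. Everything else is a direct unwinding of definitions combined with Lemma \ref{lem:secinvariant}.
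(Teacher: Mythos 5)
Your proposal is correct and follows the same route as the paper: apply Lemma \ref{lem:secinvariant} to replace $(s_0)_*$ by $s_*$ in the definition of $c_1(L)$, then use transversality (Tor-independence plus smoothness of the intersection, per the paper's convention) to identify the homotopy fiber product $X \times^h_L X$ with the classical zero locus $Z$, yielding $c_1(L)(1_X) = i_*(1_Z)$. The paper writes the pull-back as an explicit Cartesian square, but that is only a notational difference from your homotopy-fiber-product formulation.
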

\begin{proof}
  Let $X$ be a smooth scheme, $L$ a line bundle on $X$ and $s \colon X 
  \to L$ a section which is transverse to the zero-section. Since the 
  first Chern class operator is independent of the choice of section, we 
  can take $c_1(L)(1_X)=0^*s_*(1_X)$, where $0 \colon X \to L$ is the 
  zero-section. Since the section is assumed to be transverse to zero, 
  the first Chern class operator is then given by the Cartesian square
  \[
    \xymatrix{
      Z \ar[r] \ar[d]_{\id} & X \ar[d]^{\id}\\
      Z \ar[r] \ar[d]_{i} & X \ar[d]^{s} \\
      X \ar[r]_{0} & L
    }
  \]
  Here $Z$ is the zero-set of $s$. We now have
  \begin{align*}
    c_1(L)(1_X) &= 0^*s_*(1_X) = 0^*([X \overset{\id}{\longrightarrow} X 
    \overset{s}{\longrightarrow} L]) \\
    &= [Z \overset{\id}{\longrightarrow} Z \overset{i}{\longrightarrow} 
    X] = i_*([Z \overset{\id}{\longrightarrow} Z])
  \end{align*}
\end{proof}

\begin{rem}
  In fact, replacing $[X \to X]$ with $[Y \to X]$ in the proof above shows that $d\Omega_{*}^{\naive}$ satisfies the 
  axiom (Loc) of Remark \ref{rem:loc}.
\end{rem}

\begin{rem}
  Using the homotopy zero-set Proposition \ref{prop:sect} is even true without 
  the transverse to zero assumption.
\end{rem}

This is as far as one can get by using the homotopy fiber relation. It 
seems impossible to show further properties of the first Chern class 
operator of Definition \ref{defn:c1}, e.g., that it satisfies a formal 
group law and the axiom (Dim). To go any further, one must artificially 
impose a formal group law.  This requires that we first know $c_1$ acts 
nilpotently.

In the following propositions, following the arguments of \cite{lp}, we 
will show that it is enough to show nilpotence of $c_1$ for globally 
generated bundles. Once this is shown, it then is legal to impose the 
formal group law for globally generated bundles. We are then left to 
prove that the axioms (FGL) and (Dim) hold for all line bundles.

\begin{prop}
  \label{prop:nilp}
  Let $X$ be a smooth scheme, $L_1, \, \dots, L_r$ globally generated 
  bundles with $\dim(X) < r$. Then
  \[
    \prod_{i=1}^{r}c_1(L_i)=0
  \]
\end{prop}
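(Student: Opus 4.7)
The plan is to use the section-invariance of the first Chern class (Lemma \ref{lem:secinvariant}) to compute an iterated Chern class as the class of a derived complete intersection, and then exploit global generation to arrange that this derived intersection is empty for purely dimensional reasons.

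More precisely, for any section $s_i$ of $L_i$ and any generator $[f\colon Y \to X]$, Lemma \ref{lem:secinvariant} gives
\[
  c_1(L_i)([f\colon Y \to X]) = s_0^{*}(s_i)_{*}[f\colon Y \to X] = [Z_i(f) \to X],
\]
where $Z_i(f) := Y \times^{h}_{f^{*}L_i} Y$ is the derived zero-scheme of the pulled-back section $f^{*}s_i$. Iterating this description, the composition
\[
  c_1(L_1) \circ \cdots \circ c_1(L_r)([f\colon Y \to X])
\]
is represented by the class $[Z(f) \to X]$, where $Z(f)$ is the iterated derived zero-scheme of $f^{*}s_1,\dots,f^{*}s_r$, naturally identified with the homotopy fibre product $Y \times^{h}_{X}\bigl(Z(s_1) \times^{h}_{X} \cdots \times^{h}_{X} Z(s_r)\bigr)$. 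The classical truncation of $Z(f)$ is $f^{-1}\bigl(Z(s_1) \cap \cdots \cap Z(s_r)\bigr)$; if this classical intersection is empty, then $Z(f)$ is empty as a derived scheme, and hence $[Z(f) \to X] = 0$.

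So the proof reduces to the geometric lemma: for globally generated line bundles $L_1,\dots,L_r$ on a smooth $X$ with $r > \dim X$, one can choose global sections $s_i$ of $L_i$ with $\bigcap_i Z(s_i) = \emptyset$. Writing $V_i := H^0(X,L_i)$, global generation provides morphisms $\phi_i\colon X \to \IP(V_i^{\vee})$, and a section $s_i$ corresponds to a hyperplane $H_i \subset \IP(V_i^{\vee})$ with $Z(s_i) = \phi_i^{-1}(H_i)$. Consider the combined morphism $\phi \colon X \to \prod_i \IP(V_i^{\vee})$, whose image has dimension $\leq \dim X < r$. Since $H_1 \times \cdots \times H_r$ has codimension $r$ in $\prod_i \IP(V_i^{\vee})$, a generic choice of hyperplanes $H_i$ (over an infinite field, via a standard Bertini/dimension count; over a finite field, after passing to a finite extension and using that emptiness of a scheme descends) ensures $\phi(X) \cap (H_1 \times \cdots \times H_r) = \emptyset$, hence $\bigcap_i Z(s_i) = \emptyset$.

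The main obstacle is this last geometric step: producing the sections with empty common vanishing locus uniformly in the characteristic, which is the only place the hypothesis $r > \dim X$ is used. The remaining content is formal: the identification of iterated $c_1$ with the derived intersection uses only the section-independence of $c_1$ (Lemma \ref{lem:secinvariant}) and the fact that a homotopy fibre product with empty classical truncation is empty as a derived scheme, so that its class vanishes in $d\Omega^{\naive}_{*}(X)$.
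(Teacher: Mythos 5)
Your proof is correct in substance and takes essentially the same route as the paper's: express the iterated first Chern class as intersection with the (derived) zero locus of global sections, then use a linear-system/dimension-count argument to choose sections of the globally generated $L_i$ whose common vanishing locus is empty once $r>\dim X$. The paper's own proof is extremely terse — it simply cites Proposition \ref{prop:sect} and says the intersection ``set-theoretically can be arranged to be the empty set'' — so what you have written is essentially the expanded version of that sentence. Two points where your write-up is actually more careful than the paper: you handle an arbitrary generator $[f\colon Y\to X]$ rather than just $1_X$, and by passing through derived zero-schemes via Lemma \ref{lem:secinvariant} you sidestep any need to arrange transversality at each step (the paper's reference to (Sect) implicitly requires the successive sections to be transverse to zero).

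One small gap: the parenthetical remark about finite fields is not quite right as stated. Passing to a finite extension $k'/k$ produces sections over $k'$ with empty common vanishing, and ``emptiness descends'' only tells you that a $k$-scheme is empty iff its base change is — it does not produce sections defined over $k$. To actually close this case one should push the resulting identity in $d\Omega^{\naive}_*(X_{k'})$ forward along the finite \'etale map $X_{k'}\to X$, which yields $[k':k]\cdot\prod_i c_1(L_i)=0$, and then repeat with a second extension of coprime degree to cancel the multiplicity. Since Section \ref{sect:dOm} is stated over an arbitrary field, this step is needed (and the paper is equally silent about it), but it is a routine fix rather than a conceptual obstacle.
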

\begin{proof}
  By Proposition \ref{prop:sect}, $c_1(L_1) \circ c_1(L_2) \circ \ldots 
  \circ c_1(L_r)(1_X)$ set-theoretically can be arranged to be the empty 
  set.  Any morphism factoring through the empty set is zero.
\end{proof}

We now impose the formal group law on these globally generated line 
bundles. Recall $\IL_*$ denotes the Lazard ring, graded such that the 
universal formal group law has total degree -1.  Define a new oriented 
Borel--Moore functor by
\[
  X \longmapsto \IL_{*} \otimes_{\mathbb{Z}} d\Omega_*^{\naive}(X).
\]
Set for any smooth scheme $X$
\[
  \sR^{\FGL}_{*}(X) \subset \IL_{*} \otimes_{\mathbb{Z}} d\Omega^{\naive}_{*}(X)
\]
to be the subset of elements of the form
\[
  F_{\IL}(c_1(L),c_1(M))(1_X) - c_1(L \otimes M)(1_X)
\]
for globally generated line bundles $L,M$ on a smooth scheme $X$. Here $F_{\IL}$ is the 
universal formal group law.

Recall from \cite[Section 2.1.5]{lm} that given an oriented Borel--Moore 
functor with product $A_*$ and a subset of homogeneous elements $\sR_*(X) 
\subset A_*(X)$ compatible with the external product it is possible to 
define the quotient Borel--Moore homology functor $A_* / \sR_*$.

\begin{defn}
  \label{defn:dCob}
  Define \emph{derived algebraic pre-bordism} as
  \[
    d\Omega^{\Pre}_{*} = d\Omega^{\naive}_{*} / <\sR^{\FGL}_{*}>.
  \]
\end{defn}

Using the notation for the formal group law set in the previous section, we now have to show that (Dim) and (FGL) hold for all line bundles.  The proofs proceed exactly as in \cite{lp}.

\begin{lem}
  \label{lem:c1L}
  Let $X \in \Cat{Sm}_k$, $L$ a line bundle on $X$ and $M$ a globally 
  generated line bundle such that $L \otimes M$ is globally generated.  
  Then
  \[
    c_1(L) = F^{-}_{\IL}(c_1(L \otimes M), c_1(M)).
  \]
\end{lem}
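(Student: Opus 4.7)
The plan is to reformulate the identity as an instance of the formal group law (which is not directly imposed here, since $L$ is not assumed globally generated) and then fall back on a geometric bordism to supply the missing relation. First, note that the definition $F(u,v) = F^-(u,\chi(v))$ of the difference group law, together with $F(w,\chi(w)) = 0$, yields $F^-(F(u,v), v) = F(F(u,v), \chi(v)) = F(u, F(v,\chi(v))) = F(u,0) = u$ as a formal identity over $\IL$. Consequently, the claim is equivalent to the formal group law relation
\[
F_{\IL}(c_1(L), c_1(M))(1_X) = c_1(L \otimes M)(1_X)
\]
for the pair $(L, M)$. Both sides make sense as finite polynomial operators rather than merely formal power series: by Proposition \ref{prop:nilp}, applied to the globally generated bundles $M$ and $L \otimes M$ on the finite-dimensional smooth scheme $X$, the operators $c_1(M)$ and $c_1(L \otimes M)$ are nilpotent on $d\Omega^{\Pre}_*(X)$, so every formal power series in them truncates.

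The substantive content is thus to show that the FGL relation holds for $(L, M)$ even though only $M$ and $L \otimes M$ are globally generated. Following \cite{lp}, the plan is to stage a bordism using transverse sections. Since $M$ and $L \otimes M$ are globally generated, after a moving argument we may pick general sections $s_M$ and $s_{L \otimes M}$ whose zero loci $D_M$ and $D_{L \otimes M}$ are smooth and transverse, so that $D_M + D_{L \otimes M}$ is a strict normal crossing divisor. By the (Sect) axiom (Proposition \ref{prop:sect}), these data represent $c_1(M)(1_X)$ and $c_1(L \otimes M)(1_X)$ respectively, while the rational section $s_{L \otimes M} \otimes s_M^{-1}$ of $L$ has formal divisor $D_{L \otimes M} - D_M$, which should informally compute $c_1(L)(1_X)$ as a formal-group difference of the two classes.

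The main work is to promote this formal difference of divisors to an equality in $d\Omega^{\Pre}_*(X)$. The plan is to construct an explicit homotopy fiber relation, in the sense of Definition \ref{defn:relations}, on $X \times \IP^1$ whose two endpoints recover $c_1(L)(1_X)$ and $F^-(c_1(L \otimes M), c_1(M))(1_X)$ respectively: one chooses a line bundle on $X \times \IP^1$ built from $p_X^*(L \otimes M)$ twisted by an appropriate power of $p_{\IP^1}^* \sO_{\IP^1}(1)$, together with a two-parameter family of sections assembled from $s_{L \otimes M}$, $s_M$ and the coordinates on $\IP^1$. The imposed FGL for globally generated pairs (applied to tensor products such as $L \otimes M^k$ appearing along the family) together with the section-invariance of the Chern class operator established in Lemma \ref{lem:secinvariant} then allow each fibre to be rewritten in the desired form. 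The hard part will be engineering this deformation correctly: the $\IP^1$-family must be quasi-smooth so that Definition \ref{defn:relations} applies, and its restrictions at $0$ and $\infty$ must land exactly on the two operator expressions being identified, with all intermediate reductions using only the imposed FGL for globally generated pairs.
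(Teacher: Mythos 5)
Your opening observation correctly identifies what the power series identity does and does not establish: combined with the nilpotence from Proposition~\ref{prop:nilp}, it only converts the lemma into the relation $F_{\IL}(c_1(L),c_1(M))(1_X)=c_1(L\otimes M)(1_X)$ for the pair $(L,M)$, and this pair is \emph{not} among those for which the relation was imposed in $\sR^{\FGL}_*$, since $L$ need not be globally generated. The paper's proof of the lemma consists solely of recording the identity $F^-(F(u,v),F(0,v))=F^-(u,0)=u$, deferring the remainder to \cite{lp}; so you have correctly isolated that the displayed identity on its own does not close the argument.

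The fix you propose, however, diverges from the paper's approach and is not carried through. Rather than a formal manipulation of imposed relations, you want to construct an explicit quasi-smooth $\IP^1$-degeneration whose fibres over $0$ and $\infty$ realize $c_1(L)(1_X)$ and $F^-(c_1(L\otimes M),c_1(M))(1_X)$. There are two concrete problems. First, the Bertini-type moving argument you invoke to get smooth transverse zero loci from general sections of $M$ and $L\otimes M$ is characteristic-dependent, while Section~\ref{sect:dOm} is set up over an arbitrary base field precisely to avoid such transversality inputs. Second, the acknowledgments state that the authors tried and failed to exhibit the formal group law by geometric bordisms, and that the passage to $\sR^{\FGL}_*$ was adopted precisely to avoid this; your plan asks for exactly the kind of $\IP^1$-bordism whose apparent intractability motivated the axiomatic shortcut in the first place. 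You flag this yourself: the final paragraph concedes that ``engineering this deformation correctly'' is the hard part, and it is left open. In summary, the proposal correctly diagnoses that the lemma requires more than the bare power series identity, but the replacement you sketch is neither the route the paper takes (a formal reduction, suppressed) nor a complete argument in its own right.
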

\begin{proof}
  Follows from the power series identity
  \[
    F^{-}(F(u,v),F(0,v))=F^{-}(u,0)=u
  \]
\end{proof}

\begin{prop}
  \label{prop:dim}
  $d\Omega_*^{\Pre}$ satisfies the axiom Dim.
\end{prop}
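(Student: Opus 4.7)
The plan is to bootstrap from the nilpotence for globally generated line bundles (Proposition \ref{prop:nilp}) to nilpotence for arbitrary line bundles, using Lemma \ref{lem:c1L} to rewrite each $c_1(L_i)$ formally as a power series in $c_1$'s of globally generated bundles.

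First I would use the quasi-projectivity of $X$ to choose, for every line bundle $L_i$ on $X$, a very ample line bundle $M_i$ such that both $M_i$ and $L_i \otimes M_i$ are globally generated (for instance by taking $M_i$ to be a sufficiently high power of a very ample line bundle on $X$). Lemma \ref{lem:c1L} then gives, in $d\Omega^{\Pre}_*(X)$,
\[
  c_1(L_i) \;=\; F^{-}_{\IL}\bigl(c_1(L_i \otimes M_i),\,c_1(M_i)\bigr),
\]
and since $F^{-}(u,v)$ has no constant term as a power series in $u,v$, the right-hand side is a formal power series in the operators $c_1(L_i \otimes M_i)$ and $c_1(M_i)$ all of whose monomials have total degree $\geq 1$.

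Next I would substitute these expressions into the composition $c_1(L_1)\circ\cdots\circ c_1(L_r)(1_X)$ and expand. Using axiom (A5) (commutativity of Chern class operators), the result is an $\IL_*$-linear combination of compositions of the form $c_1(N_{j_1})\circ \cdots \circ c_1(N_{j_s})(1_X)$, where each $N_{j_k}$ is either some $M_i$ or some $L_i \otimes M_i$ — in particular globally generated — and where the total number of factors $s$ is at least $r$, since each $c_1(L_i)$ contributes at least one factor to every monomial. Because $r > \dim(X)$, Proposition \ref{prop:nilp} applies to each such monomial and shows it is zero.

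The only subtle point is that the substitution produces an a priori infinite sum; this is the step to be careful with. However each individual monomial appearing in the expansion is a composition of at least $r > \dim(X)$ first Chern classes of globally generated bundles, hence vanishes by Proposition \ref{prop:nilp}. Thus the formal power series collapses term by term to $0$ before any convergence question arises, and we conclude $c_1(L_1)\circ \cdots \circ c_1(L_r)(1_X) = 0$ in $d\Omega^{\Pre}_*(X)$, which is exactly the axiom (Dim). The main obstacle I anticipate is keeping track of the bookkeeping of the substitution and verifying that commutativity of Chern classes legitimately allows one to collect factors coming from different $L_i$'s into a single monomial of globally generated Chern classes; but this is a purely formal manipulation once (A5) is in hand.
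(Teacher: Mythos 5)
Your argument is correct and follows the same route as the paper: apply Lemma~\ref{lem:c1L} to rewrite each $c_1(L_i)$ as a power series in first Chern classes of globally generated bundles, then observe that every monomial in the expansion of the $r$-fold composition has total degree $\geq r > \dim(X)$ (since $F^-_{\IL}$ has no constant term) and so vanishes by Proposition~\ref{prop:nilp}. The paper compresses this bookkeeping into a citation to \cite[Lemma 9.3]{lp}, whereas you have spelled it out explicitly, including the observation that the a priori infinite expansion is in fact a finite (indeed zero) sum because each monomial vanishes individually; this is a faithful unpacking of the cited argument rather than a different proof.
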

\begin{proof}
  Let $X$ be a quasi-projective scheme, $L_1, \, \dots L_r$ a sequence 
  of line bundles with $r > \dim(X)$. By the previous lemma, we have
  \[
    c_1(L_i) = F^{-}_{\IL}(c_1(L_i \otimes M), c_1(M)).
  \]
  where $M$ is globally generated and chosen such that $L_i \otimes M$ 
  is globally generated. The proof then is the same as  \cite[Lemma 
  9.3]{lp}.
\end{proof}

\begin{prop}
  \label{prop:fgl}
  $d\Omega_*^{\Pre}$ satisfies the axiom (FGL).
\end{prop}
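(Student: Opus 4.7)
The plan is to reduce the (FGL) identity for arbitrary line bundles $L, M$ on a smooth quasi-projective scheme $X$ to the case of globally generated line bundles, where (FGL) holds in $d\Omega^{\Pre}$ by construction of $\sR^{\FGL}_{*}$. Since $X$ is quasi-projective, I can choose an ample line bundle $N$ on $X$ such that each of $N$, $L \otimes N$, $M \otimes N$, $N^{\otimes 2}$, and $L \otimes M \otimes N^{\otimes 2}$ is globally generated; such an $N$ is obtained by taking a sufficiently high tensor power of any fixed ample line bundle.

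Next, set $u = c_1(L \otimes N)$, $v = c_1(M \otimes N)$ and $w = c_1(N)$, Chern classes of globally generated line bundles. Applying Lemma \ref{lem:c1L} three times (using auxiliary bundle $N$ in the first two instances and $N^{\otimes 2}$ in the third) yields
\[
c_1(L) = F^{-}_{\IL}(u, w), \qquad c_1(M) = F^{-}_{\IL}(v, w),
\]
\[
c_1(L \otimes M) = F^{-}_{\IL}\bigl(c_1(L \otimes M \otimes N^{\otimes 2}), c_1(N^{\otimes 2})\bigr).
\]
Since $(L \otimes N) \otimes (M \otimes N) \cong L \otimes M \otimes N^{\otimes 2}$ and $N \otimes N \cong N^{\otimes 2}$ involve only globally generated line bundles, the formal group law already imposed for globally generated bundles gives
\[
c_1(L \otimes M \otimes N^{\otimes 2})(1_X) = F_{\IL}(u, v)(1_X), \qquad c_1(N^{\otimes 2})(1_X) = F_{\IL}(w, w)(1_X).
\]

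Substituting these expressions and using that the $c_1$-operators act nilpotently on $1_X$ (Proposition \ref{prop:nilp}) so that all formal power series in them make sense on $1_X$, the desired equality $F_{\IL}(c_1(L), c_1(M))(1_X) = c_1(L \otimes M)(1_X)$ reduces to the formal power series identity
\[
F_{\IL}\bigl(F^{-}_{\IL}(u, w), F^{-}_{\IL}(v, w)\bigr) = F^{-}_{\IL}\bigl(F_{\IL}(u, v), F_{\IL}(w, w)\bigr)
\]
in $\IL[[u, v, w]]$, schematically the identity $(u \ominus w) \oplus (v \ominus w) = (u \oplus v) \ominus (w \oplus w)$, which is a direct consequence of the commutativity and associativity of $F_{\IL}$. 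The main (and essentially only) obstacle is this formal power series bookkeeping; since it concerns only the universal formal group law, the argument is identical to the one carried out in the proof of \cite[Lemma 9.4]{lp} and can be invoked directly once the reduction to globally generated bundles above is in place.
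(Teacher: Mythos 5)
Your proposal is correct and takes essentially the same route as the paper: reduce to globally generated line bundles via Lemma \ref{lem:c1L} and then invoke the interchange identity for $F$ and $F^-$ (the paper states it as $F^-(F(u_1,v_1),F(u_2,v_2)) = F(F^-(u_1,u_2),F^-(v_1,v_2))$, of which yours is the specialization $u_2 = v_2 = w$), referring to \cite{lp} for the details. Your version is slightly more explicit about the choice of auxiliary bundle (a single ample $N$ and its square rather than the paper's $N_1,N_2$) and about why nilpotence makes the power series well-defined, but it is the same argument.
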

\begin{proof}
  Let $X$ be a smooth scheme, and let $L,M$ be line bundles on $X$.  
  Choose globally generated bundles $N_1,N_2$ such that $L \otimes N_1$ 
  and $L \otimes N_2$ are globally generated. The proof then follows 
  from the formal power series identity
  \[
    F^{-} \left( F(u_1,v_1),F(u_2,v_2) \right) = F \left( 
      F^{-}(u_1,u_2),F^{-}(v_1,v_2) \right)
  \]
  combined with Lemma \ref{lem:c1L} and the formal group law for globally 
  generated bundles.
\end{proof}

Lastly, we ensure strict normal crossing divisors in a smooth 
scheme have the correct fundamental class.  Set for any smooth scheme $X$ and any strict normal crossing divisor $E$ 
in $X$
\[
  \sR^{\SNC}_{*}(E) \subset  d\Omega^{\Pre}_{*}(E)
\]
to be the subset of elements of the form
\[
  \pi_E^*([1]) - (\zeta_{E})_\ast[E \to |E|].
\]
Here $\pi_E \colon E \to \pt$ is the structure morphism of the support 
of $E$, $\zeta_{E}: |E| \to E$ is the natural closed embedding, and 
$[E \to |E|]$ is the class defined in \eqref{eq:FundSNC}.
\begin{defn}
  Define \emph{derived algebraic bordism} as
  \[
    d\Omega_* = d\Omega_*^{\Pre} / <\sR_*^{SNC}>.
  \]
\end{defn}

\begin{rem}
  Instead of taking the axiomatic approach to obtaining a formal group 
  law it should be possible to impose some form of the double point 
  relations of \cite{lp} to obtain the formal group law and the correct 
  fundamental class for strict normal crossing divisors in one step.
\end{rem}

We are now ready to prove the universality of derived algebraic bordism.

\begin{thm}
  \label{thm:universal}
  $d\Omega_{*}$ is the universal oriented Borel--Moore functor with 
  quasi-smooth pullbacks of geometric type.
\end{thm}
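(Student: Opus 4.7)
The plan is to verify both halves of the universal property separately: first that $d\Omega_*$ itself is an oriented Borel--Moore functor with quasi-smooth pullbacks of geometric type, and second that for any other such functor $A_*$ there is a unique natural transformation $\vartheta_A \colon d\Omega_* \to A_*$.

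For the first half, most of the work has already been done in the preceding propositions. The functor $d\Omega_*^{\naive}$ was shown to be an oriented Borel--Moore functor with product (Proposition \ref{prop:obm}), Propositions \ref{prop:dim} and \ref{prop:fgl} together with axiom (Sect) imposed via Proposition \ref{prop:sect} give geometric type on the quotient $d\Omega_*^{\Pre}$, and the further quotient by $\sR^{\SNC}_*$ is tailored to enforce (QS5). What remains is to check the quasi-smooth pullback axioms (QS1)--(QS4) at the level of $d\Omega_*$. Axiom (QS1) is Lemma \ref{lem:secinvariant} together with Definition \ref{defn:c1}; axioms (QS2) and (QS4) follow from the fact that a composition of homotopy Cartesian squares is homotopy Cartesian, exactly as in the proof of (A2); and (QS3) follows from the associativity of homotopy fibre products, proper base change for proper morphisms being automatic from the definition of $g^*$ on generators. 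One must verify that these operations, which are defined on $\sM_*^+$, descend through each of the two quotients by $\sR^{\FGL}_*$ and $\sR^{\SNC}_*$; this is straightforward because both relations are preserved by proper pushforward (functoriality of pushforward on the generators) and by quasi-smooth pullback (since pulling back globally generated bundles or strict normal crossing divisors in smooth schemes along a quasi-smooth morphism yields the required compatibility after applying the (FGL) and (SNC) relations in the target).

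For universality, let $A_*$ be another oriented Borel--Moore functor with quasi-smooth pullbacks of geometric type, and define
\[
  \vartheta_A \colon d\Omega_*(X) \longrightarrow A_*(X), \qquad [f \colon Y \to X] \longmapsto f_*\bigl(\pi_Y^*[1]\bigr),
\]
where $\pi_Y \colon Y \to \pt$ is the structure morphism and $\pi_Y^*$ is the quasi-smooth pullback in $A_*$. This formula is forced, since in $d\Omega_*$ one has $[f \colon Y \to X] = f_*\pi_Y^*[1]$ by the very definition of the pullback in Definition \ref{defn:pb}, so uniqueness will be automatic once we show well-definedness. Well-definedness on the homotopy fibre relation of Definition \ref{defn:relations} is the content of axiom (QS3) applied to the two Cartesian squares extracting $Y_0$ and $Y_\infty$ from $Y$, combined with the fact that $0, \infty \colon \pt \to \IP^1$ are bordant in $A_*$ via the projection $\IP^1 \to \pt$ (this uses the axiom (Dim), which holds in $A_*$ by assumption). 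Well-definedness modulo $\sR^{\FGL}_*$ is immediate from axiom (FGL) in $A_*$, and well-definedness modulo $\sR^{\SNC}_*$ is immediate from axiom (QS5) in $A_*$.

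Finally one must check that $\vartheta_A$ is a morphism of oriented Borel--Moore functors with quasi-smooth pullbacks: compatibility with proper pushforward is tautological from the formula, compatibility with external product follows from (QS4) in $A_*$ applied to the pair of structure morphisms $\pi_Y, \pi_{Y'}$, compatibility with quasi-smooth pullback follows from (QS2) combined with the base-change axiom (QS3) in $A_*$, and compatibility with first Chern classes follows from (QS1) in $A_*$. The step requiring the most care will be the verification that the homotopy fibre relation is killed by $\vartheta_A$: since $A_*$ is not assumed to have a general $\IA^1$-homotopy or localisation property, one must cleanly decompose the relation $[Y_0 \to X] - [Y_\infty \to X]$ as $g_*(\iota_0^* - \iota_\infty^*)\pi_Y^*[1]$ and argue, using the projective bundle calculation for $\IP^1$ that is encoded in the geometric type axioms together with (QS3), that $\iota_0^*\pi_{\IP^1}^* = \iota_\infty^*\pi_{\IP^1}^* = \id$ on the point. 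This is the one place where all of the structure of $A_*$ has to be used simultaneously, and is the main obstacle.
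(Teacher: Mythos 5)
Your overall strategy matches the paper's exactly: cite Propositions \ref{prop:obm}, \ref{prop:sect}, \ref{prop:dim}, \ref{prop:fgl} to get that $d\Omega_*$ has the requisite structure, then define $\vartheta_A([Y\xrightarrow{f}X])=f_*\pi_Y^*[1]$ and verify well-definedness and naturality. The paper's proof is much terser: it simply notes that (QS5) holds ``by construction'' and then says the universality argument is ``the same as \cite[Proposition 1.10]{quillen}'' and that compatibility is ``straightforward''; you have usefully unpacked the points where that compatibility has to be checked.

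The one place where your sketch goes off target is exactly the place you flag as the main obstacle. After applying (QS2) and (QS3) one reduces to showing $i_0^*(v)=i_\infty^*(v)$ for $v=g_*\pi_Y^*[1]\in A_*(X\times\IP^1)$, where $i_0,i_\infty\colon X\hookrightarrow X\times\IP^1$ are the two fibres. What you propose to prove instead, namely $\iota_0^*\pi_{\IP^1}^*=\iota_\infty^*\pi_{\IP^1}^*=\id$, is an immediate consequence of (QS2) (functoriality of pullback along $\pt\xrightarrow{\iota_t}\IP^1\xrightarrow{\pi}\pt$) and does not bear on the question: it only controls the value of $i_t^*$ on classes pulled back along the projection, whereas $v$ is generally not such a class. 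In Quillen's Proposition 1.10 the analogous step works because homotopy invariance gives that $p^*\colon h(X)\to h(X\times\IR)$ is an isomorphism, hence $i_t^*=(p^*)^{-1}$ is independent of $t$; no analogue of that $\IP^1$-invariance is among the axioms of Definition \ref{defn:BMFqs} or \ref{defn:BMFgt}. So either some additional consequence of (QS1)--(QS5), (Sect), (Dim), (FGL) on classes of the specific shape $g_*\pi_Y^*[1]$ has to be invoked, or one must acknowledge a hidden homotopy/localisation input. The paper glosses over this by referring to Quillen, and your instinct that this is the crux is correct, but the argument you sketch does not close it.
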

\begin{proof}
  By Proposition \ref{prop:obm}, \ref{prop:sect}, \ref{prop:dim} and 
  \ref{prop:fgl} $d\Omega_*$ is an oriented Borel--Moore functor of 
  geometric type and has orientations for quasi-smooth morphisms. The 
  normalization property (QS5) is clear by construction. We are left to 
  show universality.  The proof of universality is the same as 
  \cite[Proposition 1.10]{quillen}.  Let $A_*$ be another oriented 
  Borel--Moore functor of geometric type with quasi-smooth pullbacks.  
  Then we can define a natural transformation
  \[
    \vartheta_{A} \colon d\Omega_* \longrightarrow A_*
  \]
  by
  \[
    \vartheta_{A} ([Y \overset{f}{\longrightarrow} X]) = f_*\pi_Y^* [1].
  \]
  Here as before $\pi_Y \colon Y \to \pt$ is the structure morphism of 
  $Y$. The proof that $\vartheta$ is compatible with the structures of 
  an oriented Borel--Moore functor of geometric type is straightforward.
\end{proof}

\begin{rem}
  Further examples of oriented Borel--Moore functors with quasi-smooth 
  pullbacks are discussed in \cite{ls}. There, using the quasi-smooth 
  pull-backs defined by \cite{manolache}, Chow homology is extended to 
  derived schemes. This theory has the additive formal group law. As 
  another example, $G$-theory with quasi-smooth pull-backs is introduced.  
  This carries the multiplicative formal group law.
\end{rem}

\section{Quasi-smooth pullbacks in algebraic bordism}
\label{sect:qSmPb}

We now restrict to the case that $k$ is of characteristic zero.  We 
begin by extending classical, underived bordism $\Omega_*$ to a functor 
on the category of derived quasi-projective schemes $\Cat{dQPr}_k$ with 
proper morphisms using the same generators and relations as for 
classical schemes. By abuse of notation, we will still denote this 
extended functor by $\Omega_*$.  Given a derived quasi-projective scheme 
$X$, we then have the closed embedding of the underlying scheme 
$\iota_{X} \colon t_0(X) \hookrightarrow X$. In particular, this is a 
proper map and there exists a push-forward on associated bordism groups.

\begin{lem}
  Let $X \in \Cat{dQPr}_k$. Then the inclusion of the classical part 
  $\iota_X \colon t_0(X) \hookrightarrow X$ induces an isomorphism
  \[
    \iota_{X*} \colon \Omega_*(t_0(X)) \longrightarrow \Omega_*(X).
  \]
\end{lem}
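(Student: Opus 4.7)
The plan is to observe that the extension of $\Omega_*$ to derived quasi-projective schemes, by construction, uses the same generators and relations as Levine--Morel's original theory, and these generators and relations only involve \emph{classical} smooth schemes on the ``source'' side. The universal property of the classical truncation $t_0$ will then force everything to factor through $t_0(X)$, which is exactly the statement that $\iota_{X*}$ is an isomorphism.

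More precisely, I would first examine the generators. A generator of $\Omega_*(X)$ is a proper morphism $f \colon Y \to X$ with $Y$ smooth quasi-projective; since a smooth derived scheme is automatically classical, $Y$ lies in $\Cat{Sm}_k$. For any classical scheme $Y$, a morphism $Y \to X$ into a derived scheme $X$ factors uniquely through the closed embedding $\iota_X \colon t_0(X) \hookrightarrow X$ by the universal property of the truncation. Therefore sending $[Y \to t_0(X)] \mapsto [Y \to t_0(X) \hookrightarrow X]$ is a bijection of the free abelian groups of generators, and this bijection is precisely $\iota_{X*}$ on the level of $\sM_*^+$.

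Next I would check that the relations correspond under the same bijection. Every type of relation defining $\Omega_*$ --- the naive bordism/double-point relations, the formal group law relations, and the strict normal crossing normalization relation --- is indexed by auxiliary data consisting of a \emph{classical} smooth scheme $W$ equipped with a proper map to $X \times \IP^1$ (or to $X$, together with line bundles, etc.). The exact same factorization argument applies: such data for $X$ is in natural bijection with such data for $t_0(X)$, since the smooth scheme $W$ is classical and its map to $X \times \IP^1$ factors uniquely through $t_0(X \times \IP^1) = t_0(X) \times \IP^1$. Hence the subgroups of relations in $\sM_*^+(t_0(X))$ and $\sM_*^+(X)$ match under $\iota_{X*}$.

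Passing to the quotient then gives the isomorphism $\iota_{X*} \colon \Omega_*(t_0(X)) \to \Omega_*(X)$. There is no serious obstacle here: the whole argument is essentially tautological once one notes that $\Omega_*$ is built exclusively from classical smooth schemes, so the derived structure on $X$ is invisible to it. The only point requiring mild care is to verify that the \emph{pull-back data} appearing in the relations (fibre products along $0, \infty \hookrightarrow \IP^1$, line bundles, SNC divisors) agrees on $X$ and on $t_0(X)$ --- and this is immediate because all of these are formed from classical ingredients and pulled back along maps that factor through $t_0(X)$.
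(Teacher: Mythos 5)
Your proof takes essentially the same approach as the paper: the paper's proof is a one-sentence observation that generators $f\colon Y \to X$ with $Y$ smooth all factor through $t_0(X)$, which is the heart of your argument. Your version is more thorough in that you also explicitly check that the relations (homotopy fiber, FGL, SNC) match under the bijection, which the paper leaves implicit; this added care is correct and arguably worth spelling out.
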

\begin{proof}
  This is clear, since the generators of $\Omega_*(X)$ are given by 
  morphisms $f \colon Y \to X$ with $Y$ smooth, and every such map 
  factors through the truncation $t_0(X)$.
\end{proof}

Using the above lemma, it is immediate to equip the extended functor 
$\Omega_*$ with pull-backs along smooth morphisms of derived 
quasi-projective schemes. Since the truncation of a smooth morphism $f 
\colon X \to Y$ is again smooth, we can define $f^*$ by the composition
\[
  \Omega_*(Y) \cong \Omega_*(t_0 (Y)) \overset{t_0(f)^*}{\to} 
  \Omega_*(t_0(X)) \cong \Omega_*(X).
\]

Along the same lines, we can define a first Chern class operator for a 
line bundle $L$ on a derived scheme $X$ for the extended functor 
$\Omega_*$ by the composition
\[
  \Omega_*(X) \cong \Omega_{*}(t_0(X)) \overset{c_1(\iota_X^* L)}{\to} 
  \Omega_{*-1}(t_0(X)) \cong \Omega_{*-1}(X).
\]

Almost by definition, this makes the extended functor $\Omega_*$ to an 
oriented Borel--Moore funtor of geometric type. The same proof as in the 
discrete case shows that it is in fact the universal such functor.

\begin{cor}
  \label{cor:OmToDOm}
  There is a canonical classifying morphism
  \[
    \vartheta_{d\Omega} \colon \Omega_{*} \longrightarrow d\Omega_{*}
  \]
\end{cor}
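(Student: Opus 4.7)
The plan is to apply the universal property of $\Omega_*$ as the universal oriented Borel--Moore functor of geometric type on $\Cat{dQPr}_k$, which was just established in the paragraph preceding the corollary. Since Section \ref{sect:dOm} verifies through Propositions \ref{prop:obm}, \ref{prop:sect}, \ref{prop:dim} and \ref{prop:fgl} that $d\Omega_*$ is an oriented Borel--Moore functor of geometric type (forgetting the additional quasi-smooth pull-back structure), the universal property will immediately furnish the desired natural transformation $\vartheta_{d\Omega}$.

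Concretely, I would describe the map on generators. A generator of $\Omega_*(X)$ is a proper morphism $f \colon Y \to X$ with $Y$ a smooth scheme. Since every smooth scheme is in particular a quasi-smooth derived scheme, the class $[Y \to X]$ is also a valid generator of $d\Omega_*(X)$, and one sets
\[
  \vartheta_{d\Omega}\bigl([Y \xrightarrow{f} X]\bigr) = [Y \xrightarrow{f} X].
\]
Equivalently, using the construction in the proof of Theorem \ref{thm:universal}, this is $f_* \pi_Y^*[1]$, where $\pi_Y^*$ is now the smooth pull-back along $\pi_Y \colon Y \to \pt$.

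Next I would check that this map descends through the defining relations of $\Omega_*(X)$. The FGL and SNC relations are imposed by construction in $d\Omega_*$, so they are respected tautologically. The classical $\IP^1$-bordism relation is a special case of the homotopy fiber relation of Definition \ref{defn:relations}: when the total space $W \to X \times \IP^1$ has $W$ smooth, the composite $W \to \IP^1$ is flat, so the classical fibers over $0$ and $\infty$ coincide with the homotopy fibers. Compatibility with proper push-forward, smooth pull-back, external product and first Chern classes is then routine because each structure on $\Omega_*$ and on $d\Omega_*$ is given by the same formula on cycles.

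I do not foresee a genuine obstacle. The only point requiring mild care is that the extension of $\Omega_*$ from $\Cat{QPr}_k$ to $\Cat{dQPr}_k$ is actually universal; this reduces via the isomorphism $\iota_{X*} \colon \Omega_*(t_0 X) \xrightarrow{\sim} \Omega_*(X)$ of the preceding lemma to the known universality in the classical case, since every generator factors through the truncation. Once this is in place, the corollary is a formal consequence of the two universal properties being invoked against each other.
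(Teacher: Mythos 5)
Your proposal is correct and takes essentially the same route as the paper: invoke the universality of $\Omega_*$ as the universal oriented Borel--Moore functor of geometric type on $\Cat{dQPr}_k$, observe that $d\Omega_*$ is such a functor by Propositions \ref{prop:obm}, \ref{prop:sect}, \ref{prop:dim}, \ref{prop:fgl}, and deduce the classifying map $\vartheta_{d\Omega}$ given on cycles by $[Y \xrightarrow{f} X] \mapsto f_*\pi_Y^*[1]$. The extra verifications you sketch (descent through relations, compatibility with the structures) are subsumed in the universal property the paper cites, so there is no discrepancy.
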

\begin{proof}
  Since $d\Omega_{*}$ is an oriented Borel--Moore functor of geometric 
  type on $\Cat{dQPr}_k$, and since by the preceeding discussion  
  $\Omega_{*}$ is the universal such Borel--Moore functor of geometric 
  type, we obtain a natural transformation
  \[
    \vartheta_{d\Omega} \colon \Omega_{*} \longrightarrow d\Omega_{*}
  \]
  given by
  \[
    \vartheta_{d\Omega}([Y \overset{f}{\longrightarrow} X]) = f_* \pi_Y 
    ^* [1].
  \]
  Here $\pi_Y \colon Y \to \pt$ is the structure morphism of $Y$, and 
  the pull-back and push-forward morphisms on the right hand side are in 
  $d\Omega_*$.
\end{proof}

The remainder of this section is devoted to constructing a classifying 
map in the opposite direction. To construct this morphism, we want to 
use the universal property of $d\Omega_*$. Since $d\Omega_*$ is 
universal with respect to quasi-smooth pullbacks, we first have to 
construct quasi-smooth pull-backs in $\Omega_*$. This will be done in 
the this section.

More generally, it is possible to show this for any Borel--Moore 
homology theory that has intersections with pseudo-divisors defined (in 
such a way as to commute with smooth pull-back and proper push-forward) 
and satisfies a homotopy invariance property.  

In order to construct these quasi-smooth pullbacks, we first review some 
background material on quasi-smooth embeddings.

\subsection{Background on quasi-smooth embeddings}

Let $f \colon X \hookrightarrow Y$ be a quasi-smooth closed embedding.  
Using the truncation functor, we have a commutative diagram in 
$\Cat{dSch}_k$
\[
  \xymatrix{
    X \ar@{^{(}->}[r]^f & Y \\
    t_0(X) \ar@{^{(}->}[u]^{\iota_X} \ar@{^{(}->}[r]_{t_0(f)} & t_0(Y) 
    \ar@{^{(}->}[u]_{\iota_Y}
  }
\]

We have the following basic result.
\begin{lem}
  \label{lem:N}
  Let $f \colon X \hookrightarrow Y$ be a quasi-smooth embedding of 
  virtual co-dimension $d$. Then $\iota_X^*L_{X/Y}[-1]$ is a locally free sheaf of rank $d$ on 
  $t_0(X)$.
\end{lem}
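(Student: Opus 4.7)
The plan is to show $\iota_X^* L_{X/Y}$ sits in homological degree exactly $1$ as a locally free $\sO_{t_0(X)}$-module of rank $d$; the shift $[-1]$ then deposits it in degree $0$ as claimed.

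First, I would show $L_{X/Y}$ is $1$-connective. The cofiber sequence $f^* L_Y \to L_X \to L_{X/Y}$ of connective $\sO_X$-modules yields on $\pi_0$ the right exact sequence of Kähler differentials
\[
  t_0(f)^* \Omega^1_{t_0(Y)} \longrightarrow \Omega^1_{t_0(X)} \longrightarrow \pi_0(L_{X/Y}) \longrightarrow 0
\]
on $t_0(X)$. Since $t_0(f)$ is a closed immersion of classical schemes, the first map is surjective, so $\pi_0(L_{X/Y}) = 0$ and $L_{X/Y}$ lives in homological degrees $\geq 1$.

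Next, I would apply the Tor-amplitude hypothesis to the $\sO_X$-module $\sO_{t_0(X)} = \pi_0(\sO_X)$ (which is in the heart) to deduce that
\[
  \iota_X^* L_{X/Y} \;=\; L_{X/Y} \otimes_{\sO_X}^{\mathbb{L}} \sO_{t_0(X)}
\]
has homology concentrated in degrees $[0,1]$. Combined with the $1$-connectivity inherited via right t-exactness of $\iota_X^*$, the pullback is concentrated in degree exactly $1$, so of the form $N[1]$ for some $\sO_{t_0(X)}$-module $N$.

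Finally, to see that $N$ is locally free of rank $d$: for any $\sO_{t_0(X)}$-module $K$ in the heart, the identity
\[
  N[1] \otimes_{\sO_{t_0(X)}}^{\mathbb{L}} K \;\simeq\; L_{X/Y} \otimes_{\sO_X}^{\mathbb{L}} K
\]
together with the Tor-amplitude bound puts the left-hand side in degrees $[0,1]$, forcing $N \otimes^{\mathbb{L}} K$ into degrees $[-1,0]$ and hence $N$ flat over $\sO_{t_0(X)}$. Quasi-smoothness makes $L_{X/Y}$ perfect, so $N$ is finitely presented; a finitely presented flat module is locally free. Its rank is pinned down fiberwise: at any $k$-point $p$ of $t_0(X)$,
\[
  d \;=\; -\virdim(f,p) \;=\; \rk H_1(p^* L_{X/Y}) - \rk H_0(p^* L_{X/Y}) \;=\; \rk(p^* N).
\]
The main point of care is the bookkeeping of Tor-amplitude under pullback along $\iota_X$; with that in hand, the rest follows formally from the definitions of quasi-smoothness and closed immersion.
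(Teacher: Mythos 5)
Your proof is correct and follows essentially the same route as the paper's: deduce $1$-connectivity of $L_{X/Y}$ from $f$ being a closed immersion, combine with the Tor-amplitude $\leq 1$ bound from quasi-smoothness to get the pullback concentrated in degree $1$, and read off the rank from the definition of virtual (co)dimension. You have simply supplied the details (the Kähler differential computation of $\pi_0$, and the flatness-plus-finite-presentation argument for local freeness) that the paper leaves implicit.
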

\begin{proof}
  Since $f$ is quasi-smooth, $L_{X/Y}$ is perfect of Tor-amplitude $\leq 
  1$. Furthermore, since $f$ is an embedding, $L_{X/Y}$ is in 
  $\QCoh(X)_{\geq 1}$ (homological grading). It thus follows that $\iota_X^*L_{X/Y}[-1]$ is a 
  locally free sheaf. The claim on the rank is clear from the definition 
  of virtual co-dimension.
\end{proof}

\begin{defn}
  \label{defn:VNB}
  Let $f \colon X \hookrightarrow Y$ be a quasi-smooth embedding.  
  Define the \emph{virtual normal bundle} $N_X Y \to t_0(X)$ to be the 
  geometric vector bundle corresponding to $\iota^*_X 
  L_{X/Y}[-1]^{\vee}$.
\end{defn}

We now compare the virtual normal bundle $N_X Y$ to the normal 
cone $C_X Y$ of the underlying embedding $t_0(f) \colon t_0(X) 
\hookrightarrow t_0(Y)$. Recall that the normal cone is the scheme over 
$t_0(X)$ given by $\Spec_{\sO_{t_0(X)}} (\oplus_{n \geq 0} 
I^n/I^{n+1})$, where $I$ is the ideal sheaf of $t_0(X)$ in $t_0(Y)$.

Using the functoriality properties of the cotangent complex, we have a 
morphism
\[
  \iota_X^* L_{X/Y} \longrightarrow L_{t_0(X)/t_0(Y)}.
\]
Let $I$ be the ideal sheaf of $t_0(X)$ in $t_0(Y)$. It is a classical 
fact that $\pi_1 (L_{t_0(X)/t_0(Y)})=I/I^2$. We thus obtain a morphism
\[
  i \colon C_{X}Y \to N_X Y.
\]

We have the following result which will be the basis for the 
construction of quasi-smooth pullbacks in $\Omega_*$.

\begin{lem}
  \label{lem:CtoNclosed}
  The morphism
  \[
    i \colon C_X Y \longrightarrow N_X Y
  \]
  is a closed embedding.
\end{lem}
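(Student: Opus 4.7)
The plan is to verify that $i \colon C_X Y \to N_X Y$ is a closed embedding locally on $t_0(X)$, reducing the claim to the elementary fact that a graded surjection of sheaves of algebras induces a closed embedding of the associated relative spectra. Since being a closed immersion is a property local on the target, we may shrink $Y$ to an affine open and appeal to the local structure theorem for quasi-smooth closed embeddings: after further shrinking, the quasi-smooth embedding $f \colon X \hookrightarrow Y$ may be presented as the derived zero locus of a section $(f_1, \dots, f_d) \colon Y \to \IA^d_Y$, where $d$ is the virtual codimension of $f$ (Lemma \ref{lem:N}).

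In this local model, base change for the cotangent complex gives $L_{X/Y} \simeq \sO_X^{\oplus d}[1]$, so the conormal sheaf $\iota_X^\ast L_{X/Y}[-1]$ is identified with the free module $\sO_{t_0(X)}^{\oplus d}$ on basis vectors $e_1, \dots, e_d$ dual to the coordinates on $\IA^d$, and $N_X Y = \Spec \Sym_{\sO_{t_0(X)}}(\sO_{t_0(X)}^{\oplus d})$ is the trivial rank $d$ bundle over $t_0(X)$. On the classical side, the ideal of $t_0(X)$ in $t_0(Y)$ is $I = (\bar f_1, \dots, \bar f_d) \subset \sO_{t_0(Y)}$, and $C_X Y = \Spec_{\sO_{t_0(X)}} \bigoplus_{n \geq 0} I^n/I^{n+1}$. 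Tracing through the functoriality map used to construct $i$ shows that the induced map $\pi_1(\iota_X^\ast L_{X/Y}) \to \pi_1(L_{t_0(X)/t_0(Y)}) = I/I^2$ is the surjection sending $e_i$ to $\bar f_i$.

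Consequently, passing to symmetric algebras produces a surjection of graded $\sO_{t_0(X)}$-algebras $\Sym_{\sO_{t_0(X)}}(\sO_{t_0(X)}^{\oplus d}) \twoheadrightarrow \bigoplus_{n \geq 0} I^n / I^{n+1}$, since the target is generated in degree one by the images of the $\bar f_i$. Applying relative $\Spec$ converts this into the desired closed embedding $C_X Y \hookrightarrow N_X Y$ locally, and hence globally. The main obstacle is the careful identification, in the local Koszul presentation, of the functorial map $\iota_X^\ast L_{X/Y} \to L_{t_0(X)/t_0(Y)}$ on $\pi_1$ with the obvious surjection $e_i \mapsto \bar f_i$; once this is in place, the remainder of the argument is standard graded-algebra yoga.
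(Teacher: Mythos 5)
Your argument is essentially correct but takes a genuinely different route from the paper. You reduce to a local Koszul presentation of the quasi-smooth embedding as the derived zero locus of a section $Y \to \IA^d_Y$, and then verify by hand that the functoriality map $\iota_X^* L_{X/Y}[-1] \to I/I^2$ is the surjection $e_i \mapsto \bar f_i$, so the induced map of graded symmetric algebras is onto in degree one and hence onto. The paper instead works directly with a $0$-connective morphism $A \to B$ of simplicial commutative $k$-algebras and proves the surjectivity on $\pi_1$ by a connectivity estimate: it builds a $3\times 3$ diagram of cofiber sequences of cotangent complexes and observes that the cofiber $M$ of $L_{B/A}\otimes_B \pi_0 B \to L_{\pi_0 B/\pi_0 A}$ is also the cofiber of a map between the $2$-connective objects $L_{\pi_0 A/A}\otimes_{\pi_0 A}\pi_0 B$ and $L_{\pi_0 B/B}$, hence is itself $2$-connective, which forces surjectivity on $\pi_1$. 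The trade-off is instructive: your approach is concrete and makes the map $e_i \mapsto \bar f_i$ visible, but it invokes the local structure theorem for quasi-smooth closed embeddings (that they are Zariski-locally derived zero loci of sections of trivial bundles), a non-trivial structural input that the paper never states or proves and does not need. The paper's argument is more robust — it applies without choosing coordinates, does not use quasi-smoothness at all beyond what Lemma \ref{lem:N} already provides, and avoids the "main obstacle" you flag (identifying the functorial map on $\pi_1$ with the obvious surjection), which you assert but do not carry out. If you want to pursue your route, you should either cite a reference for the local Koszul presentation of quasi-smooth embeddings or supply that reduction, and you should actually trace the identification of the map on $\pi_1$ rather than declare it standard; the paper's connectivity argument sidesteps both of these points entirely.
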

\begin{proof}
  The claim is local and will follow once it is shown that $j_X^* 
  L_{X/Y} \longrightarrow I/I^2$ is surjective . Let $f \colon A \to B$ 
  be a 0-connective morphism of simplicial commutative $k$-algebras. We 
  then have the commutative diagram
  \[
    \xymatrix{
      A \ar[r] \ar[d] & B\ar[d] \\
      \pi_0 A \ar[r] & \pi_0 B.
    }
  \]
  This gives the following diagram of cofiber sequences
  \[
    \xymatrix{
      L_A \otimes _A \pi_0 B \ar[r] \ar[d] & L_B \otimes _B \pi_0 B 
      \ar[r] \ar[d]& L_{B/A} \otimes_B \pi_0 B \ar[d] \\
      L_{\pi_0 A} \otimes _{\pi_0 A} \pi_0 B \ar[r] \ar[d] & L_{\pi_0 B} 
      \ar[r] \ar[d] & L_{\pi_0 B / \pi_0 A} \ar[d] \\
      L_{\pi_0 A /A} \otimes _{\pi_0 A} \pi_0 B \ar[r] & L_{\pi_0(B)/B} 
      \ar[r] & M.
    }
  \]
  We have to show that $M \in \QCoh(\pi_0 B)_{\geq 2}$. Now recall that 
  since $A \to \pi_0 A$ is 1-connective, $L_{\pi_0 A /A}$ is in 
  $\QCoh(\pi_0 A)_{\geq 2}$. Likewise $L_{\pi_0 B /B}$ is 2-connective. 
  Since $M$ is the cofiber of 2-connective objects the claim follows.
\end{proof}

\subsection{Deformation to the normal cone}
\label{sec:DNC}

We now introduce deformation to the normal cone for a quasi-smooth 
embedding $X \hookrightarrow Y$.

We begin by reviewing the Rees construction for filtered simplicial 
modules over a field $k$. Given a simplicial $k$-module $M$ with 
filtration $\{M_i\}_{i \in \IZ}$ such that $M_i \subset M_{i+1}$ and 
$M=\cup M_i$, we can form a simplicial graded $k[t]$-module $bigoplus 
M_i t^i$. We call this association the Rees construction.

We now review some facts from commutative algebra of simplicial rings 
\cite{quillen2}. Let $f \colon R \to S$ be a morphism of simplicial 
commutative $k$-algebras which is levelwise surjective. We equip the 
categories of simplicial commutative $k$-algebras and simplicial 
$k$-modules with their standard model structures. We then obtain a fibre 
sequence
\[
  I \to R \to S
\]
of simplicial $k$-modules where $I$ is a simplicial ideal in $R$. Now 
factor this morphism in the model category of simplicial commutative 
$k$-algebras as $R \to P \to S$ where $R \to P$ is a cofibration and $P 
\to S$ is a weak equivalence.  Setting $Q = P \otimes_{R} S$, we obtain 
a multiplication map $m \colon Q \to S$.  This gives a fiber sequence
\[
  J \to Q \to S
\]
in the category of simplicial $k$-modules where $J$ is a simplicial 
ideal in $Q$. The cotangent complex $L_{S/R}$ can then be identified 
with $J/J^2$ in the homotopy category of simplicial $S$-modules.

By identifying $S$ with $S \otimes_{R} R$, it follows that the 
multiplication $m \colon Q \to S$ has a section $s = \id \otimes f$ in 
the homotopy category of simplicial $Q$-modules. In particular, we 
obtain an identification $J \simeq I \otimes_{R}^{\IL} S [1]$ as 
$Q$-modules. By adjunction, we have an identification as $S$-modules $J 
\otimes^{\IL}_Q S \simeq I \otimes_{R}^{\IL} S [1]$. In particular, we 
have $J/J^2 = I/I^2[1]$.  Combining with the formula for the contangent 
complex in the above paragraph, we obtain the identification $L_{S/R} = 
I/I^2[1]$.

Now  assume that $A$ is a smooth discrete simplicial $k$-algebra and $f 
\colon A \to B$ is quasi-smooth morphism which induces a surjective 
morphism $\pi_0 A \to \pi_0 B$. By first factoring $A \to B$ in the 
model category of simplicial $k$-algebras as $A \to B' \to B$, where $A 
\to B'$ is a cofibration and $B' \to B$ is a weak equivalence, and then 
factoring $A \to B'$ as $A \to A' \to B'$, where $A \to A'$ is a 
cofibration and a weak equivalence and $A' \to B'$ is a fibration, we 
can assume that $f' \colon A' \to B'$ is levelwise surjective, and $A'$ 
and $B'$ are both levelwise smooth $k$-algebras. In particular, in the 
fibre sequence
\[
  I \to A' \overset{f'}{\to} B'
\]
the simplicial ideal $I$ is levelwise a regular ideal. Furthermore, 
since $A' \to B'$ is quasi-smooth, using the identification 
$L_{B'/A'}[-1] \simeq I/I^2$ obtained above it follows that $I/I^2$ is a 
projective $A'$-module.

We now filter $A'$ by powers of the simplicial ideal $I$ to obtain a 
filtered simplicial $k$-algebra $(A', F)$. Applying the 
Rees-construction gives a simplicial graded $k[t]$-algebra $\bigoplus 
_{n \in \IZ} F^i t^i$. The fibre over zero can be identified with the 
associated graded simplicial algebra $\bigoplus _{n \geq 0} 
I^n/I^{n+1}$. Since the simplicial ideal $I$ is levelwise regular and 
$I/I^2$ is projective, we obtain an identification $\Sym^*_B (I/I^2) 
\simeq I^n/I^{n+1}$. As a consequence, we can identify the fibre over 
zero of the Rees-algebra $\bigoplus _{n \in \IZ} F^i t^i$ with $\Sym^*_B 
(L_{B/A}[-1])$.

The previous discussion immediately generalizes to schemes, giving us a 
deformation to the normal cone space $M_X^{\circ}Y$. By the fibre-wise 
criterion for flatness, this space is flat over $Y \times \IA^1$. The 
fibre over zero of the truncation $t_0 (M_X^{\circ}Y)$ is a Cartier 
divisor on $t_0(M_X^{\circ}Y)$.  In case $X \hookrightarrow Y$ is a 
quasi-smooth embedding, the fibre over zero is given by the vector 
bundle corresonding to the locally free sheaf $j_X ^* L_{X/Y}[-1]$, and 
thus is the virtual normal bundle $N$ of Definition \ref{defn:VNB}.

\subsection{Pull-back along quasi-smooth embeddings}

%
%

We will define the pull-back along a quasi-smooth embedding $f\colon X 
\hookrightarrow Y$ using the deformation to the normal cone space 
introduced in Section \ref{sec:DNC}. We will abbreviate the space 
$t_0(M^{\circ}_{X}Y)$ to $M^{\circ}$. Recall that this scheme is flat 
over $Y \times \IA^1$, with the virtual normal bundle $N$ of Definition 
\ref{defn:VNB} as fibre over zero.

Applying the localization sequence for algebraic bordism in 
characteristic zero, we obtain an exact sequence
\[
  \Omega_*(N) \overset{i_*}{\longrightarrow} \Omega_*(M^{\circ}) 
  \overset{j^*}{\longrightarrow} \Omega_*(Y \times (\IA^1 \setminus 0)) 
  \longrightarrow 0.
\]
Since $i^*i_*=0$, we thus obtain a morphism
\[
  s_{X/Y} \colon \Omega_*(Y \times (\IA^1  \setminus 0)) \longrightarrow 
  \Omega_{*-1}(N).
\]
Finally, we can define the specialization morphism as the composition
\[
  \sigma_{X/Y} \colon \Omega_*(Y) \overset{\pr^*}{\longrightarrow} 
  \Omega_{*+1} (Y \times \IA^1) \overset{s_{X/Y}}{\longrightarrow} 
  \Omega_* (N).
\]

In complete analogy to the case of pull-backs along regular embeddings, 
we can then define pull-backs along quasi-smooth morphisms.

For the following definition, recall that since algebraic cobordism 
satisfies the extended homotopy property, for any scheme $X$ over $k$ 
and any rank $n$ vector bundle $p \colon E \to X$, the morphism $p^* 
\colon \Omega_*(X) \to \Omega_{*+n}(E)$ is an isomorphism. This allows 
us to form the inverse $(p^*)^{-1}$. In particular, we can apply this to 
the virtual normal bundle. Note that the same reasoning does not apply 
if the normal cone fails to be a vector bundle. This is precisely what 
makes it impossible to define pullbacks along embeddings that are not 
local complete intersections or more generally quasi-smooth.

\begin{defn}
  \label{defn:QsPbOm}
  Let $f \colon X \hookrightarrow Y$ be a quasi-smooth embedding of 
  virtual codimension $d$ with $Y \in \Cat{QPr}_k$, and let $p \colon N 
  \to t_0(X)$ be the projection of the virtual normal bundle.  We define 
  quasi-smooth pullback as the composition
  \[
    f^* \colon \Omega_*(Y) \overset{\sigma_{X/Y}}{\longrightarrow} 
    \Omega_*(N) \overset{(p^*)^{-1}}{\longrightarrow} \Omega_{*-d} 
    (t_0(X)) \overset{(\iota_X)_*}{\longrightarrow} \Omega_{*-d}(X).
  \]
\end{defn}

\begin{rem}
  \label{rem:AfterLift}
  Unraveling the definition of $\sigma_{X/Y}(u)$ for a class $u \in 
  \Omega_*(Y)$, we see that we first have to pull back $u$ to $Y \times 
  (\IA^1 \setminus 0)$ giving us a class $\pr^* u \in \Omega_{*+1}(\IA^1 
  \setminus 0)$, then choosing a preimage $\tilde{u}$ of $\pr^* u$ in 
  $\Omega_{*+1}(M^{\circ})$, and finally intersecting with the Cartier 
  divisor $N$ to obtain $i_N^*\tilde{u} \in \Omega_*(N)$.

  Thus once we have chosen a lifting $\tilde{u} \in 
  \Omega_{*+1}(M^{\circ})$, quasi-smooth pull-back is given by 
  $\iota_X)_* \circ (p^*)^{-1} \circ i_{N}^*$. This relates the general 
  quasi-smooth pull-back to intersecting with an effective Cartier 
  divisor and intersecting with a zero-section.
\end{rem}

We now relate this to an alternative definition, which is closer to the 
definition of virtual pull-backs of \cite{manolache}. Denote by 
$M^{\circ}_{t_0(X)}Y$ the deformation to the normal cone space of the 
inclusion $t_0(X) \hookrightarrow Y$. The fiber over 0 of 
$M^{\circ}_{t_0(X)}Y$ is $C_{t_0(X)} Y$, the normal cone of $t_0(X)$ in 
$Y$.  This is an effective Cartier divisor on $M^{\circ}_{t_0(X)}Y$.  
Again using that $i^* i_*=0$ for this divisor and the exact localization 
sequence for algebraic bordism, there is a specialization morphism
\[
  \sigma_{t_0(X)/Y} \colon \Omega_*(Y) \longrightarrow \Omega_{*+1}(Y 
  \times (\IA^1  \setminus 0)) \longrightarrow \Omega_*(C_{t_0(X)} Y).
\]

By Lemma \ref{lem:CtoNclosed}, we have a closed immersion $j \colon 
C_{t_0(X)} Y \hookrightarrow N_X Y$ of the normal cone of $t_0(X)$ in 
$Y$ into the virtual normal bundle.

\begin{lem}
  Let $f \colon X \to Y$ be a quasi-smooth embedding. Denote by $j 
  \colon C_{t_0(X)}Y \to N_X Y$ the inclusion of Lemma 
  \ref{lem:CtoNclosed}. Then
  \[
    \sigma_{X/Y} = j_* \circ \sigma_{t_0(X)/Y}.
  \]
\end{lem}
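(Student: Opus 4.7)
The plan is to exhibit a proper morphism of deformation spaces $\Phi \colon M^{\circ}_{t_0(X)}Y \to M^{\circ}$ that restricts to the identity on the open part $Y \times (\IA^1 \setminus 0)$ and to $j \colon C_{t_0(X)}Y \hookrightarrow N_X Y$ on the special fibre over $0$, and then to conclude via the functoriality of the localization sequence.

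First, I would construct $\Phi$ by comparing the two Rees constructions of Section \ref{sec:DNC}. The functoriality of the cotangent complex applied to $t_0(X) \hookrightarrow X \hookrightarrow Y$ yields the morphism $\iota_X^* L_{X/Y} \to L_{t_0(X)/t_0(Y)}$ used in Lemma \ref{lem:CtoNclosed}, which on $\pi_1$ is the surjection $\iota_X^* L_{X/Y}[-1] \twoheadrightarrow I/I^2$. Locally, the derived Rees algebra of Section \ref{sec:DNC} has its special fibre identified with $\Sym^{*}_B(L_{B/A}[-1])$, while the classical Rees algebra for $t_0(X) \hookrightarrow Y$ has special fibre $\bigoplus_n I^n/I^{n+1}$, a quotient of $\Sym^{*}(I/I^2)$. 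The surjection on $H_1$ therefore promotes to a surjection of graded algebras compatible with the common open part $Y \times (\IA^1 \setminus 0)$, and hence to a closed embedding $\Phi$ over $Y \times \IA^1$ after globalizing via the functoriality of the Rees construction.

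Second, using that proper push-forward commutes with the localization sequence in $\Omega_{*}$, the morphism $\Phi$ produces a commutative diagram
\[
  \xymatrix{
    \Omega_*(C_{t_0(X)}Y) \ar[r]^-{i_*} \ar[d]_{j_*} & \Omega_*(M^{\circ}_{t_0(X)}Y) \ar[r] \ar[d]^{\Phi_*} & \Omega_*(Y \times (\IA^1 \setminus 0)) \ar@{=}[d] \\
    \Omega_*(N_X Y) \ar[r]^-{i_*} & \Omega_*(M^{\circ}) \ar[r] & \Omega_*(Y \times (\IA^1 \setminus 0)),
  }
\]
where the rows are the relevant pieces of the localization sequence and the right-hand vertical map is the identity because $\Phi$ is an isomorphism away from $0$.

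Third, unravelling the definition of the specialization map as in Remark \ref{rem:AfterLift}, for $u \in \Omega_*(Y)$ one chooses a lift $\tilde u \in \Omega_{*+1}(M^{\circ}_{t_0(X)}Y)$ of $\pr^* u$, and then $\Phi_*\tilde u$ is a lift of $\pr^* u$ to $\Omega_{*+1}(M^{\circ})$ by commutativity of the right square. Applying the Cartier divisor pull-backs $i_{C_{t_0(X)}Y}^{*}$ and $i_{N_X Y}^{*}$ and using the compatibility $i_{N_X Y}^{*} \circ \Phi_{*} = j_{*} \circ i_{C_{t_0(X)}Y}^{*}$ coming from the left square, I would conclude $\sigma_{X/Y}(u) = j_{*}\sigma_{t_0(X)/Y}(u)$.

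The main obstacle will be the first step, the global construction of $\Phi$. The local picture is clear from the Rees discussion, but some care is required to check that the algebra surjections obtained locally patch together compatibly on $Y \times \IA^1$ and produce a genuine closed immersion of flat families whose restriction to $\{0\}$ is the map $j$ of Lemma \ref{lem:CtoNclosed}. Once this is in place, the remaining diagram chase is routine.
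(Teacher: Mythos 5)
Your argument is correct and is essentially the same as the paper's: the crux is \cite[Lemma 6.2.1]{lm}, that intersection with a Cartier divisor commutes with proper push-forward, applied to a proper morphism between the two deformation spaces. The paper's own proof is simply a one-line citation of this lemma, leaving implicit the proper morphism $\Phi \colon M^{\circ}_{t_0(X)}Y \to M^{\circ}$ that you construct explicitly from the surjection of Rees algebras induced by $\iota_X^*L_{X/Y}[-1] \twoheadrightarrow I/I^2$.
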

\begin{proof}
  By \cite[Lemma 6.2.1]{lm}, intersection with a Cartier divisor 
  commutes with proper push-forward.
\end{proof}
\begin{cor}
 \label{cor:EquivToManDef}
 Let $f \colon X \hookrightarrow Y$ be a quasi-smooth embedding. Then
  \[
    f^*(u) = \iota_{X*} \circ (p^*)^{-1} \circ j_* \circ 
    \sigma_{X/Y}(u).
  \]
  for all $u \in d\Omega_*(Y)$.
\end{cor}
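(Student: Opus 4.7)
The plan is to derive this directly by composing Definition \ref{defn:QsPbOm} with the factorization of the specialization map provided by the preceding lemma. By Definition \ref{defn:QsPbOm}, the quasi-smooth pull-back along the embedding $f$ factors as
\[
  f^{*} = (\iota_{X})_{*} \circ (p^{*})^{-1} \circ \sigma_{X/Y},
\]
where $p \colon N_{X}Y \to t_{0}(X)$ is the projection of the virtual normal bundle and $\sigma_{X/Y}$ is the specialization morphism built from the fibre over zero of $t_{0}(M^{\circ}_{X}Y)$.

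The preceding lemma identifies $\sigma_{X/Y}$ with $j_{*} \circ \sigma_{t_{0}(X)/Y}$, where $j \colon C_{t_{0}(X)}Y \hookrightarrow N_{X}Y$ is the closed embedding produced by Lemma \ref{lem:CtoNclosed}. Substituting this factorization into the composition above yields the desired identity, where the specialization on the right-hand side is understood to be the classical one $\sigma_{t_{0}(X)/Y}$ landing in $\Omega_{*}(C_{t_{0}(X)}Y)$ and then pushed forward via $j_{*}$ into $\Omega_{*}(N_{X}Y)$ before inverting $p^{*}$ and pushing forward by $\iota_{X*}$.

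Since all nontrivial content has already been absorbed into the preceding lemma — which itself reduces to the compatibility of Cartier divisor intersection with proper push-forward, \cite[Lemma 6.2.1]{lm} — there is no genuine obstacle remaining at the level of this corollary; it is a purely formal reassembly of the composition. The point of isolating it is to display the quasi-smooth pull-back in the shape that makes its comparison with Manolache's virtual pull-back transparent: one specializes to the classical normal cone, pushes forward into the virtual normal bundle, inverts the homotopy-invariance isomorphism, and corrects by the inclusion of the underlying scheme.
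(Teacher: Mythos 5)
Your proof is correct and takes exactly the route the paper intends (the corollary is stated without its own proof, as an immediate consequence): substitute the factorization $\sigma_{X/Y} = j_* \circ \sigma_{t_0(X)/Y}$ from the preceding lemma into the composition of Definition \ref{defn:QsPbOm}. You also rightly observe that the $\sigma_{X/Y}$ appearing in the displayed formula of the corollary must be read as the classical specialization $\sigma_{t_0(X)/Y}$ (landing in $\Omega_*(C_{t_0(X)}Y)$) for the composition with $j_*$ to make sense — this is a typographical slip in the statement, and your careful unpacking resolves it.
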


\begin{rem}
  By Corollary \ref{cor:EquivToManDef}, the pull-back for quasi-smooth 
  embeddings differs from the pull-back for regular embeddings only by 
  the inclusion of the normal cone into the virtual normal bundle.
\end{rem}

The formula for quasi-smooth pull-backs given in Corollary 
\ref{cor:EquivToManDef} is the most convenient form to prove the 
expected properties of the orientations defined. The remainder of this 
section closely follows the construction of pull-backs for local 
complete intersections of Verdier \cite{verdier}. We first recall some 
basic properties of the specialization homomorphism.

\begin{lem}
  \label{lem:SpecProp}
  Let
    \[
      \xymatrix{
        X' \ar@{^{(}->}[r]^{i'} \ar[d]_{f'} & Y' \ar[d]^{f} \\
        X \ar@{^{(}->}[r]^{i} & Y
      }
    \]
    be a Cartesian diagram in $\Cat{QPr}_k$ with $i$ a closed embedding.  
    Let $g \colon C_{X'}Y' \to C_X Y$ be the induced morphism of normal 
    cones.
  \begin{enumerate}[(a)]
    \item Assume that $f$ in the above diagram is proper. Then
      \[
        \xymatrix{
          \Omega_*(Y') \ar[r]^{\sigma_{X'/Y'}} \ar[d]_{f_*} & 
          \Omega_*(C_{X'} Y') \ar[d]_{g_*} \\
          \Omega_*(Y) \ar[r]_{\sigma_{X/Y}} & \Omega_* (C_{X} Y)
        }
      \]
      commutes.

    \item  Assume that $f$ in the above diagram is smooth of pure 
      relative dimension $d$.  Then 
      \[
        \xymatrix{
          \Omega_*(Y) \ar[r]^{\sigma_{X/Y}} \ar[d]_{f^*} & \Omega_*(C_X 
          Y) \ar[d]^{g^*} \\
          \Omega_{*+d}(Y') \ar[r]_{\sigma_{X'/Y'}} & \Omega_{*+d} 
          (C_{X'} Y')
        }
      \]
      commutes.
  \end{enumerate}
\end{lem}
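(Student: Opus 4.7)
The plan is to leverage the functoriality of the deformation-to-the-normal-cone construction along Cartesian squares, reducing both claims to the axioms (A1)--(A4) of an oriented Borel--Moore functor applied to the localization sequence and to intersection by an effective Cartier divisor.

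The first step is to establish that the hypothesized Cartesian square induces a morphism $\tilde f \colon M^{\circ}_{X'}Y' \to M^{\circ}_{X} Y$ of deformation spaces lying over $f \times \id_{\IA^1}$, fitting into a diagram
\[
  \xymatrix{
    C_{X'} Y' \ar@{^{(}->}[r]^{i_0'} \ar[d]_{g} & M^{\circ}_{X'} Y' \ar[d]^{\tilde f} & Y' \times (\IA^1 \setminus 0) \ar@{_{(}->}[l]_{j'} \ar[d]^{f \times \id} \\
    C_X Y \ar@{^{(}->}[r]_{i_0} & M^{\circ}_X Y & Y \times (\IA^1 \setminus 0) \ar@{_{(}->}[l]^{j}
  }
\]
in which both squares are Cartesian. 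Moreover, $\tilde f$ inherits the relevant property of $f$: it is proper whenever $f$ is proper, and smooth of the same relative dimension whenever $f$ is smooth. This follows from the explicit blow-up construction of the deformation space together with the compatibility of blow-ups with proper or flat base change along the subscheme being blown up.

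For part (a), with $f$ proper, both $\tilde f$ and $f \times \id$ are proper. Given $u \in \Omega_*(Y')$, pick a lift $\tilde u \in \Omega_{*+1}(M^{\circ}_{X'}Y')$ of $\pr^{\prime *} u$ through $j^{\prime *}$. Axiom (A2) applied to the right-hand Cartesian square gives
\[
  j^* \tilde f_* \tilde u = (f \times \id)_* j^{\prime *} \tilde u = (f \times \id)_* \pr^{\prime *} u = \pr^* f_* u,
\]
so $\tilde f_* \tilde u$ is a valid lift for computing $\sigma_{X/Y}(f_* u)$. Since intersection with the Cartier divisor $i_0$ is the Chern class operator $c_1(\sO_{M^{\circ}_X Y}(C_X Y))$ (up to the usual identification), axiom (A3) and the base-change compatibility coming from the Cartesian left square give $i_0^* \tilde f_* \tilde u = g_* i_0^{\prime *} \tilde u$, yielding the desired identity $\sigma_{X/Y} \circ f_* = g_* \circ \sigma_{X'/Y'}$.

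For part (b), with $f$ smooth of pure relative dimension $d$, the induced $\tilde f$ and $g$ are smooth of relative dimension $d$. The argument is dual: if $\tilde u$ lifts $\pr^* u$, then axiom (A1) applied to the commutative right square yields $j^{\prime *} \tilde f^* \tilde u = (f \times \id)^* j^* \tilde u = \pr^{\prime *} f^* u$, so $\tilde f^* \tilde u$ is a valid lift for computing $\sigma_{X'/Y'}(f^* u)$; axiom (A4) together with the Cartesian left square then gives $i_0^{\prime *} \tilde f^* \tilde u = g^* i_0^* \tilde u$, producing the desired identity. The main obstacle is the base-change functoriality of the deformation space and the properness/smoothness of $\tilde f$; once these are established, the remainder is a short diagram chase closely paralleling the classical case treated in \cite[\S 6.2]{lm}.
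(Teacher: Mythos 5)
Your overall strategy is the same as the paper's: use functoriality of the deformation-to-the-normal-cone space along the Cartesian square, and then reduce to the compatibility of the localization sequence and of intersection with a pseudo-divisor with proper push-forward (resp.\ smooth pull-back). The diagram you draw with the two Cartesian squares over $\{0\}$ and $\IA^1\setminus 0$ is correct: $C_X Y$ and $Y\times(\IA^1\setminus 0)$ are, respectively, the fibre of $M^\circ_X Y$ over $0$ and over $\IA^1\setminus 0$, and since $\tilde f$ commutes with the projections to $\IA^1$, both squares are Cartesian for any $f$. However, there are two genuine gaps in your justification.

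First, you assert that $\tilde f$ is proper when $f$ is proper because ``blow-ups commute with proper or flat base change along the subscheme being blown up.'' Blow-ups commute with \emph{flat} base change, which does yield $M^\circ_{X'}Y'\cong M^\circ_XY\times_{Y\times\IA^1}(Y'\times\IA^1)$ when $f$ is smooth and establishes smoothness of $\tilde f$ and $g$; this matches the paper's explicit identification in the smooth case. But blow-ups do \emph{not} commute with proper base change, so this reasoning fails for part (a). The correct argument for properness of $\tilde f$ is that the canonical map $M^\circ_{X'}Y'\to M^\circ_XY\times_{Y\times\IA^1}(Y'\times\IA^1)$ is a closed immersion (it identifies $M^\circ_{X'}Y'$ with the closure of the generic fibre), and the target is proper over $M^\circ_XY$ when $f$ is proper, so the composite $\tilde f$ is proper.

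Second, you identify the Gysin map $i_0^\ast\colon\Omega_\ast(M^\circ_XY)\to\Omega_{\ast-1}(C_XY)$ with the Chern class operator $c_1(\sO(C_XY))$, and then invoke axiom (A3). These are different operators: the Chern class operator is an endomorphism of $\Omega_\ast(M^\circ_XY)$, while the Gysin map is a \emph{refinement} with target supported on the divisor. The commutativity $i_0^\ast\circ\tilde f_\ast = g_\ast\circ i_0^{\prime\ast}$ (resp.\ the analogue for smooth pull-back) is not a formal consequence of (A3) or any of the Borel--Moore functor axioms; it is a substantive theorem about $\Omega_\ast$, namely \cite[Lemma 6.2.1]{lm}, which the paper cites for precisely this step. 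Your outline needs that citation rather than the axiomatic argument. Once these two points are repaired, your proof coincides with the paper's.
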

\begin{proof}
  By functoriality of the deformation to the normal cone space we have a 
  morphism $h \colon M^{\circ}_{X'}Y' \to M^{\circ}_{X}Y$ which induces 
  $g \colon C_{X'}Y' \to C_X Y$ on the exceptional divisors. 

Assume that $f$ is proper. Intersection with a pseudo-divisor 
  commutes with proper push-forward \cite[Lemma 6.2.1]{lm}, so the diagram
  \[
    \xymatrix{
      \Omega_*(Y' \times (\IA^1 \setminus 0)) \ar[r]^(.6){\sigma^*_{X'/Y'}} 
      \ar[d] & \Omega_*(C_{X'}Y') \ar[d]^{g_*} \\
      \Omega_*(Y \times (\IA^1 \setminus 0)) \ar[r]_(.6){\sigma^*_{X/Y}} & 
      \Omega_*(C_{X}Y) \\
    }
  \]
  commutes, and the claim follows.

For the case that $f$ is smooth, one can explicitly compute that 
$M^{\circ}_{X^\prime}Y^\prime= Y^\prime \times \IA^1 \times_{Y \times 
  \IA^1} M^{\circ}_{X} Y$ and $h$ is the natural projection.  In 
particular, by base change, the morphism $g$ is smooth.  The proof now 
proceeds as above since intersection with a pseudo-divisor commutes with 
smooth pull-back \cite[Lemma 6.2.1]{lm}.  \end{proof}

\begin{rem}
  The previous Lemma \ref{lem:SpecProp} also admits an indirect proof.  
  Assume there exists some Cartesian diagram such that the diagrams in 
  (a) and (b) do not commute. Then one has an immediate contradiction to 
  the functoriality properties of the refined pull-backs of $\Omega_*$ 
  stated in \cite[Proposition  6.6.3]{lm}.
\end{rem}

\begin{lem}
  \label{lem:QSPushPull}
  Let
  \[
    \xymatrix{
      X' \ar@{^{(}->}[r]^{i'} \ar[d]_{f'} & Y' \ar[d]^{f} \\
      X \ar@{^{(}->}[r]^{i} & Y
    }
  \]
  be a homotopy Cartesian diagram in $\Cat{dQPr}_k$ with $i$ a 
  quasi-smooth closed embedding.
  \begin{enumerate}[(a)]
       \item Assume that in the above diagram $f$ is proper. Then
      \[
        i^*f_* = f'_*i^{'*}.
      \]
       \item Assume that in the above diagram $f$ is smooth. Then
      \[
        i^{\prime *} f^* = f^{\prime *} i^*.
      \]
  \end{enumerate}
\end{lem}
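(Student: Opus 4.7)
The plan is to use the explicit presentation of quasi-smooth pullback given by Corollary \ref{cor:EquivToManDef},
\[
  i^{*} = \iota_{X*} \circ (p^{*})^{-1} \circ j_{*} \circ \sigma_{t_0(X)/Y},
\]
and to show that each of the four constituents is compatible with $f_{*}$ in case (a) or $f^{*}$ in case (b). The key preliminary observation is that the functoriality of deformation to the normal cone, together with the homotopy base-change formula $L_{X'/Y'} \simeq f'^{*} L_{X/Y}$, produces a commuting tower of maps over $f$ and $f'$: a morphism $h\colon M^{\circ}_{X'}Y' \to M^{\circ}_{X}Y$ of deformation spaces, restricting to $g\colon C_{t_0(X')}Y' \to C_{t_0(X)}Y$ on classical normal cones and to $\tilde g\colon N_{X'}Y' \to N_{X}Y$ on virtual normal bundles, compatibly with the inclusions $j$ and $j'$.

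For part (a), since $f$ is proper so are $f'$, $h$, $g$ and $\tilde g$, properness being stable under arbitrary base-change. Compatibility of $\sigma_{t_0(X)/Y}$ with $f_{*}$ is exactly Lemma \ref{lem:SpecProp}(a); compatibility of $j_{*}$ with the proper push-forwards $g_{*}$ and $\tilde g_{*}$ is simply functoriality of proper push-forward on $\Omega_{*}$; compatibility of $(p^{*})^{-1}$ with push-forward along $t_0(f')$ reduces to axiom (A2) applied to the Cartesian square of vector bundles
\[
  \xymatrix{
    N_{X'}Y' \ar[r]^{\tilde g} \ar[d]_{p'} & N_{X}Y \ar[d]^{p} \\
    t_0(X') \ar[r]_{t_0(f')} & t_0(X),
  }
\]
which is Cartesian because $N_{X'}Y' \cong t_0(f')^{*} N_{X}Y$; and finally $\iota_{X*} \circ t_0(f')_{*} = f'_{*} \circ \iota_{X'*}$ is functoriality of push-forward. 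Concatenating these four identities proves (a).

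For part (b), since $f$ is smooth so are $f'$, $g$ and $\tilde g$ (smoothness of $g$ is verified inside the proof of Lemma \ref{lem:SpecProp}), and the big square relating $C_{t_0(X')}Y'$, $C_{t_0(X)}Y$, $N_{X'}Y'$ and $N_{X}Y$ is itself Cartesian. The argument now runs in parallel to (a) but with smooth pullbacks in place of proper push-forwards: Lemma \ref{lem:SpecProp}(b) handles $\sigma_{t_0(X)/Y}$; axiom (A2) applied to the Cartesian square of $j$, $j'$, $g$, $\tilde g$ (with $j$ closed hence proper and $\tilde g$ smooth) gives $\tilde g^{*} \circ j_{*} = j'_{*} \circ g^{*}$; axiom (A1) applied to the commutativity $p \circ \tilde g = t_0(f') \circ p'$ yields $p'^{*} \circ t_0(f')^{*} = \tilde g^{*} \circ p^{*}$, which after inverting the vector-bundle pullbacks becomes $(p'^{*})^{-1} \circ \tilde g^{*} = t_0(f')^{*} \circ (p^{*})^{-1}$; and $\iota_{X'*} \circ t_0(f')^{*} = f'^{*} \circ \iota_{X*}$ by construction of the extended smooth pullback on the truncation.

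The main technical obstacle is the opening observation: one must verify that a homotopy Cartesian square of derived schemes with $i$ a quasi-smooth embedding produces genuinely Cartesian squares of classical normal cones and virtual normal bundles, with the induced morphisms inheriting the relevant properness or smoothness from $f$. Compatibility of the virtual normal bundle with homotopy base-change is a direct consequence of the analogous property for the relative cotangent complex, while naturality of $j\colon C \hookrightarrow N$ follows from the Rees-algebra construction reviewed in Section \ref{sec:DNC}. Once this naturality is in hand, each of the four levels in the factorisation of $i^{*}$ yields a routine compatibility, and the lemma follows.
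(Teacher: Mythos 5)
Your proposal is correct and takes essentially the same approach as the paper: both factor $i^*$ through the specialization morphism, the closed immersion of the normal cone into the virtual normal bundle, the inverse of the vector-bundle pullback, and $\iota_{X*}$, and then verify compatibility level by level using Lemma \ref{lem:SpecProp} at the first stage. The paper compresses the remaining stages into a single commutative ladder while you spell out the individual axioms invoked, but the substance is the same; one small caution is that the square of \emph{classical} normal cones $C_{X'}Y' \to C_X Y$ need not be Cartesian in the proper case (only commutativity is required there), whereas in the smooth case it genuinely is Cartesian because $f$ is flat, which is exactly where your application of (A2) uses it.
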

\begin{proof} 
  Since the truncation functor $t_0$ takes homotopy Cartesian diagrams 
  to Cartesian diagrams, we can apply Lemma \ref{lem:SpecProp} to the 
  diagram
  \[
    \xymatrix{
      t_0(X') \ar@{^{(}->}[r]^{t_0(i')} \ar[d]_{t_0(f')} & t_0(Y') 
      \ar[d]^{t_0(f)} \\
      t_0(X) \ar@{^{(}->}[r]^{t_0(i)} & t_0(Y).
    }
  \]
  Using basic functoriality properties of the cotangent complex we 
  obtain a further commutative diagram 
  \[
    \xymatrix{
      C_{X'}Y' \ar[r] \ar[d]_g & N_{X'} Y' \ar[d] \\
      C_X Y \ar[r] & N_X Y.
    }
  \]
  Here $g$ is the morphism of normal cones induced by base-change used 
  in Lemma \ref{lem:SpecProp}.

  Now assume that $f$ is proper. Combining Lemma \ref{lem:SpecProp} with 
  the above diagram it follows that
  \[
    \xymatrix{
      \Omega_*(Y') \ar[r]^{\sigma_{X'/Y'}} \ar[d]_{f_*} & 
      \Omega_*(C_{X'/Y'}) \ar[r] \ar[d]^{g_*} & \Omega_*(N_{X'}Y') 
      \ar[r] \ar[d] & \Omega_*(X) \ar[d]^{f'_*}\\
      \Omega_*(Y) \ar[r]_{\sigma_{X/Y}} & \Omega_*(C_{X/Y}) \ar[r] & 
      \Omega_*(N_{X} Y) \ar[r]& \Omega_*(X)
    }
  \]
  commutes. Since the composition of the horizontal morphisms is 
  respectively $i^*$ and $i^{\prime *}$ the claim follows. The case of 
  $f$ smooth is analogous.
\end{proof}

Our next task is to verify functoriality of the orientation for 
quasi-smooth closed embeddings. Apart from the fact that we have to keep 
track of the embeddings of the normal cones in the virtual normal 
bundles, the proof proceeds in exact analogy to the case of regular 
embeddings of discrete schemes.

\begin{prop}
  \label{prop:FuncQSE}
  Let $X \overset{i}{\hookrightarrow}Y \overset{j}{\hookrightarrow} Z$ 
  be quasi-smooth embeddings. Then
  \[
    (j \circ i)^* = i^*j^*
  \]
\end{prop}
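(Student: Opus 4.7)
The plan is to adapt Verdier's deformation-theoretic proof of functoriality of pullbacks along regular embeddings to the quasi-smooth setting, with careful attention to the distinction between normal cones and virtual normal bundles. First I would extract from functoriality of the cotangent complex the short exact sequence governing the virtual normal bundles in play. The cofiber triangle $i^{*} L_{Y/Z} \to L_{X/Z} \to L_{X/Y}$ consists of perfect complexes each concentrated in homological degree one, since $i$, $j$, and $j \circ i$ are quasi-smooth embeddings. Pulling back along $\iota_X$, shifting by $[-1]$, and dualizing produces the short exact sequence of vector bundles
\[
0 \longrightarrow N_X Y \longrightarrow N_X Z \longrightarrow i^{*} N_Y Z \longrightarrow 0
\]
on $t_0(X)$, which is the combinatorial input of the rest of the argument.

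Next I would construct a derived double deformation to the normal cone. Following the classical Fulton recipe in the derived setting, one first forms $M^{\circ}_Y Z$, then extends $i$ to a quasi-smooth embedding $X \times \IA^1 \hookrightarrow M^{\circ}_Y Z$ and applies the derived deformation construction of Section \ref{sec:DNC} a second time. The truncation of the result is a classical scheme $M^{\circ\circ}$ flat over $Z \times \IA^1 \times \IA^1$ whose relevant fibers one identifies as follows: over the generic point, a copy of $Z$; over $(s \neq 0, 0)$, the virtual normal bundle $N_X Z$; over $(0, t \neq 0)$, the virtual normal bundle $N_Y Z$ with $X$ embedded via the composition of $i$ and the zero section; and over $(0,0)$, the total space of $N_X Y \oplus i^{*} N_Y Z$ over $t_0(X)$, namely the split extension of the above sequence.

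With the double deformation in hand, I would apply Lemma \ref{lem:SpecProp} iteratively along the two axes of $\IA^1 \times \IA^1$ to verify that $(j \circ i)^{*} u$ and $i^{*} j^{*} u$ both equal the class obtained from $u$ by running the specialization morphism of the double deformation all the way into $\Omega_*(N_X Y \oplus i^{*} N_Y Z)$, followed by the inverse Thom isomorphism and push-forward by $\iota_{X *}$. Comparing the two orderings of specialization is the classical argument, whose only non-formal input is the compatibility of the inverse Thom isomorphism for $N_X Z$ with the iterated inverse Thom isomorphisms for $N_X Y$ and $i^{*} N_Y Z$. That compatibility is a standard consequence of the extended homotopy property and the projective bundle theorem for $\Omega_{*}$ in characteristic zero, applied to the short exact sequence above.

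The main obstacle will be reconciling the normal cone picture with the virtual normal bundle picture. While the virtual normal bundles fit into the clean short exact sequence displayed above, the underlying normal cones $C_{t_0(X)} Y$, $C_{t_0(Y)} Z$, and $C_{t_0(X)} Z$ admit no such relation, so the scheme-theoretic fibers of $M^{\circ\circ}$ are in general strictly smaller than the corresponding virtual normal bundles. This is remedied by invoking Corollary \ref{cor:EquivToManDef}, which rewrites every quasi-smooth pullback as the composition of specialization, push-forward along the inclusion $C \hookrightarrow N$ of Lemma \ref{lem:CtoNclosed}, inverse Thom isomorphism, and inclusion of the truncation. Once every specialization class is transported up to its virtual normal bundle, the problem reduces to the regular-embedding statement, where Verdier's original proof applies essentially verbatim.
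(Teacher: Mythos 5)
Your proposal follows essentially the same route as the paper: both hinge on the double deformation space $M^{\circ}_{X \times \IA^1 / M^{\circ}_{Y/Z}} \to \IA^1 \times \IA^1$, the Fulton--Verdier identifications of its fibers, iterated applications of Lemma \ref{lem:SpecProp}, and the $C \hookrightarrow N$ comparison of Lemma \ref{lem:CtoNclosed} / Corollary \ref{cor:EquivToManDef} to promote normal cones to virtual normal bundles. The paper packages the argument as a special case (over $s = 0$, where $t_0(j)$ is replaced by the embedding of $t_0(Y)$ into $N_Y Z$ and the splitting $N_X Z \cong N_X Y \oplus N_Y Z$ is automatic from the retraction onto $Y$) together with a chase of the commutative diagram built from $q'$, $q$ and Verdier's isomorphism $M^{\circ}_{t_0(X)/N_Y Z} \cong M^{\circ}_{t_0(X)/t_0(Y)} \times_{t_0(Y)} N_Y Z$, then reduces the general case by double deformation; you propose to go directly to the double deformation and iterate the specialization lemma along both axes, which is the same content differently organized, and your observation that the one non-formal input is the compatibility of the Thom isomorphism for $N_X Z$ with the iterated ones for $N_X Y$ and $i^* N_Y Z$ is precisely what the paper's special-case diagram establishes.
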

\begin{proof}
  Let us first assume that $t_0(j) \colon t_0(Y) \hookrightarrow t_0(Z)$ 
  is given by the embedding
  \[
    t_0(Y) \hookrightarrow C_Y Z \hookrightarrow N_Y Z
  \]
  of $Y$ in the virtual normal bundle. Let $p \colon N_Y Z \to t_0(Y)$ 
  be the projection. We have an isomorphism  \cite[Proof of Theorem 
  6.5]{fulton}
  \[
    C_X (N_Y Z) \simeq C_X Y \times _{t_0(X)} (N_Y Z \times _{t_0(Y)} 
    t_0(X))
  \]
  inducing a morphism
  \[
    q' \colon C_X (N_Y Z) \longrightarrow C_X Y.
  \]
  Additionally assume that the virtual normal bundle $N_X Z$ splits as 
  $N_Y Z \oplus N_X Y$, and let $q \colon N_X Y \oplus N_Y Z \to N_X Y$ 
  be the projection, giving us a commutative diagram
  \[
    \xymatrix{
      C_X (N_Y Z) \ar[r] \ar[d]^{q'} & N_X Y \oplus N_Y Z \ar[d]_q\\
      C_X Y \ar[r] & N_X Y
    }
  \]
   Here the lower horizontal morphism is the canonical inclusion of the 
   normal cone in the virtual normal bundle.  In this situation, we have 
   an isomorphism of the open deformation to the normal cone spaces 
   $M^{\circ}_{t_0(X)/N_Y Z} \cong M^{\circ}_{t_0(X)/t_0(Y)} \times 
   _{t_0(Y)} N_Y Z$ compatible with the projections to $\IA^1$ 
   \cite[Corollaire 2.18]{verdier}. We thus obtain a commutative diagram
  \[
    \xymatrix{
      \Omega_*(Y) \ar[r]^{\sigma_{X/Y}} \ar[d]_{p^*} & \Omega_{*} (C_X 
      Y) \ar[r] \ar[d]_{q^{'*}}& \Omega_* (N_X Y) \ar[d]_{q^*} \\
      \Omega_{*+d}(Z) \ar[r]_{\sigma_{X/Z}} & \Omega_{*+d}(C_X Z) \ar[r] 
      & \Omega_{*+d}(N_X Y \oplus N_Y Z)
    }
  \]
  To prove our claim, since $j^*=(p^*)^{-1}$ it suffices to prove
  \[
    (j \circ i)^* p^* = i^*.
  \]
  Let $\pi \colon N_X Y \to X$ and $\rho \colon N_X Y \oplus N_Y Z \to 
  X$ be the structure morphisms.  Since $q^* \circ \pi^*=\rho^*$ is an 
  isomorphism, it suffices to prove
  \[
    (\pi \circ q)^* \circ (j \circ i)^* \circ p^* = q^* \circ \pi^* 
    \circ i^*.
  \]
  But this is immediate from the commutativity of the above diagram. The 
  reduction of the general case to the special case treated here is a 
  standard application of the double deformation space
  \[
    M^{\circ}_{X \times \IA^1 / M^{\circ}_{Y/Z}} \longrightarrow \IA^1 
    \times \IA^1.
  \]
  obtained by applying the deformation to the normal cone construction 
  to the embedding $X \times \IA^1 \hookrightarrow M^{\circ}_{Y/Z}$
  as in \cite[Theorem 4.8]{manolache},  \cite[Theorem 6.6.5]{lm} or 
  \cite[Theoreme 4.4]{verdier}.
\end{proof}

\subsection{Pull-backs for quasi-smooth morphisms}

After having defined orientations for quasi-smooth closed embeddings we 
now move on general quasi-smooth morphisms. Since we are only working 
with quasi-projective derived schemes a smoothing is always 
available.

\begin{lem}
  Let $f \colon X \to Y$ be a quasi-smooth morphism in $\Cat{dQPr}_k$.  
  Assume that $f = p \circ i = p' \circ i'$ are two smoothings of 
  $f$. Then
  \[
    i^*p^*=i^{'*}p^{'*}.
  \]
\end{lem}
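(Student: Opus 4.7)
The plan is to dominate both smoothings by a common third one built from $M$ and $M'$, and then reduce the comparison to already-established functorialities. Set $\tilde{M} := M \times_Y M'$, with smooth projections $q \colon \tilde{M} \to M$ and $q' \colon \tilde{M} \to M'$ (each the base change of a smooth morphism), and smooth structure map $\tilde{p} := p \circ q = p' \circ q' \colon \tilde{M} \to Y$. The universal property of the fibre product yields a closed embedding $\tilde{\imath} := (i,i') \colon X \hookrightarrow \tilde{M}$, whose quasi-smoothness follows from the cotangent complex triangle for $X \to \tilde{M} \to M$ combined with the smoothness of $q$ and the quasi-smoothness of $i$. Thus $(\tilde{\imath}, \tilde{p})$ is a third smoothing of $f$.

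By the symmetric roles of $(M,p,i)$ and $(M',p',i')$ it suffices to show $i^{*}p^{*} = \tilde{\imath}^{*} \tilde{p}^{*}$, since the same argument with the roles swapped yields $i'^{*}p'^{*} = \tilde{\imath}^{*} \tilde{p}^{*}$, and the two combine to the lemma. Using smooth-pullback functoriality (axiom (A1)) we may rewrite $\tilde{p}^{*} = q^{*} p^{*}$, so the task reduces to establishing the compatibility
\[
  \tilde{\imath}^{*}\, q^{*} \;=\; i^{*}.
\]

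To prove this, factor $\tilde{\imath}$ through the homotopy Cartesian square
\[
  \xymatrix{
    X \times_Y M' \ar@{^{(}->}[r]^-{j} \ar[d]_{q_X} & \tilde{M} \ar[d]^{q} \\
    X \ar@{^{(}->}[r]^{i} & M,
  }
\]
where $j$ is the base change of $i$ along the smooth map $q$ (hence a quasi-smooth closed embedding), $q_X$ is the base change of $p'$ along $f$ (hence smooth), and $s \colon X \to X \times_Y M'$, $x \mapsto (x,i'(x))$, is the canonical section of $q_X$ determined by $i'$. The section $s$ is a regular closed embedding of codimension equal to the relative dimension of $q_X$, hence a quasi-smooth closed embedding, and one checks that $\tilde{\imath} = j \circ s$ with $q_X \circ s = \id_X$. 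Proposition \ref{prop:FuncQSE} then gives $\tilde{\imath}^{*} = s^{*} j^{*}$; Lemma \ref{lem:QSPushPull}(b) applied to the displayed square gives $j^{*} q^{*} = q_X^{*} i^{*}$; and the standard identity $s^{*} q_X^{*} = \id$ for the section of a smooth morphism chains these together into
\[
  \tilde{\imath}^{*} q^{*} \;=\; s^{*} j^{*} q^{*} \;=\; s^{*} q_X^{*} i^{*} \;=\; i^{*},
\]
as required.

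The main technical point is the section identity $s^{*} q_X^{*} = \id$: everything else is a clean recombination of Proposition \ref{prop:FuncQSE}, Lemma \ref{lem:QSPushPull}(b), and smooth-pullback functoriality. Since $s$ is a regular closed embedding whose virtual normal bundle coincides with the ordinary normal bundle $s^{*} T_{q_X}$, the identity reduces via the truncation to the corresponding calculation in classical $\Omega_{*}$ carried out in \cite[Chapter 6]{lm}, and so no new geometric input is needed beyond that already assembled in the preceding subsections.
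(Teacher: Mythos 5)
Your proof is correct and follows essentially the same route as the paper: both build a smoothing inside the fibre product of the two smooth ambients, factor the embedding through a section of the induced smooth projection, and chain together Proposition~\ref{prop:FuncQSE}, Lemma~\ref{lem:QSPushPull}(b), and the identity $s^*q_X^* = \id$ for a section. The paper phrases the reduction a bit differently — it first uses the diagonal in $P\times_Y P'$ to reduce to the case where $f$ is itself an embedding, then compares with the trivial smoothing — but the underlying computation is identical to yours; note also that where the paper cites Lemma~\ref{lem:QSPushPull}(a) in this step, the relevant part is in fact (b), as you correctly use.
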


\begin{proof}
  Let $X \overset{i}{\hookrightarrow}P \overset{p}{\to}Y$ and $X 
  \overset{i'}{\hookrightarrow}P' \overset{p'}{\to}Y$ be the two 
  smoothings. Using the diagonal embedding $X \to P \times_Y P'$ and the 
  cartesian diagram
  \[
    \xymatrix{
      P' \ar[r] \ar[d]& P \times_Y P' \ar[d]\\
      X \ar[r] & P
    }
  \]
  one quickly reduces to the case where $f \colon X \hookrightarrow Y$ 
  is a quasi-smooth embedding and one has a smoothing
  \[
    \xymatrix{
      & P \ar[d]^{p} \\
      X \ar@{^{(}->}[ur]^{i} \ar@{^{(}->}[r]_{f} & Y.
    }
  \]
  We then have the Cartesian and homotopy Cartesian diagram
  \[
    \xymatrix{
      P' \ar@{^{(}->}[r]^{f'} \ar@<0.5ex>[d]^{p'} & P \ar[d]^{p} \\
      X \ar@<0.5ex>[u]^{s} \ar@{^{(}->}[r]_{f} & Y.
    }
  \]
  with $f' \circ s = i$. Since $p' \circ s = \id$ and $p'$ is smooth, we 
  obtain that $L_s$ is of Tor-dimension $\leq 1$. The morphism $s$ is an 
  embedding since $f^\prime$ and $i = f^\prime \circ s$ are embeddings 
  (in particular, $f^\prime$ is separated), and thus $s$ is a 
  quasi-smooth embedding. By Proposition \ref{prop:FuncQSE}, we have
  \[
    i^*p^* = s^*f^{'*}p^*.
  \]
  Using Lemma \ref{lem:QSPushPull}(a), the right hand side is equal to 
  $s^*p^{'*}f^*$. Since $s^*p^{'*}= \id$, the claim follows.
\end{proof}
  
This allows us to define orientations for quasi-smooth morphisms in 
$\Omega_*$.

\begin{defn}
  \label{defn:PbQs}
  Let $f \colon X \to Y$ be a quasi-smooth morphism in $\Cat{dQPr}_k$, 
  and let $X \overset{i}{\hookrightarrow} P \overset{p}{\to} Y$ be a 
  smoothing of $f$ with $p$ of relative dimension $n$ and $i$ of 
  virtual co-dimension $m$. Let $d=n-m$. We then define 
  \emph{quasi-smooth pull-back} as the composition
  \[
    f^* \colon  \Omega_*(Y) \overset{p^*}{\longrightarrow} 
    \Omega_{*+n}(P) \overset{i^*}{\longrightarrow} \Omega_{*+d}(X).
  \]
  Here $i^*$ is the quasi-smooth pull-back for quasi-smooth embeddings 
  of Definition \ref{defn:QsPbOm}.
\end{defn}

\begin{thm}
  Let $k$ be a field of characteristic zero. Then $\Omega_{*}$ is an 
  oriented Borel--Moore functor on $\Cat{dQPr}_k$ of geometric type with 
  quasi-smooth pullbacks.
\end{thm}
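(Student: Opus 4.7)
The geometric-type oriented Borel--Moore functor structure on $\Omega_*$ over $\Cat{dQPr}_k$ is already in place: as noted in the discussion preceding Corollary \ref{cor:OmToDOm}, the truncation isomorphism $\iota_{X*} \colon \Omega_*(t_0(X)) \xrightarrow{\sim} \Omega_*(X)$ transports smooth pullback, proper pushforward, first Chern classes, and external products from the classical theory, so every axiom of Definition \ref{defn:BMFgt} reduces at once to its classical counterpart. The remaining task is to verify the five axioms (QS1)--(QS5) for the quasi-smooth pullback of Definition \ref{defn:PbQs}.

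Axioms (QS1)--(QS4) are all to be handled by a uniform strategy: use a smoothing to reduce each statement to the conjunction of a fact about quasi-smooth closed embeddings (already proved above) and a classical fact about smooth morphisms. For (QS1), unwinding Definition \ref{defn:QsPbOm} applied to the zero section $s_0 \colon X \to L$, the virtual normal bundle is $L$ itself, and the specialization morphism through the deformation space reproduces, on the truncation, the classical operator of intersection with the Cartier divisor $s_0(X)$; this is precisely the $c_1(L)$ transported from classical $\Omega_*$. For (QS2), choose smoothings $X \hookrightarrow P \to Y$ and $Y \hookrightarrow Q \to Z$; then the composition $X \hookrightarrow P \times_Y Q \to Q \to Z$ is a smoothing of $g \circ f$, and functoriality of smooth pullback combined with Proposition \ref{prop:FuncQSE} reduces the claim to Lemma \ref{lem:QSPushPull}(b) applied to the homotopy Cartesian square over $Y \hookrightarrow Q$. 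For (QS3), smooth the quasi-smooth side of the square as $Y \hookrightarrow P \to Z$, split the homotopy pullback into two homotopy Cartesian squares, and apply axiom (A2) of Definition \ref{defn:BMF} to the smooth part and Lemma \ref{lem:QSPushPull}(a) to the embedding part. Axiom (QS4) follows from the behavior of the deformation to the normal cone space under products, together with (A7) and the fact that $p^{*} \times q^{*} = (p \times q)^{*}$ for vector bundle projections.

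The main obstacle is axiom (QS5), the normalization of the virtual fundamental class on a strict normal crossing divisor $E$ in a smooth scheme $X$. The structure morphism $\pi_E$ factors as $E \hookrightarrow X \to \pt$ with $E \hookrightarrow X$ a local complete intersection, and one must identify $\pi_E^*([1])$ with $(\zeta_E)_*([E \to |E|])$, where the latter is the formal group law expression \eqref{eq:FundSNC}. Since $E$ is a classical scheme, Definition \ref{defn:QsPbOm} specializes on the embedding $E \hookrightarrow X$ to the deformation-to-the-normal-cone construction, which is exactly the lci pullback used by Levine--Morel in \cite[Chapter 6]{lm}. Once this identification of two constructions of lci pullback is recorded, the desired identity is the content of the formal group law computation carried out in \cite{lm} expressing the fundamental class of an SNC divisor in terms of $F_\IL$ applied to the $c_1$'s of the components. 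The heart of the argument is therefore the comparison of the specialization morphism defined here with the refined pullback of \cite[\S6.6]{lm} on underived schemes; once this alignment is in place, the SNC normalization is an immediate consequence of their theory.
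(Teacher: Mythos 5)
Your proposal matches the paper's argument essentially step for step: reduce the geometric-type BMF axioms to the classical theory via the truncation isomorphism, prove (QS1)--(QS4) by factoring through smoothings and combining facts about smooth pullbacks with Lemma \ref{lem:QSPushPull} and Proposition \ref{prop:FuncQSE}, and reduce (QS5) to the SNC normalization of \cite{lm} since the statement only involves classical schemes. One small caveat: in your (QS2) discussion the object $P \times_Y Q$ is not well-formed (there is no map $Q \to Y$ since $Q$ is a smoothing space over $Z$ containing $Y$ as a closed subscheme, not a scheme over $Y$); the paper instead invokes the ladder of compatible smoothings from \cite[Remark 5.1.2]{lm}, which is the correct formulation of the same idea.
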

\begin{proof}
  It remains to verify the axioms of Definition \ref{defn:BMFqs}.  To 
  prove functoriality and thus (QS2), choose smoothings as in 
  \cite[Remark 5.1.2]{lm} such that we obtain a ladder of smoothings
  \[
    \xymatrix{
      X \ar[r] \ar[dr] & P_1 \ar[r] \ar[d] & P \ar[d] \\
      & Y \ar[r] \ar[dr]& P_2 \ar[d] \\
      & & Z.
    }
  \]
  with the square Cartesian and homotopy Cartesian. To prove axiom 
  (QS3), factor $f$ as $p \circ i$. The statement then immediately 
  follows from the individual statements for $p$ and $i$.  For $p$ the 
  statement is clear since $p$ is smooth, for $i$ this is Lemma 
  \ref{lem:QSPushPull}(b). The proof of (QS4) is straight forward from 
  the definitions. Since the axiom (QS5) only applies to smooth and thus 
  underived schemes, we can deduce it from the corresponding statement 
  in \cite[Proposition 7.2.2]{lm}.
\end{proof}

Since $d\Omega_*$ is the universal Borel--Moore functor with orientations 
for quasi-smooth morphisms we obtain the following Corollaries.

\begin{cor}
  \label{cor:dOmToOm}
  Let $k$ be a field of characteristic zero.
  We then obtain a classifying morphism
  \[
    \vartheta_{\Omega} \colon d\Omega_{*} \to \Omega_{*}.
  \]
\end{cor}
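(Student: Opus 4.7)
The plan is to derive the corollary as a direct application of the universal property of $d\Omega_*$ proved in Theorem \ref{thm:universal}. That theorem asserts that $d\Omega_*$ is the universal oriented Borel--Moore functor of geometric type on $\Cat{dQPr}_k$ equipped with quasi-smooth pull-backs. So once $\Omega_*$ is known to be a functor of the same kind on $\Cat{dQPr}_k$, the classifying morphism $\vartheta_{\Omega}$ is produced by universality with no further input.

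The preceding theorem in this section has already verified every axiom required by Definition \ref{defn:BMFqs}: $\Omega_*$ extends to $\Cat{dQPr}_k$ via truncation, carries proper push-forwards and smooth pull-backs, first Chern class operators, and an external product making it an oriented Borel--Moore functor of geometric type; and Definition \ref{defn:PbQs} together with the subsequent verification equips it with quasi-smooth pull-backs satisfying (QS1)--(QS5). The characteristic zero hypothesis is used here exactly because the construction of the specialization morphism in Definition \ref{defn:QsPbOm} relies on the localization sequence for algebraic bordism, which is only known to hold in characteristic zero.

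With these hypotheses in place I would simply invoke Theorem \ref{thm:universal}: applied to the oriented Borel--Moore functor with quasi-smooth pull-backs $A_* := \Omega_*$, it produces a unique natural transformation
\[
  \vartheta_{\Omega} \colon d\Omega_* \longrightarrow \Omega_*,
\]
whose value on a generator $[Y \overset{f}{\longrightarrow} X]$ is $f_* \pi_Y^*[1]$, where $\pi_Y \colon Y \to \pt$ is the structure morphism and the push-forward and quasi-smooth pull-back are taken in $\Omega_*$.

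There is essentially no obstacle here; the work has been done in the theorem immediately preceding the corollary. The only point worth flagging is the tacit compatibility: one should observe that on classical smooth generators $[Y \to X]$ the formula $f_* \pi_Y^*[1]$ reduces, via the isomorphism $\Omega_*(t_0(Y)) \cong \Omega_*(Y)$ and the identification of $\pi_Y^*[1]$ with the usual fundamental class of a smooth scheme, to the tautological class $[Y \to X] \in \Omega_*(X)$. This confirms that $\vartheta_\Omega$ is the expected candidate for an inverse to $\vartheta_{d\Omega}$, setting the stage for the Grothendieck--Riemann--Roch comparison in the next section.
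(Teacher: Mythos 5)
Your proof is correct and takes essentially the same approach as the paper: invoke the universal property of $d\Omega_*$ from Theorem \ref{thm:universal}, using the fact (established in the theorem immediately preceding the corollary) that $\Omega_*$ is an oriented Borel--Moore functor of geometric type with quasi-smooth pull-backs on $\Cat{dQPr}_k$. The extra remarks you add — the explicit formula on generators and the role of characteristic zero in the localization sequence — are consistent with the paper's development, though the paper itself leaves them tacit here.
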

\begin{proof}
  This immediately follows from the universal property of $d\Omega_*$ as 
  the universal Borel-Moore homology theory on $\Cat{dQPr}_k$ with 
  quasi-smooth pullbacks. Since $\Omega_*$ is also such a theory, the 
  existence of the classifying morphism $\vartheta_{\Omega}$ follows.
\end{proof}

\begin{cor}
  \label{cor:EasyHalf}
  For all $X \in \Cat{dQPr}_k$, the natural transformation
  \[
    \vartheta_{d\Omega} \colon \Omega_*(X) \longrightarrow d\Omega_*(X)
  \]
  is injective.
\end{cor}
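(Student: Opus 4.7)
The plan is to exhibit $\vartheta_{\Omega}$ as a left inverse of $\vartheta_{d\Omega}$, so that injectivity of the latter is immediate. Concretely, I would consider the composition
\[
  \vartheta_{\Omega} \circ \vartheta_{d\Omega} \colon \Omega_{*} \longrightarrow d\Omega_{*} \longrightarrow \Omega_{*},
\]
and show that this is the identity natural transformation on $\Omega_{*}$.

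For this, it suffices to check the claim on generators of $\Omega_{*}(X)$, i.e., on classes of the form $[f \colon Y \to X]$ with $Y \in \Cat{Sm}_{k}$ and $f$ proper. By the construction of $\vartheta_{d\Omega}$ in Corollary \ref{cor:OmToDOm} (which is a formal application of the universal property of $\Omega_{*}$ as the universal oriented Borel--Moore functor of geometric type on $\Cat{dQPr}_{k}$), we have
\[
  \vartheta_{d\Omega}([f \colon Y \to X]) = f_{*} \pi_{Y}^{*}[1] \in d\Omega_{*}(X),
\]
where the pullback is the quasi-smooth pullback in $d\Omega_{*}$. Since $Y$ is smooth, $\pi_{Y} \colon Y \to \mathrm{pt}$ is smooth, and Definition \ref{defn:pb} gives $\pi_{Y}^{*}[1] = [\mathrm{id}_{Y}]$, so the image is just the class $[f \colon Y \to X] \in d\Omega_{*}(X)$ thought of in $d\Omega_{*}$. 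Applying $\vartheta_{\Omega}$, which by Corollary \ref{cor:dOmToOm} and the universal property of $d\Omega_{*}$ (Theorem \ref{thm:universal}) is given on generators by the same formula $[g \colon Z \to X] \mapsto g_{*}\pi_{Z}^{*}[1]$ but now with pullback in $\Omega_{*}$, one obtains again $f_{*}\pi_{Y}^{*}[1] \in \Omega_{*}(X)$. For $Y$ smooth, $\pi_{Y}^{*}[1]$ in $\Omega_{*}$ is simply the fundamental class $[Y \to Y]$ (smooth pullback of the generator of $\Omega_{*}(\mathrm{pt})$), and pushing forward yields the original generator $[f \colon Y \to X] \in \Omega_{*}(X)$.

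Hence $\vartheta_{\Omega} \circ \vartheta_{d\Omega}$ agrees with the identity on all generators, and by linearity on all of $\Omega_{*}(X)$, so $\vartheta_{d\Omega}$ is injective. The only thing requiring care is to verify that the two quasi-smooth pullbacks along the smooth structure morphism $\pi_{Y} \colon Y \to \mathrm{pt}$ of a smooth $Y$ indeed produce the expected fundamental class in each theory; this reduces in $d\Omega_{*}$ to Definition \ref{defn:pb} applied to a product with $Y$, and in $\Omega_{*}$ to the fact that the quasi-smooth pullback of Definition \ref{defn:PbQs} reduces, when the defining smoothing can be taken with trivial embedding part, to the ordinary smooth pullback, both of which are completely formal checks. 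This is the only non-cosmetic step, but it is forced by construction since both natural transformations were produced from the same universal recipe $[Z \to X] \mapsto g_{*}\pi_{Z}^{*}[1]$.
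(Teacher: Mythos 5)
Your proposal is correct and takes essentially the same route as the paper: exhibit $\vartheta_{\Omega}$ as a left inverse of $\vartheta_{d\Omega}$ by checking the composite on generators $[f\colon Y\to X]$ with $Y$ smooth, using that both transformations are given by the universal formula $[Z\to X]\mapsto g_*\pi_Z^*[1]$ and that for smooth $Y$ the quasi-smooth pull-back $\pi_Y^*$ in either theory reduces to the ordinary smooth pull-back. The paper phrases the middle step slightly differently — it commutes $\vartheta_{\Omega}$ past $f_*$ and $\pi_Y^*$ rather than first computing $\vartheta_{d\Omega}([Y\to X])=[Y\to X]$ in $d\Omega_*$ — but the underlying argument is the same.
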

\begin{proof}
  We want to show that $\vartheta_{\Omega}$ is a left inverse. To 
  prevent confusion, in the following we will decorate pull-backs and 
  push-forwards with the homology theory they are taken in. Let $f 
  \colon Y \to X$ be a bordism cycle.  Since $\vartheta_{\Omega}$  
  commutes with proper push-forwards and smooth pull-backs we have
  \begin{align*}
    \vartheta_{\Omega} \circ \vartheta_{d\Omega}([Y \to X])&= 
    \vartheta_{\Omega}(f^{d\Omega}_*\pi_{Y,d\Omega}^*[1]) \\
    &=f^{\Omega}_*\pi_{Y,\Omega}^* \vartheta_{\Omega}[1]\\ &= [Y \to X].
  \end{align*}
\end{proof}

\section{Spivak's Theorem}
\label{sect:Spivak}

We now have two Borel--Moore functors of geometric type on 
$\Cat{dQPr}_k$ at our disposal, derived algebraic bordism $d\Omega_*$ 
and algebraic bordism $\Omega_*$. Both are equipped with orientations 
for quasi-smooth morphisms. We have also constructed natural 
transformations $\vartheta_{d\Omega}\colon \Omega_* \to d\Omega_*$ and 
$\vartheta_{\Omega}\colon d\Omega_* \to \Omega_*$ between these 
theories. Here $\vartheta_{d\Omega}$ is induced by the universal 
property of $\Omega_*$ as the universal Borel-Moore homology theory of 
geometric type on $\Cat{dQPr}_k$, and $\vartheta_{\Omega}$ is induced by 
the universal property of $d\Omega$ as the universal Borel-Moore 
homology theory on $\Cat{dQPr}_k$ with quasi-smooth pull-backs.

The goal of this section is to compare how the orientations for 
quasi-smooth morphisms of these two theories interact.  In the following 
we will prove a Grothendieck--Riemann--Roch type result stating that 
$\vartheta_{d\Omega}$ in fact commutes with these orientations. As a 
direct consequence of this result we obtain an algebraic version of 
Spivak's theorem for derived cobordism in the differentiable setting.

The strategy for proving our Grothendieck--Riemann--Roch type result is 
the same as in the classical case treated in \cite{verdier} or 
\cite{fulton}. We first check the compatibility of the orientations in 
specific settings, and later reduce the general case to the known 
setting.

A word on notation: In the following we will often encounter formulas 
involving pull-backs and push-forwards in different homology theories.  
Except in special cases, we have chosen not to decorate these operations 
with the homology theories they are taken in. We hope this does not lead 
to confusion.

\subsection{Preliminaries on $d\Omega_*$}

For certain types of bordism classes, we want to reduce pull-back along 
a quasi-smooth embedding in $d\Omega_*$ to intersecting with a effective 
Cartier divisor. The key ingredient in this step is again deformation to 
the normal cone.

Recall from Corollary \ref{cor:EasyHalf} that $\vartheta_{d\Omega}$ is 
injective. We let $d\Omega_*^{\cl}$ denote the image of 
$\vartheta_{d\Omega}$. For any $X \in \Cat{dQPr}_k$, classes in 
$d\Omega_*^{\cl}$ are thus given by proper morphisms $[Y \to X]$ where 
$Y$ is assumed to be smooth.

\begin{lem}
  \label{lem:FactorOverN}
  Let $f \colon X \hookrightarrow Y$ be a quasi-smooth embedding, and 
  let $g \colon Y' \to Y$ be a morphism of derived schemes such that $g$ 
  factors over the normal bundle $N_X Y$. Then we have an equivalence of 
  derived schemes
  \[
    Y' \times_Y^h X \simeq Y' \times_{N_X Y}^h X.
  \]
\end{lem}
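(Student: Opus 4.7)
My plan starts by parsing the hypothesis carefully. I would read ``$g$ factors over the normal bundle $N_X Y$'' as the datum of a morphism $\tilde g \colon Y' \to N_X Y$ together with an identification $g \simeq \phi \circ \tilde g$, where $\phi \colon N_X Y \to Y$ is a natural morphism associated to the quasi-smooth embedding $f$.  Concretely, in a local Koszul presentation of $f$ by a regular sequence $(f_1, \ldots, f_c)$ on a smooth chart of $Y$, the map $\phi$ is determined by the assignment $f_i \mapsto \xi_i$, where the $\xi_i$ are the linear coordinates on the fibres of $N_X Y$; in particular $\phi \circ s_0 \simeq f$ with $s_0 \colon X \hookrightarrow N_X Y$ the zero section.

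Once this factorization is in hand, the next step is to reduce the asserted equivalence to the single claim that the square
\[
  \xymatrix{
    X \ar@{=}[d] \ar[r]^{s_0} & N_X Y \ar[d]^{\phi} \\
    X \ar[r]_{f} & Y
  }
\]
is homotopy Cartesian.  Base change of homotopy fibre products along $\tilde g \colon Y' \to N_X Y$ would then give
\[
  Y' \times_Y^h X \simeq Y' \times_{N_X Y}^h (N_X Y \times_Y^h X) \simeq Y' \times_{N_X Y}^h X,
\]
which is the conclusion.

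To verify the displayed square is homotopy Cartesian, I would work Zariski-locally on $Y$.  In the local Koszul model above, the claim reduces to the base-change computation $B[\xi_1, \ldots, \xi_c] \otimes^{L}_A B \simeq B$, where $B = A/(f_1, \ldots, f_c)$ and the ring map $A \to B[\xi_1, \ldots, \xi_c]$ sends $f_i \mapsto \xi_i$.  This follows by tensoring the Koszul resolution $K^\bullet(f_1, \ldots, f_c) \to B$ with $B[\xi_1, \ldots, \xi_c]$ over $A$: the resulting complex is itself a Koszul resolution of $B$ as a $B[\xi]$-module with respect to the regular sequence $\xi_1, \ldots, \xi_c$, so its homology is concentrated in degree zero and gives $B$.

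The main obstacle will be the global construction of $\phi$, since such a splitting $N_X Y \to Y$ typically does not exist as a morphism of derived schemes outside the local Koszul setting.  Here the deformation to the normal cone space $M^\circ_X Y \to \IA^1$ of Section \ref{sec:DNC} should be the essential tool: it supplies a flat family containing a globally defined copy of $X \times \IA^1$ whose fibres over $1$ and $0$ recover the embeddings $X \hookrightarrow Y$ and $s_0 \colon X \hookrightarrow N_X Y$ respectively.  This allows one to replace the problematic global $\phi$ by a well-defined geometric object on which the local Koszul calculations assemble via descent for homotopy fibre products, concluding the proof.
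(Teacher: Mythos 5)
The paper's proof takes a different, and much more direct, route: it constructs the natural comparison morphism $Y' \times_{N_X Y}^h X \to Y' \times_{Y}^h X$ out of the commuting square formed by the zero section $s_0 \colon t_0(X) \to N_X Y$ and the canonical morphism $N_X Y \to t_0(X) \to X \xrightarrow{f} Y$, and then verifies that this map is an equivalence by the criterion of comparing truncations and relative cotangent complexes. Notice that with \emph{this} canonical morphism $N_X Y \to Y$, the square
\[
  \xymatrix{
    t_0(X) \ar[r]^{s_0} \ar[d] & N_X Y \ar[d] \\
    X \ar[r]_{f} & Y
  }
\]
is \emph{not} homotopy Cartesian --- locally $N_X Y \times_Y^h X$ is $B[\xi] \otimes_A^{\mathbb{L}} B = B[\xi,\eta]$ with $\eta_i$ exterior generators, not $B$ --- so the reduction-to-a-single-square-plus-base-change strategy is not available here, and one really must compare the two fibre products directly.

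Your proposal instead posits a different morphism $\phi \colon N_X Y \to Y$ sending $f_i \mapsto \xi_i$, i.e.\ a tubular-neighbourhood-type retraction. Your local Koszul computation $B[\xi]\otimes_A^{\mathbb{L}} B \simeq B$ is correct \emph{for that $\phi$}, and if such a $\phi$ existed globally your base-change argument would indeed prove the lemma. But, as you yourself observe, such a $\phi$ does not exist in algebraic geometry (it depends on the choice of the regular sequence and fails to glue), and your proposed remedy via the deformation to the normal cone does not fill this gap: the morphism $N_X Y \to Y$ that the deformation space actually provides on the fibre over $0$ is the canonical projection-followed-by-$f$, not your $\phi$, and the ``descent for homotopy fibre products'' you invoke would require the locally defined $\phi$'s to be compatible on overlaps, which they are not. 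So the last paragraph of your proposal is a genuine gap rather than a routine globalisation step. The paper's route --- defining the map from the universal property and checking $t_0$ and the cotangent complex (for which one uses $L_{t_0(X)/N_X Y} \simeq \iota_X^* L_{X/Y}$) --- sidesteps the need for any global retraction.
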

\begin{proof}
  This follows immediately from the criterion that a morphism of derived 
  schemes is an equivalence if it induces an isomorphism on the 
  truncation $t_0$ and an equivalence of the cotangent complexes.
\end{proof}

\begin{prop}
  \label{prop:RedToCar}
  Let $f \colon X \to Y$ be a quasi-smooth embedding, and let $u \in 
  d\Omega_*^{\cl}(Y)$. Then there exists a class $u' \in 
  d\Omega_*^{\cl}(N)$ such that
  \[
    f^* (u) = s^* (u').
  \]
\end{prop}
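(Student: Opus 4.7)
The plan is to reduce to the case where a smooth representative of $u$ meets $f$ transversely, and then to exhibit $u'$ directly as the derived fibre product of this representative with $N$ over $Y$.

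Write $u = [g \colon Z \to Y]$ with $Z$ smooth and $g$ proper. First I would smooth $g$ to a closed embedding $Z \hookrightarrow P$ followed by a smooth morphism $P \to Y$ via Definition \ref{defn:smoothable}, and consider inside the smooth scheme $P$ the pulled-back quasi-smooth embedding $X' := X \times_Y^h P$. Levine's moving lemma in characteristic zero, applied inside $P$, then produces a smooth bordism between $Z$ and a smooth $Z' \hookrightarrow P$ transverse to $X'$ in the paper's sense. Pushing this bordism down to $Y$ and invoking the natural transformation $\vartheta_{d\Omega}$ gives the equality $u = [g' \colon Z' \to Y]$ in $d\Omega_*^{\cl}(Y)$, with $g'$ transverse to $f$. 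Since the quasi-smooth pullback in $d\Omega_*$ respects the bordism relation by construction, this reduces the problem to the transverse situation, in which $f^*(u) = [W' \to X]$ with $W' := Z' \times_Y^h X = Z' \times_Y X$ smooth.

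Next, using the factorisation $N \xrightarrow{p} X \xrightarrow{f} Y$ of the canonical map $N \to Y$, I would set
\[
  V := Z' \times_Y^h N \simeq W' \times_X^h N,
\]
the identification being a straightforward application of associativity. Via $p$, $V$ is the pullback of the vector bundle $N \to X$ to $W'$; since $W'$ is smooth, so is $V$, and $V \to N$ is proper because $W' \to X$ is. Thus $u' := [V \to N]$ defines a class in $d\Omega_*^{\cl}(N)$.

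Finally, one more associativity computation yields
\[
  V \times_N^h X \simeq (W' \times_X^h N) \times_N^h X \simeq W' \times_X^h (N \times_N^h X) \simeq W' \times_X^h X = W',
\]
using that $N \times_N^h X \simeq X$ since one factor is $\mathrm{id}_N$ and the other is the zero section $s$. Hence $s^*(u') = [W' \to X] = f^*(u)$, as required. The main obstacle is the first step: applying Levine's moving lemma inside a smoothing of $g$ and ensuring that the moved cycle remains proper over $Y$. This is essentially a matter of choosing $P$ with a suitable projective compactification over $Y$; once the transverse reduction is in hand, the remainder of the argument is a mechanical sequence of derived fibre product manipulations.
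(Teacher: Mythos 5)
Your approach is genuinely different from the paper's, and the difference matters. The paper avoids any appeal to transversality. It takes the deformation to the normal cone space $M := t_0(M^{\circ}_X Y)$, observes that $\pr^* u$ lies in $d\Omega_*^{\cl}(Y \times \IA^1)$, extends it (via the localization theorem for $\Omega_*$) to a class $\tilde{u} \in d\Omega_*^{\cl}(M)$, takes $u'$ to be its restriction to the fibre over $0$, which is $N$, and then invokes Lemma~\ref{lem:FactorOverN} to see that $f^*(u) = s^*(u')$. The only moving input is the localization exact sequence for the open complement $Y \times (\IA^1 \setminus 0) \subset M$.

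The gap in your argument is the moving step. You want to replace $Z \hookrightarrow P$ by a bordant $Z' \hookrightarrow P$ which is transverse (in the derived sense) to the quasi-smooth closed subscheme $X' = X \times_Y^h P$ of the smooth scheme $P$, with the bordism remaining proper over $Y$. But the result quoted in the paper as ``Levine's moving lemma'' (\cite[Thm. 6.4.12]{lm}) is specifically about moving cobordism cycles into good position with respect to a single pseudo-divisor, not with respect to an arbitrary closed subscheme, and certainly not with respect to a quasi-smooth \emph{derived} subscheme of arbitrary virtual codimension. No such general transversality theorem is available in $\Omega_*$: the whole reason the Fulton--MacPherson / Levine--Morel machinery uses deformation to the normal cone is precisely to avoid needing one. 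If a moving-to-transversality statement at the level of generality you invoke were known, one would not need derived geometry (or deformation to the normal cone) to define l.c.i.\ pullbacks in the first place. Your construction of $u'$ from a transverse representative and the closing derived fibre-product computations are fine, but they sit downstream of an unproved --- and, as stated, unavailable --- moving lemma. You have correctly flagged properness of the moved cycle as a worry, but the more fundamental issue is that the required transversality theorem does not exist in \cite{lm}.

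By contrast, the paper's proof only needs to extend a class across a single Cartier divisor (the fibre over $0$ in $M$), which is exactly what the localization sequence supplies, and then to appeal to Lemma~\ref{lem:FactorOverN} to convert the bordism in $M$ into the desired identity $f^*(u) = s^*(u')$. I would recommend replacing the transversality reduction with that argument.
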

\begin{proof}
  We let $M$ be the truncation $t_0(M_X Y)$ of the deformation to the 
  normal cone space of $X$ in $Y$, and let $\pr \colon Y \times \IA^1 
  \to Y$ be the projection on the first factor. We then obtain a class 
  $\pr^* u \in d\Omega_{*+1} (Y \times \IA^1)$. Identify $Y \times 
  \IA^1$ with an open subset of $M$.  Since $\pr^* u$ is in 
  $d\Omega^{\cl}_{*+1}(Y \times \IA^1)$, we can extend it to a class 
  $\tilde{u}$ in $d\Omega^{\cl}_{*+1}(M)$. Since the fiber over zero of 
  $M$ is the virtual normal bundle $N_{X}Y$, the class $\tilde{u}$ 
  provides a bordism between $u \in d\Omega_*^{\cl}(X)$ and a class $u' 
  \in d\Omega_*(N)$.  The remaining claim follows from Lemma 
  \ref{lem:FactorOverN}.
\end{proof}

\begin{rem}
  \label{rem:AfterLiftDOm}
  Once we haven chosen a lifting $\tilde{u}$, we obtain the following 
  explicit formula for quasi-smooth pullback:
  \[
    f^*(u) = s^* \circ i^*(\tilde{u})
  \]
  This reduces all questions about quasi-smooth pull-backs to 
  intersection with a Cartier divisor followed by pull-back along a 
  zero-section.
\end{rem}

%

We need to compare our Euler classes with more traditionally defined 
Chern classes in cases of overlap. The injectivity of 
$\vartheta_{d\Omega}$ allows us to define Chern class operators on 
$d\Omega_*^{\cl}$ as follows. With $X \in \Cat{QPr}_k$, let $E \to X$  
be a vector bundle of rank $e+1$ with associated projective bundle $q 
\colon \IP(E) \to X$.  We denote the universal line bundle as $\sO(1)$ 
and let $\xi$ be its first Chern class operator on $d\Omega_*(\IP(E))$.  
Composing with pull-back along $q$,  we have operators
\[
  \phi_j := \xi^j \circ q^* \colon d\Omega_*(X) \longrightarrow 
  d\Omega_{*+e-j}(X).
\]
Let
\[
  \Phi \colon \bigoplus_{j=0}^{n}  d\Omega_{*-e+j} (X)\longrightarrow 
  d\Omega_{*}(\IP(E))
\]
be the sum over the operators $\phi_j$. 

\begin{lem}
  \label{lem:PBT}
  The morphism
  \[
    \Phi \colon \bigoplus_{j=0}^{n} d\Omega^{\cl}_{*-e+j}(X) 
    \longrightarrow d\Omega^{\cl}_{*}(\IP(E))
  \]
  is an isomorphism.
\end{lem}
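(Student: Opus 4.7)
The strategy is to reduce the claim to the Projective Bundle Theorem for classical algebraic bordism. By Corollary \ref{cor:EasyHalf}, the natural transformation $\vartheta_{d\Omega}$ is injective on all of $\Cat{dQPr}_k$, so by definition of $d\Omega^{\cl}_*$ it induces a natural isomorphism of graded abelian groups $\Omega_*(Y) \cong d\Omega^{\cl}_*(Y)$ for every $Y \in \Cat{dQPr}_k$. Under this identification the map $\Phi$ should become the classical Projective Bundle morphism for $\Omega_*$, which is an isomorphism since $\Omega_*$ is a Borel--Moore homology theory in the sense of \cite{lm} and therefore satisfies the Projective Bundle Theorem of \cite[Definition 5.1.3]{lm}.

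The first step is to verify that the operators $\phi_j = \xi^j \circ q^*$ actually preserve the subgroup $d\Omega^{\cl}_*$, so that $\Phi$ restricts to a well-defined map of the stated form. This is where the key input is used: $\vartheta_{d\Omega}$ was constructed via the universal property of $\Omega_*$ as the universal oriented Borel--Moore functor of geometric type on $\Cat{dQPr}_k$, hence by construction it is a morphism of such functors. In particular it commutes with smooth pull-backs and with first Chern class operators of line bundles. Consequently $\phi_j(\vartheta_{d\Omega}(\alpha)) = \vartheta_{d\Omega}(c_1(\sO(1))^j \circ q^*(\alpha))$ for every $\alpha \in \Omega_{*-e+j}(X)$, and the right-hand side manifestly lies in $d\Omega^{\cl}_*(\IP(E))$.

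Combining these two steps, the restriction of $\Phi$ to $\bigoplus_j d\Omega^{\cl}_{*-e+j}(X)$ is intertwined by $\vartheta_{d\Omega}$ with the classical Projective Bundle morphism $\bigoplus_j \Omega_{*-e+j}(X) \to \Omega_*(\IP(E))$. Since the latter is an isomorphism and $\vartheta_{d\Omega}$ is a bijection onto $d\Omega^{\cl}_*$ both on source and target, the restriction of $\Phi$ is itself an isomorphism, which is the desired conclusion.

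The main subtlety, though not really a serious obstacle, lies in ensuring that $\phi_j$ preserves the classical subgroup. A priori this is not evident, because the first Chern class operator in $d\Omega_*$ is defined through the derived fibre product $s_0^* s_{0*}$ and so could in principle produce genuinely derived cycles even from a smooth input. Naturality of $\vartheta_{d\Omega}$ as a morphism of oriented Borel--Moore functors of geometric type forces these outputs to coincide with the first Chern class computed in $\Omega_*$, and hence to remain classical; once that is in hand, the proof is essentially pure transport of structure.
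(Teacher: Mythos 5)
Your argument is correct and is essentially the paper's own proof, just spelled out in more detail: you identify $d\Omega^{\cl}_*$ with $\Omega_*$ via the injective $\vartheta_{d\Omega}$, observe that $\vartheta_{d\Omega}$ intertwines $\Phi$ with the classical projective bundle map since it commutes with smooth pull-backs and first Chern classes, and then invoke the Projective Bundle Theorem for $\Omega_*$. No discrepancies.
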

\begin{proof}
  Since $d\Omega_*^{\cl}$ is precisely the image of 
  $\vartheta_{d\Omega}$, this follows from the fact that 
  $\vartheta_{d\Omega}$ commutes with smooth pull-back and first Chern 
  class operators, and that the Projective Bundle Theorem holds in 
  $\Omega_*$.
\end{proof}
The following corollary reduces many 
questions about vector bundles to sums of line bundles via the 
splitting principle.
\begin{cor}
  \label{cor:qInj}
  The morphism
  \[
    q^* \colon d\Omega_*^{\cl}(X) \longrightarrow 
    d\Omega_{*+e}^{\cl}(\IP(E))
  \]
  is injective.
\end{cor}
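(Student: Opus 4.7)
The plan is to read this off directly from the Projective Bundle Theorem established in Lemma \ref{lem:PBT}. Unwinding definitions, the operator $\phi_0 = \xi^0 \circ q^* = q^*$ is exactly the first component of the decomposition map
\[
  \Phi \colon \bigoplus_{j=0}^{e} d\Omega^{\cl}_{*-e+j}(X) \longrightarrow d\Omega^{\cl}_{*}(\IP(E)),
\]
so injectivity of $q^*$ should be a formal consequence of injectivity of $\Phi$.

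More concretely, suppose $\alpha \in d\Omega^{\cl}_{*}(X)$ satisfies $q^*\alpha = 0$ in $d\Omega^{\cl}_{*+e}(\IP(E))$. Shifting the grading index so that $\alpha$ sits in the $j=0$ summand $d\Omega^{\cl}_{(*+e)-e}(X) = d\Omega^{\cl}_{*}(X)$ of the source of $\Phi$ (in degree $*+e$), I would consider the element of $\bigoplus_{j=0}^{e} d\Omega^{\cl}_{*+j}(X)$ whose $j=0$ coordinate is $\alpha$ and whose other coordinates are zero. Its image under $\Phi$ is exactly $\phi_0(\alpha) = q^*\alpha = 0$. Lemma \ref{lem:PBT} asserts that $\Phi$ is an isomorphism and in particular injective, so the chosen element must vanish, forcing $\alpha = 0$.

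There is essentially no obstacle here — the corollary is purely a matter of extracting one summand from the isomorphism $\Phi$. The only mild point to be careful about is the degree bookkeeping, since the relative dimension $e$ of $q$ produces a shift between the source and target of $q^*$ that has to be matched against the $j=0$ summand of $\Phi$. Once that is verified, the statement follows from the trivial fact that if a map out of a direct sum is injective, then so is its restriction to any summand.
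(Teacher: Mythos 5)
Your argument is correct and matches the paper's (implicit) proof: Corollary~\ref{cor:qInj} is stated without proof as an immediate consequence of Lemma~\ref{lem:PBT}, and your unfolding — that $\phi_0 = q^*$ is the restriction of the isomorphism $\Phi$ to the $j=0$ summand, hence injective — is precisely the intended reasoning, with the degree shift by $e$ handled correctly.
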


Using the method of \cite[Section 4.1.7]{lm}, as an immediate 
consequence of the Projective Bundle Theorem we obtain the existence of 
uniquely defined operators
\[
  \tilde{c}_i(E) \colon d\Omega_*^{\cl}(X) \longrightarrow 
  d\Omega^{\cl}_{*-i}(X)
\]
satisfying the relations
\[
  \sum_{i=0}^{n} (-1)^i c_1(\sO(1))^{n-i} \circ q^* \circ 
  \tilde{c}_i(E)=0.
\]
Note that by construction  $c_1=\tilde{c}_1$. These Chern classes 
satisfy all the expected properties, e.g., the Whitney product formula. In 
particular, if $E = \oplus_{i=0}^{e}L_i$ is a direct sum of line 
bundles, then the $j$-th Chern class of $E$ is the $j$-th symmetric 
polynomial in the $c_1(L_i)$.

A further immediate consequence of Lemma \ref{lem:PBT} is an explicit 
formula for the pull-back along a zero-section of a vector bundle.
\begin{cor}
  \label{cor:PBZeroSec}
  Let $X \in \Cat{QPr}_k$, let $p\colon E \to X$ be a rank $r$ vector 
  bundle, and let $q \colon \IP(E \oplus 1) \to X$ be the corresponding 
  projective bundle with universal quotient bundle $\xi$. Let $i \colon 
  E \hookrightarrow \IP(E \oplus 1)$ denote the canonical open 
  embedding.  Then for any $u \in d\Omega_*^{\cl}(\IP(E \oplus 1))$, we 
  have
  \[
    (p^*)^{-1} (i^*u) = q_*(c_r(\xi) \cap u).
  \]
\end{cor}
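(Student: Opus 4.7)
The plan is to show that both sides of the claimed equality reduce to $\iota^{*} u$, where $\iota \colon X \hookrightarrow \IP(E \oplus 1)$ is the zero-section embedding sending $x$ to $[0:1]$. The crucial factorization is $\iota = i \circ s_{0}$, where $s_{0} \colon X \to E$ is the zero section of $p$.

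For the left-hand side, I would first argue that $p^{*}$ is invertible on $d\Omega_{*}^{\cl}$: this is the extended homotopy property for $\Omega_{*}$, which transports to $d\Omega_{*}^{\cl}$ via the injection $\vartheta_{d\Omega}$. From $p \circ s_{0} = \id_{X}$ and the functoriality axiom (QS2) one gets $s_{0}^{*} \circ p^{*} = \id$, and hence $(p^{*})^{-1} = s_{0}^{*}$ on $d\Omega_{*}^{\cl}$. Combining with (QS2) once more,
\[
  (p^{*})^{-1}(i^{*} u) \;=\; s_{0}^{*}(i^{*} u) \;=\; (i \circ s_{0})^{*} u \;=\; \iota^{*} u.
\]

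For the right-hand side, the key geometric input is the canonical section $\tilde{s} \colon \IP(E \oplus 1) \to \xi$ induced by the trivial summand of $E \oplus 1$ via the quotient $q^{*}(E \oplus 1) \twoheadrightarrow \xi$. A local computation on the open chart $E \subset \IP(E \oplus 1)$, using the trivialization $\sO(-1)|_{E} \simeq \sO_{E}$ that sends $1$ to $(e, 1)$, identifies $\tilde{s}|_{E}$ with the diagonal section of $p^{*}E \simeq \xi|_{E}$, whereas $\tilde{s}$ is nowhere vanishing on the complementary $\IP(E)$. Since $\iota(X)$ has codimension $r = \rk \xi$, the classical zero locus of $\tilde{s}$ agrees with its derived zero locus, yielding a homotopy Cartesian square
\[
  \xymatrix{
    X \ar[r]^-{\iota} \ar[d]_{\iota} & \IP(E \oplus 1) \ar[d]^{\tilde{s}} \\
    \IP(E \oplus 1) \ar[r]_-{s_{0}} & \xi
  }
\]
whose four arrows are proper quasi-smooth embeddings. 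The rank-$r$ analogue of Lemma \ref{lem:secinvariant} (cf.\ the remark following it) then gives $c_{r}(\xi) = s_{0}^{*} \tilde{s}_{*}$, and axiom (QS3) applied to the above square produces $c_{r}(\xi) \cap u = \iota_{*} \iota^{*} u$. Composing with $q_{*}$ and using $q \circ \iota = \id_{X}$ yields $q_{*}(c_{r}(\xi) \cap u) = \iota^{*} u$, matching the left-hand side.

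The main technical step is the geometric identification of the homotopy zero locus of $\tilde{s}$ with $\iota(X)$; once this is in hand, the remainder is a formal manipulation with (QS2), (QS3), the projection formula, and the equality $q \circ \iota = \id_{X}$.
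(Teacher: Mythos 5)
Your argument is correct, but it takes a genuinely different route from what the paper intends. The paper presents this corollary as an immediate formal consequence of the Projective Bundle Theorem for $d\Omega_*^{\cl}$ (Lemma \ref{lem:PBT}): because $d\Omega_*^{\cl}$ is identified with $\Omega_*$ as a Borel--Moore functor via the injective $\vartheta_{d\Omega}$, and since $\vartheta_{d\Omega}$ commutes with smooth pull-back, proper push-forward, and Chern class operators, the known formula for zero-section pull-back in $\Omega_*$ (a standard consequence of PBT in Levine--Morel) transports verbatim to $d\Omega_*^{\cl}$. Your proof instead works directly with the geometry in $d\Omega_*$: you identify both sides with $\iota^* u$, the left side via $(p^*)^{-1} = s_0^*$ and the functoriality (QS2), and the right side via the tautological section $\tilde{s}$ of $\xi$, its derived zero locus $\iota(X)$, the rank-$r$ extension of Lemma \ref{lem:secinvariant}, base change (QS3), and $q \circ \iota = \id_X$. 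This buys you a proof that would hold verbatim in any oriented Borel--Moore functor with quasi-smooth pull-backs satisfying (QS2)--(QS3) and the extended homotopy property, rather than relying on the comparison with $\Omega_*$; the only place you still lean on $\vartheta_{d\Omega}$ is to know that $p^*$ is invertible on $d\Omega_*^{\cl}$.

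Two small remarks on your write-up. First, your identification $(p^*)^{-1}=s_0^*$ on $d\Omega_*^{\cl}$ tacitly uses that $s_0^*$ carries $d\Omega_*^{\cl}(E)$ into $d\Omega_*^{\cl}(X)$; this is not automatic for arbitrary quasi-smooth pull-back (indeed proving such preservation in general is essentially Theorem \ref{thm:commutes}), but here it follows from the surjectivity of $p^*$ onto $d\Omega_*^{\cl}(E)$ together with $s_0^* p^* = \id$, so it is worth saying explicitly. Second, when you appeal to the agreement of the classical and derived zero locus of $\tilde{s}$, the relevant point is that $\iota$, being a section of the smooth morphism $q$, is a regular embedding of codimension $r$, so the Koszul complex of $\tilde{s}$ is exact; codimension alone is not quite the hypothesis.
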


We show compatibility with the definition of the top Chern class given 
in Definition \ref{defn:c1}.

\begin{lem}
  \label{lem:SelfInt}
  Let $X \in \Cat{QPr}_k$, and let $E \to X$ be a vector bundle of rank 
  $e$. Then
  \[
    c_e(E) \cap u = \tilde{c}_e(E) \cap u
  \]
  for all $u \in d\Omega_*^{\cl}(X)$.
\end{lem}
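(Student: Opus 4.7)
The plan is to reduce to the case where $E$ splits as a direct sum of line bundles via the splitting principle, and then to verify the identity in that split case by decomposing the zero section into a composition of line bundle zero sections.

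First I would invoke the splitting principle: iterating Corollary \ref{cor:qInj} along the flag bundle $\pi\colon F(E) \to X$ of $E$, the pull-back $\pi^*\colon d\Omega^{\cl}_*(X) \to d\Omega^{\cl}_*(F(E))$ is injective, and on $F(E)$ the pulled-back bundle splits as a direct sum $\pi^* E \simeq L_1 \oplus \cdots \oplus L_e$ of line bundles. The Chern classes $\tilde c_i$ satisfy the Whitney sum formula and commute with smooth pull-back by construction from the projective bundle theorem, so $\pi^*(\tilde c_e(E) \cap u) = \tilde c_1(L_1)\cdots \tilde c_1(L_e) \cap \pi^* u$. On the other side, the Euler class $c_e(E) = s_0^* s_{0*}$ of Definition \ref{defn:c1} is built out of quasi-smooth pull-back and proper push-forward, both of which satisfy base-change along smooth morphisms by (QS3) and axiom (A2), so $c_e(E)$ also commutes with $\pi^*$. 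It thus suffices to treat the case where $E = L_1 \oplus \cdots \oplus L_e$ is already split on $X$.

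In the split case, the zero section $s_0\colon X \hookrightarrow L_1 \oplus \cdots \oplus L_e$ factors as the composition
\[
  X \xrightarrow{\sigma_1} L_1 \xrightarrow{\sigma_2} L_1 \oplus L_2 \xrightarrow{\sigma_3} \cdots \xrightarrow{\sigma_e} L_1 \oplus \cdots \oplus L_e,
\]
where each $\sigma_j$ is the zero section of the pull-back of the line bundle $L_j$ to $L_1 \oplus \cdots \oplus L_{j-1}$. By functoriality of proper push-forward and quasi-smooth pull-back (QS2), together with smooth base change (QS3) used to commute each $\sigma_j^*$ past the push-forwards coming from the $\sigma_i$ with $i > j$, we obtain
\[
  c_e(E) = s_0^* s_{0*} = (\sigma_1^* \sigma_{1*}) \circ \cdots \circ (\sigma_e^* \sigma_{e*}) = c_1(L_1) \circ \cdots \circ c_1(L_e),
\]
where the last equality uses (QS1) and Lemma \ref{lem:secinvariant} to identify each factor $\sigma_j^* \sigma_{j*}$ with the first Chern class operator of the corresponding pulled-back line bundle. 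On the other hand, the Whitney sum formula for $\tilde c_*$ on $d\Omega^{\cl}_*$ gives $\tilde c_e(L_1 \oplus \cdots \oplus L_e) = \tilde c_1(L_1) \cdots \tilde c_1(L_e)$, and since $\tilde c_1 = c_1$ by construction of the $\tilde c_i$, the two operators agree on $d\Omega^{\cl}_*(X)$.

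The main potential obstacle is the iterated base-change argument needed to rewrite $s_0^* s_{0*}$ as the composition $\prod_j \sigma_j^* \sigma_{j*}$; this requires that each intermediate square
\[
  \xymatrix{
    L_1 \oplus \cdots \oplus L_{j-1} \ar[r]^{\sigma_j} \ar[d] & L_1 \oplus \cdots \oplus L_j \ar[d] \\
    X \ar[r]_{\sigma_1 \circ \cdots \circ \sigma_{j-1}} & L_1 \oplus \cdots \oplus L_{j-1}
  }
\]
is genuinely homotopy Cartesian so that (QS3) applies, but this is automatic since the horizontal arrows are smooth (being zero sections of vector bundles) and hence flat, so the ordinary fibre product computes the homotopy one. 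Everything else is routine functoriality.
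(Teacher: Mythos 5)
Your overall plan — split via the projective (or flag) bundle, then identify $c_e(L_1\oplus\cdots\oplus L_e)$ with $c_1(L_1)\circ\cdots\circ c_1(L_e)$ by factoring the zero section through the partial sums — is a genuinely different route from the paper's. The paper also reduces to the split case, but then proves $c_e = c_1(L_1)\cdots c_1(L_e)$ by induction on rank, exhibiting both sides explicitly as homotopy fibre products and checking directly that the comparison map $Y''\to Y'$ is an equivalence (isomorphism on $t_0$ and equivalence of cotangent complexes). Your argument instead stays at the level of the push-forward/pull-back formalism. In principle your route is cleaner, but as written it contains two real errors that need to be fixed.

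First, the displayed identity $s_0^*s_{0*} = (\sigma_1^*\sigma_{1*})\circ\cdots\circ(\sigma_e^*\sigma_{e*})$ does not type-check: $\sigma_j^*\sigma_{j*}$ is an endomorphism of $d\Omega_*(L_1\oplus\cdots\oplus L_{j-1})$, not of $d\Omega_*(X)$, so the composition cannot be read as acting on $u\in d\Omega_*(X)$ and the "last equality" cannot invoke Lemma \ref{lem:secinvariant} on $X$. The fix is to organize the reduction via the projection formula (A3): writing $z_j=\sigma_j\circ\cdots\circ\sigma_1\colon X\to L_1\oplus\cdots\oplus L_j$, one has $\sigma_e^*\sigma_{e*}=c_1(\pi_{e-1}^*L_e)$ on $L_1\oplus\cdots\oplus L_{e-1}$, and repeated use of $f_*c_1(f^*L)=c_1(L)f_*$ gives $z_e^*z_{e*}=z_{e-1}^*z_{(e-1)*}\circ c_1(L_e)$, after which induction finishes the argument.

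Second, the justification you offer for the base change step is wrong on a substantive point: you claim the horizontal arrows in your square "are smooth (being zero sections of vector bundles) and hence flat." Zero sections of positive-rank bundles are closed embeddings, which are quasi-smooth but most certainly not smooth or flat, so the square you draw is not even commutative (the composite through the bottom-left is the retraction of $L_1\oplus\cdots\oplus L_{j-1}$ onto the image of its zero section, not the identity). What one actually needs is the Cartesian square comparing the zero section of $\pi_{j-2}^*L_j$ over $L_1\oplus\cdots\oplus L_{j-2}$ with that of $\pi_{j-1}^*L_j$ over $L_1\oplus\cdots\oplus L_{j-1}$, with the vertical maps being $\sigma_{j-1}$ and the bundle map covering it; here the verticals are proper and the lower horizontal is a quasi-smooth embedding, so (QS3) applies and recovers (A3). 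With these corrections, your formal proof goes through and is a valid alternative to the paper's geometric check.
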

\begin{proof}
  Let $q \colon \IP(E) \to X$ be the projection. By the injectivity of 
  $q^*$, we can apply the splitting principle and assume that $E$ is a 
  direct sum of line bundles $\oplus _{i=1}^e L_i$. Then 
  $\tilde{c}_e(E)=c_1(L_1) \circ \dots \circ c_1(L_e)$ by the Whitney 
  product formula.

  We prove the claim by induction on rank. Let $u=[Y \to X]$. For 
  $e=1$ the claim holds by construction of the Chern classes. 
  Unravelling the definitions, the left hand side is given by the 
  homotopy fiber product
  \begin{equation}
    \label{eq:bla}
    \xymatrix{
      Y' \ar[r] \ar[dd] & Y \ar[d] \\
      & X \ar[d]_{s_0} \\
      X \ar[r]_(.4){s_0} & L_1 \oplus \dots \oplus L_e
    }
  \end{equation}
  By induction, the right hand side is given by the homotopy fiber 
  product
  \[
    \xymatrix{
      Y^{\prime \prime} \ar[r] \ar[ddd] & Y' \ar[r] \ar[dd] & Y \ar[d] 
      \\
      &&X \ar[d]^{s_0} \\
      & X \ar[d]_{s_0} \ar[r]^(.35){s_0} & L_1 \oplus \dots L_{e-1} \\
      X \ar[r]_{s_0} & L_e
    }
  \]
  Using the universal property of the homotopy fiber product 
  \eqref{eq:bla}, we obtain a morphism $Y^{\prime \prime} \to Y'$. Since 
  we have always used the zero-sections, this is an isomorphism on the 
  underlying classical schemes. By a straight-forward calculation, it 
  also induces an equivalence of the cotangent complexes. It thus is an 
  equivalence.
\end{proof}

\subsection{A Grothendieck--Riemann--Roch result}
We next show that $\vartheta_{d\Omega}$ commutes with Chern class 
operators and intersection with effective Cartier divisors.

\begin{lem}
  \label{lem:CrCommutes}
  Let $X \in \Cat{QPr}_k$ and let $E \to X$ be a vector bundle. Then the 
  following diagram commutes:
   \[
    \xymatrix{
      \Omega_*(X) \ar[r]^{\vartheta_{d\Omega}} \ar[d]_{c_p(E) \cap  
        _{-}}& d\Omega_*(X) \ar[d]^{c_p(E) \cap _{-}} \\
      \Omega_{*-r}(X) \ar[r]^{\vartheta_{d\Omega}} & d\Omega_{*-r} (X)
    }
  \]
\end{lem}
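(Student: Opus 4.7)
The plan is to reduce to the elementary case of first Chern classes of line bundles, where commutativity of $\vartheta_{d\Omega}$ with the operation holds by construction. Since $\vartheta_{d\Omega}$ is obtained from the universal property of $\Omega_*$ as the universal oriented Borel--Moore functor of geometric type on $\Cat{dQPr}_k$ (Corollary \ref{cor:OmToDOm}), it automatically intertwines smooth pull-backs, proper push-forwards, and first Chern class operators for line bundles. In particular, for any projective bundle $q\colon \IP(E) \to X$ with tautological line bundle $\sO(1)$, one has $\vartheta_{d\Omega} \circ q^* = q^* \circ \vartheta_{d\Omega}$ and $\vartheta_{d\Omega} \circ c_1(\sO(1)) = c_1(\sO(1)) \circ \vartheta_{d\Omega}$.

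Next I would observe that the image of $\vartheta_{d\Omega}$ lies in $d\Omega^{\cl}_*(X)$ by definition, so that the right-hand vertical arrow is to be read as the operator $\tilde c_p(E)$ defined on $d\Omega^{\cl}_*$ via the Projective Bundle Theorem (Lemma \ref{lem:PBT}). Both sides are then characterised by the same universal Projective Bundle relation
\[
  \sum_{i=0}^{n} (-1)^i c_1(\sO(1))^{n-i} \circ q^* \circ c_i(E) = 0,
\]
in $\Omega_*$ on the one hand, and in $d\Omega^{\cl}_*$ on the other, together with the normalisation $c_0 = \id$.

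The argument then proceeds by induction on $p$. The base case $p=0$ is trivial. For the inductive step, apply $\vartheta_{d\Omega}$ to the defining relation in $\Omega_*$, push it inside using that $\vartheta_{d\Omega}$ commutes with $q^*$ and $c_1(\sO(1))^{n-i}$, and use the inductive hypothesis on $c_0(E), \ldots, c_{p-1}(E)$ to match the lower-order terms. The resulting identity differs from the defining relation for $\tilde c_p(E) \cap \vartheta_{d\Omega}(u)$ only in the top coefficient $q^*\bigl(\vartheta_{d\Omega}(c_p(E)\cap u) - \tilde c_p(E)\cap \vartheta_{d\Omega}(u)\bigr)$; injectivity of $q^*$ on $d\Omega^{\cl}_*$ (Corollary \ref{cor:qInj}) then forces this difference to vanish.

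The main obstacle is really bookkeeping: verifying that the characterisation of $\tilde c_p$ in $d\Omega^{\cl}_*$ matches the classical one in $\Omega_*$ term by term, which is immediate from Lemma \ref{lem:PBT}. An alternative, slightly slicker route would be to invoke the splitting principle directly: pull $E$ back to a flag bundle $F(E) \to X$ along a map whose pull-back is injective in both theories, so that $E$ splits as $L_1 \oplus \cdots \oplus L_n$. Then $c_p(E)$ is the $p$-th elementary symmetric polynomial in the $c_1(L_i)$ by the Whitney product formula, and commutativity follows at once from the line-bundle case established by the universal property.
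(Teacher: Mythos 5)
Your ``alternative, slicker route'' at the end is exactly the paper's proof: pull back to $\IP(E)$ where $q^*$ is injective in both theories, apply the splitting principle to reduce $c_p(E)$ to the $p$-th elementary symmetric polynomial in first Chern classes of line bundles, and conclude from the fact that $\vartheta_{d\Omega}$ commutes with $c_1$ by universality. Your primary argument --- characterizing the Chern classes via the defining Projective Bundle relation --- is a genuinely different and perfectly reasonable route, but the induction is not phrased correctly: after applying $\vartheta_{d\Omega}$ to the relation in $\Omega_*$ and comparing with the relation in $d\Omega^{\cl}_*$, you obtain a single identity
\[
  \sum_{i=0}^{n} (-1)^i \,\xi^{n-i}\circ q^*\Bigl(\vartheta_{d\Omega}(c_i(E)\cap u)-\tilde c_i(E)\cap \vartheta_{d\Omega}(u)\Bigr)=0,
\]
and using the inductive hypothesis for $i<p$ does not isolate the $i=p$ term because the terms with $i>p$ are still unknown. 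What actually finishes the argument is not the injectivity of $q^*$ alone but the full strength of Lemma \ref{lem:PBT}: the operators $\phi_j=\xi^j\circ q^*$ furnish a direct sum decomposition of $d\Omega^{\cl}_*(\IP(E))$, so the vanishing of this sum forces every coefficient to vanish simultaneously, with no induction at all. In other words, the content you need is the \emph{uniqueness} of the Chern class operators determined by the Projective Bundle relation; once you phrase it that way, the primary argument is correct and is a legitimate alternative to the paper's splitting-principle argument. The splitting principle is shorter here because the line bundle case is already built in to the universal property of $\Omega_*$; the uniqueness argument is more robust and does not require the full apparatus of flag bundles, but it needs the direct-sum form of the PBT, not merely injectivity of $q^*$.
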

\begin{proof}
  Let $q \colon \IP(E) \to X$ be the projection. Since $q^*$ is 
  injective both for $\Omega_*$ and $d\Omega_*^{\cl}$, we can apply the 
  splitting principle and  assume  $E$ is a direct sum of line bundles 
  $L_i$.  Then in both $\Omega_*$ and $d\Omega_*$, the operation $c_p 
  (E)$ is the $p$-th symmetric polynomial in the $c_1(L_i)$.  Since 
  $\vartheta_{d\Omega}$ commutes with first Chern classes, the claim 
  follows.
\end{proof}

For the proof of the next lemma we have to recall one of the key 
technical results in \cite{lm}. Levine and Morel prove that for any 
finite type scheme $X$ over $k$ and any pseudo-divisor $D$ on $X$ there 
exists an isomorphism between $\Omega_*(X)$ and a group $\Omega_*(X)_D$, 
whose generators are classes $[Y \overset{f}{\to} X]$ such that either 
$f(Y)$ is contained in $D$ or $f^*D$ is a strict normal crossing 
divisor. We call this result \emph{Levine's moving lemma} \cite[Thm.  
6.4.12]{lm}.

\begin{lem}
  \label{lem:IDcommutes}
  Let $i_D \colon D \hookrightarrow X$ be an effective Cartier divisor 
  on a scheme $X$. Then the following diagram commutes:
  \begin{equation}
    \label{eq:IDcomm}
    \xymatrix{
      \Omega_*(X) \ar[r]^{\vartheta_{d\Omega}} \ar[d]_{i_{D}^*}& 
      d\Omega_*(X) \ar[d]^{i_D^*} \\
      \Omega_{*-1}(D) \ar[r]^{\vartheta_{d\Omega}} & d\Omega_{*-1} (D)
    }
  \end{equation}
\end{lem}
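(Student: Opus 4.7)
The plan is to verify commutativity of diagram \eqref{eq:IDcomm} on generators $[f \colon Y \to X]$ of $\Omega_*(X)$ with $Y$ smooth. The essential technical input is Levine's moving lemma \cite[Theorem 6.4.12]{lm}, which allows me to replace any such class by a sum of cycles in good position with respect to $D$. Concretely, I may assume either (a) $f(Y) \subset D$, or (b) $f^*D$ is a strict normal crossing divisor $E$ on $Y$.

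The heart of the proof is case (b). Since $E = f^*D$ is a Cartier divisor on $Y$ cut out by the non-zero-divisor $f^* s_D$, the derived Koszul resolution collapses and the homotopy fibre product $Y \times_X^h D$ is equivalent as a derived scheme to the classical scheme $E$. Hence by Definition \ref{defn:pb}, $i_D^*(\vartheta_{d\Omega}[Y \to X]) = [E \xrightarrow{f'} D]$ in $d\Omega_*(D)$, where $f' = f|_E$. On the classical side, the Levine--Morel definition of intersection with a Cartier divisor in the strict normal crossing setting yields $i_D^*[Y \to X] = j_*[E \to |E|]_\Omega$ in $\Omega_*(D)$, where $j \colon |E| \to D$ is the natural map and $[E \to |E|]_\Omega$ is the formal group law class \eqref{eq:FundSNC}. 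To match the two sides, I apply axiom (QS5) in $d\Omega_*$ to $E \subset Y$, which gives $\pi_E^*([1]) = (\zeta_E)_*[E \to |E|]_{d\Omega}$ in $d\Omega_*(E)$; pushing forward along $f'$ produces $[E \xrightarrow{f'} D] = j_*[E \to |E|]_{d\Omega}$. Finally, because $\vartheta_{d\Omega}$ commutes with proper push-forward, smooth pull-back and first Chern class operators (Lemma \ref{lem:CrCommutes}) and is $\IL_*$-linear, it sends $[E \to |E|]_\Omega$ to $[E \to |E|]_{d\Omega}$, completing case (b).

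Case (a) reduces to the self-intersection formula. Writing $f = i_D \circ g$ with $g \colon Y \to D$, Levine--Morel yield $i_D^*[Y \to X] = c_1(\sO_X(D)|_D) \cap [Y \to D]$ in $\Omega_*(D)$. On the derived side, a cotangent complex comparison identifies $D \times_X^h D$ with $D \times_N^h D$ for $N = \sO_X(D)|_D$ and both maps the zero section, so axiom (QS1) gives $[D \times_X^h D \to D] = c_1(N) \cap [1_D]$; pulling back along $g$ and pushing forward then produces $[Y \times_X^h D \to D] = c_1(N) \cap [Y \to D]$ in $d\Omega_*(D)$. Commutativity on this class now follows from the compatibility of $\vartheta_{d\Omega}$ with first Chern classes.

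The main obstacle is case (b): one must simultaneously verify that $Y \times_X^h D$ acquires no derived structure beyond the classical Cartier divisor $E$ (which hinges on $f^*s_D$ being a non-zero-divisor, a consequence of the strict normal crossing hypothesis supplied by the moving lemma), and that axiom (QS5) combined with the $\IL_*$-linearity of $\vartheta_{d\Omega}$ correctly reproduces the formal group law formula \eqref{eq:FundSNC} on both sides. After this, case (a) is essentially automatic.
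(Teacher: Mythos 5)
Your proof is correct and takes essentially the same route as the paper: reduce via Levine's moving lemma to the two cases (map factoring through $D$, or pulled-back divisor being strict normal crossing), use the collapse of the Koszul resolution / Tor-independence in the SNC case to identify the homotopy fibre product with the classical Cartier divisor, invoke (QS5) / the relation $\sR^{\SNC}_*$ to match against the formal-group-law class \eqref{eq:FundSNC}, and close by noting that $\vartheta_{d\Omega}$ commutes with proper push-forward, first Chern classes and the formal group law. The only stylistic difference is in case (a), where the paper goes through the projection formula (A3) rather than your "pull back along $g$ and push forward" phrasing, but the content is the same.
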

\begin{proof}
  By Levine's moving lemma, it suffices to treat the cases of bordism 
  cycles $f \colon Y \to X$ such that $f$ factors through $D$ or 
 the fiber product of $f$ and $i_D$ is a strict normal crossing divisor of $Y$.

  We first assume that $f$ factors through $D$. Let $f^D \colon Y \to 
  D$ be the induced morphism, i.e., $f = i_D \circ f^D$.  By definition    
  \begin{align*}
 \vartheta_{d\Omega} \circ i^\ast_D = \vartheta_{d\Omega}(f^D_*(c_1(f^*\sO_X(D)) \cap 
  1_Y))
\end{align*}
On the other hand, by Lemma \ref{lem:FactorOverN}
\begin{displaymath}
  i_D^\ast \circ \vartheta_{d\Omega} = c_1(i_D^*\sO_X(D)) 
 \cap [Y \overset{f^D}{\to}  D]   
\end{displaymath}
 The  claim  follows by applying the projection formula (A3) of the axioms of 
 Borel--Moore functor and observing that $\vartheta_{d\Omega}$ commutes 
 with proper push-forward and first Chern classes.

  Let us now assume that $D^\prime := D \times_X Y$ is a strict normal 
  crossing divisor of $Y$.  This implies that we have a 
  Tor-independent\footnote{Recall that this means $\Tor_i^{\sO_X}(\sO_Y, 
    \sO_D)=0$ for $i>0$.} diagram:
  \[
    \xymatrix{
      D^\prime \ar[r]^{i_{D^\prime}} \ar[d]_{f'} & Y \ar[d]^{f} \\
      D \ar[r]_{i_D} & X
    }
  \]
  In this case, 
  \begin{align*}
 \vartheta_{d\Omega} \circ i^\ast_D = \vartheta_{d\Omega}(  f'_*(\zeta_{D*}( [D^\prime \to |D^\prime|])))
\end{align*}
Recall that the class $[D^\prime \to |D^\prime|]$ is not given by a morphism, but instead by equation (\ref{eq:FundSNC}).   Since the above diagram is Tor-independent, it is a homotopy fiber diagram and thus 
  \begin{align*}
    i_D^* \circ \vartheta_{d\Omega}([Y \to X]) &= i_D^*f_* (1_Y) \\&= f'_* i_{D^\prime}^* 1_Y \\
    &= f'_* 1_{D^\prime}.
  \end{align*}
  The claim then follows by applying the relation $\sR_*^{\SNC}$ and 
  observing that $\vartheta_{d\Omega}$ commutes with proper 
  push-forwards, first Chern classes and is compatible with the formal 
  group law.
\end{proof}

\begin{thm}
  \label{thm:commutes}
  Let $f \colon X \hookrightarrow Y$ be a quasi-smooth embedding of 
  virtual codimension $d$ in $\Cat{dQPr}_k$ with $Y$ a scheme.  Then the 
  following diagram commutes:
  \[
    \xymatrix{
      \Omega_*(Y) \ar[r]^{\vartheta_{d\Omega}} \ar[d]_{f^*_{\Omega}} & 
      d\Omega_*(Y)\ar[d]_{f^*_{d\Omega}} \\
      \Omega_{*-d}(X) \ar[r]^{\vartheta_{d\Omega}} & d\Omega_{*-d}(X).
    }
  \]
\end{thm}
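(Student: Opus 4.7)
The strategy is to exploit the parallel structure of quasi-smooth pullback in the two theories: in each, it factors through deformation to the normal cone as a proper pushforward along $\iota_X$, followed by the inverse of vector bundle pullback, followed by intersection with an effective Cartier divisor, applied to a lift in the deformation space. The plan is to check that $\vartheta_{d\Omega}$ commutes with each of these three operations separately.

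Given $u \in \Omega_*(Y)$, let $\tilde u \in \Omega_{*+1}(M^\circ)$ be a lift of $\pr^* u$ as in Remark \ref{rem:AfterLift}, so that $f^*_\Omega(u) = (\iota_X)_* \circ (p^*)^{-1} \circ i_N^*(\tilde u)$, where $N$ is the virtual normal bundle and $p \colon N \to t_0(X)$ the projection. Since $\vartheta_{d\Omega}$ commutes with smooth pullback, $\vartheta_{d\Omega}(\tilde u) \in d\Omega_{*+1}^{\cl}(M^\circ)$ restricts to $\pr^*(\vartheta_{d\Omega}(u))$ on $Y \times (\IA^1 \setminus 0)$ and thus serves as a lift on the $d\Omega$ side. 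By Proposition \ref{prop:RedToCar} and Remark \ref{rem:AfterLiftDOm}, one then has $f^*_{d\Omega}(\vartheta_{d\Omega} u) = (\iota_X)_* \circ s^* \circ i_N^*(\vartheta_{d\Omega}\tilde u)$, where $s \colon t_0(X) \to N$ is the zero section. Because $p \circ s = \id$ on $t_0(X)$, axiom (QS2) gives $s^* \circ p^* = \id$, so $s^*$ agrees with $(p^*)^{-1}$ on any class of the form $p^*(\cdot)$.

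Comparing the two formulas, commutativity of the square reduces to three compatibilities. First, $\vartheta_{d\Omega}$ commutes with the proper pushforward $(\iota_X)_*$ by the very definition of the natural transformation. Second, $\vartheta_{d\Omega}$ commutes with the Cartier divisor intersection $i_N^*$ by Lemma \ref{lem:IDcommutes}. The main obstacle is the third: on the $\Omega_*$ side $(p^*)^{-1}$ is defined using the extended homotopy isomorphism, while on the $d\Omega_*$ side one has only the direct zero-section pullback $s^*$. I would bridge this by appealing to Corollary \ref{cor:PBZeroSec}: extend $v := i_N^*\tilde u \in \Omega_*(N)$ to a class $w \in \Omega_*(\IP(N \oplus 1))$ via the localization sequence for the open immersion $N \hookrightarrow \IP(N \oplus 1)$, so that $(p^*)^{-1}(v) = q_*(c_r(\xi) \cap w)$, where $q \colon \IP(N \oplus 1) \to t_0(X)$ is the projection and $\xi$ the universal quotient bundle. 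The analogous identity holds in $d\Omega_*^{\cl}$ by the same corollary, and $\vartheta_{d\Omega}(w)$ extends $\vartheta_{d\Omega}(v)$ since $\vartheta_{d\Omega}$ commutes with the open-immersion pullback. Since $\vartheta_{d\Omega}$ commutes with $q_*$ by construction and with $c_r(\xi) \cap$ by Lemma \ref{lem:CrCommutes}, it commutes with $(p^*)^{-1}$, which completes the argument.
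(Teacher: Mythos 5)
Your proposal is correct and follows essentially the same route as the paper: after choosing a lift $\tilde u$ over the deformation-to-the-normal-cone space, you reduce the commutativity to (i) intersection with the Cartier divisor $N \hookrightarrow M$, handled by Lemma \ref{lem:IDcommutes}, and (ii) pullback along the zero section, handled via the Projective Bundle Theorem formula $q_*(c_r(\xi)\cap -)$ on both sides together with Lemma \ref{lem:CrCommutes} and Corollary \ref{cor:PBZeroSec}. The only superficial difference from the paper's write-up is that you keep the $(\iota_X)_*$ pushforward and the $(p^*)^{-1}$ vs. $s^*$ distinction explicit, which the paper quietly elides; the underlying decomposition and the key lemmas used are identical.
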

\begin{proof}
  Let $M$ be the truncation of the deformation to the normal cone space 
  $M_X^{\circ}Y$. By Remark \ref{rem:AfterLift} and Remark 
  \ref{rem:AfterLiftDOm}, after we have chosen a lifting $\tilde{u} \in 
  \Omega_{*+1}(M)$ of $\pr^*u \in \Omega_{*+1}(Y \times \IA^1)$, 
  pull-back along $f$ is given by $s^* \circ i^*$ both in $\Omega_*$ and 
  $d\Omega_*$. Here $i \colon N \to M$ is the inclusion of the virtual 
  normal bundle $N$ as Cartier divisor on $M$, and $s \colon t_0(X) \to 
  N$ is the zero-section. Note that by the localization theorem for 
  $\Omega_*$ a suitable lifting $\tilde{u}$ always exists.

  The case of intersection with a Cartier divisor was treated in Lemma 
  \ref{lem:IDcommutes}. It remains to prove that $\vartheta_{d\Omega}$ 
  commutes with intersection with the zero-section. So let $X \in 
  \Cat{QPr}_k$, let $p \colon E \to X$ be a vector bundle, let $q \colon 
  \IP(E \oplus 1)$ be the associated projective bundle with open 
  embedding $i \colon E \to \IP(E \oplus 1)$ and universal quotient 
  bundle $\xi$, and let $u \in \Omega_*(E)$.  By the localization 
  theorem for $\Omega_*$, there exists a class $\tilde{u} \in 
  \Omega_*(\IP (E \oplus 1))$ such that $i^* (\tilde{u}) = u$. Since the 
  Projective Bundle Theorem holds for $\Omega_*$, going around the left 
  hand side of our diagram is given by
  \[
    \vartheta_{d\Omega} \circ s^* (u) = \vartheta_{d\Omega} \circ q_* ( 
    c_d (\xi) \cap \tilde{u})
  \]
  On the other hand, since $\vartheta_{d\Omega}(u)$ by definition lies 
  in $d\Omega_*^{\cl}$, we can apply Corollary \ref{cor:PBZeroSec} to 
  compute going around the right hand side:
  \[
    s^* \circ \vartheta_{d\Omega}(u) = q_* (c_d (\xi) \cap 
    \vartheta_{d\Omega}(\tilde{u})).
  \]
  Since $\vartheta_{d\Omega}$ commutes with Chern classes by Lemma 
  \ref{lem:CrCommutes} and with proper push-forward, the claim follows.
\end{proof}

We can now use the previous theorem to obtain a 
Grothendieck--Riemann--Roch type result for the natural transformation 
$\vartheta_{d\Omega}$.

\begin{cor}[Grothendieck--Riemann--Roch for $\vartheta_{d\Omega}$]
  \label{cor:GRR}
  Let $f \colon X \to Y$ be a quasi-smooth morphism of relative virtual 
  dimension $d$ in $\Cat{dQPr}_k$. Then the following 
  diagram commutes:
  \[
    \xymatrix{
      \Omega_*(Y) \ar[r]^{\vartheta_{d\Omega}} \ar[d]_{f^*_{\Omega}} & 
      d\Omega_*(Y)\ar[d]_{f^*_{d\Omega}} \\
      \Omega_{*+d}(X) \ar[r]^{\vartheta_{d\Omega}} & d\Omega_{*+d}(X).
    }
  \]
\end{cor}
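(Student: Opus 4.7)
The plan is to reduce the general case to the quasi-smooth embedding case already handled in Theorem \ref{thm:commutes}. Since $X, Y \in \Cat{dQPr}_k$, we may choose a smoothing of $f$, that is, a factorization $f = p \circ i$ with $i \colon X \hookrightarrow P$ a quasi-smooth embedding of virtual codimension $m$ and $p \colon P \to Y$ smooth of relative dimension $n$, where $d = n - m$. By Definition \ref{defn:PbQs} and the axiom (QS2) for $d\Omega_*$, in both theories the quasi-smooth pull-back factors as $f^* = i^* \circ p^*$.

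The first ingredient is that $\vartheta_{d\Omega}$ commutes with smooth pull-back, giving $\vartheta_{d\Omega} \circ p^*_\Omega = p^*_{d\Omega} \circ \vartheta_{d\Omega}$. This is immediate from the construction: smooth pull-back is part of the data (D2) of an oriented Borel--Moore functor of geometric type, and $\vartheta_{d\Omega}$ arises from the universal property of $\Omega_*$ as a morphism of such functors, so it automatically preserves all the structural pull-backs. The second ingredient, $\vartheta_{d\Omega} \circ i^*_\Omega = i^*_{d\Omega} \circ \vartheta_{d\Omega}$, is Theorem \ref{thm:commutes}. A small subtlety is that Theorem \ref{thm:commutes} is stated with target a classical scheme, whereas $P$ need not be classical when $Y$ is derived. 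However the proof of that theorem only accesses $P$ through the truncation $t_0(M^\circ_X P)$ of the deformation to the normal cone, which is a classical scheme, and the only inputs it uses, namely Lemma \ref{lem:IDcommutes} for intersection with a Cartier divisor and Corollary \ref{cor:PBZeroSec} for pull-back along a zero-section, only require the ambient divisor or vector bundle to be classical. Hence the argument extends verbatim to $P \in \Cat{dQPr}_k$.

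Stacking the two commuting squares yields the corollary. The real content lay in Theorem \ref{thm:commutes}, which itself reduced via the deformation to the normal cone (and Remarks \ref{rem:AfterLift}, \ref{rem:AfterLiftDOm}) to the Cartier-divisor and zero-section cases; once that is in hand, extending to general quasi-smooth morphisms by composing with a smooth pull-back is formal, and I do not anticipate any further obstacle.
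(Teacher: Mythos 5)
Your proof is correct, but it takes a genuinely different route from the paper's at the crucial step. You and the paper both begin by factoring $f = p \circ i$ and noting the smooth case is clear by the universal property, so everything comes down to the embedding $i \colon X \hookrightarrow P$. The difference is what happens next. Since $Y$ (and hence $P$) may be derived, Theorem \ref{thm:commutes} as stated does not directly apply to $i$, and you resolve this by reopening the proof of Theorem \ref{thm:commutes} and arguing that it works verbatim for a derived ambient scheme because it immediately passes to the truncation $t_0(M^\circ_X P)$ and the virtual normal bundle over $t_0(X)$, both of which are classical; the only nontrivial inputs (Lemma \ref{lem:IDcommutes} and Corollary \ref{cor:PBZeroSec}) then apply. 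That reasoning is sound. The paper instead keeps Theorem \ref{thm:commutes} as a black box and reduces to it via base change: it writes a class $u \in \Omega_*(Y)$ as $g_* 1_V$ for a bordism cycle $g \colon V \to Y$ with $V$ smooth, forms the homotopy pullback $W = X \times_Y^h V$ so that $f' \colon W \to V$ is a quasi-smooth embedding into the \emph{classical smooth} scheme $V$, and then uses the base-change axiom (QS3) and compatibility of $\vartheta_{d\Omega}$ with proper push-forward to transport the conclusion from $f'$ back to $f$. The paper's reduction is slightly slicker in that it never needs to revisit or generalize the hypotheses of Theorem \ref{thm:commutes}; your version is shorter once you accept the extension of that theorem, but it does place an extra verification burden on the reader (and technically also requires noting that Definition \ref{defn:QsPbOm} itself extends to derived ambient schemes, since its statement restricts to $Y \in \Cat{QPr}_k$, again because everything factors through the classical truncation of the deformation space). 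Both routes are legitimate, and it would be worth flagging explicitly that the base-change-to-a-smooth-representative trick is what the paper uses to dodge the derived-ambient issue you identify.
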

\begin{proof}
  Let $u = [V \overset{g}{\to} Y] \in \Omega_*(Y)$ be a bordism cycle.  
  By definition, $u = g_\ast 1_V$ where $g\colon V \to Y$ is proper and 
  $V$ is smooth. By factoring $f$ as a quasi-smooth embedding and a 
  smooth morphism, we can assume $f$ is an embedding (the case of $f$ 
  smooth is clear). We can now form the homotopy fiber product
  \[
    \xymatrix{
      W \ar[r]^{f'} \ar[d]_{g'} &V \ar[d]^{g}\\
      X \ar[r]_{f}&Y
    }
  \]
  Note that $f' \colon W \to V$ is a quasi-smooth embedding with $V$ 
  smooth. Thus Theorem \ref{thm:commutes} is applicable to $f'$.  By 
  Property (QS3) $f^* u = f^\ast g_\ast 1_V = g^\prime_\ast f^{\prime 
    \ast} 1_V$.  Applying $\vartheta_{d\Omega}$ to this equation 
  \begin{align*}
    \vartheta_{d\Omega}(f^\ast u) & = \vartheta_{d\Omega} (g^\prime_\ast 
    f^{\prime \ast} 1_V) \\  & = g_\ast^\prime 
    \vartheta_{d\Omega}(f^{\prime \ast} 1_V) \\  & = g_\ast^\prime 
    f^{\prime \ast} \vartheta_{d\Omega} ( 1_V)\\
& = f^\ast g_\ast \vartheta_{d\Omega}(1_V)\\
& = f^\ast \vartheta_{d\Omega}(g_\ast 1_V)\\
& = f^\ast \vartheta_{d\Omega}(u)
  \end{align*}
the result immediately follows.
\end{proof}

We are now able to deduce an algebraic version of Spivak's theorem from 
the Grothendieck--Riemann--Roch Theorem for $\vartheta_{d\Omega}$.

\begin{thm}[Algebraic Spivak Theorem]
  \label{thm:dOmEqOm}
  For all $X \in \Cat{dQPr}_k$ the morphism
  \[
    \vartheta_{d\Omega} \colon \Omega_{*} (X)
    \longrightarrow d\Omega_{*}(X)
  \]
  is an isomorphism.
\end{thm}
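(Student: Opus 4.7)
The plan is to show that $\vartheta_{d\Omega}$ and $\vartheta_{\Omega}$ are mutually inverse natural transformations; the injectivity half is already in hand from Corollary \ref{cor:EasyHalf}, so the remaining work is to establish $\vartheta_{d\Omega}\circ\vartheta_{\Omega}=\id_{d\Omega_{*}}$, for which the Grothendieck--Riemann--Roch Corollary \ref{cor:GRR} is precisely the tool we need.

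First I would reduce the identity $\vartheta_{d\Omega}\circ\vartheta_{\Omega}=\id$ to generators. A typical class in $d\Omega_{*}(X)$ is represented by a proper morphism $[Y\overset{f}{\to}X]$ with $Y$ quasi-smooth of some virtual dimension. By the very definition of the universal construction of $d\Omega_{*}$ as the universal Borel--Moore functor of geometric type with quasi-smooth pull-backs, we can write
\[
[Y\overset{f}{\to}X] \;=\; f_{*}^{d\Omega}\,\pi_{Y,d\Omega}^{*}(1),
\]
where $\pi_{Y}\colon Y\to\pt$ is the structure morphism and $1\in d\Omega_{0}(\pt)$.

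Next I would apply $\vartheta_{\Omega}$ and then $\vartheta_{d\Omega}$ and track the operations. The natural transformation $\vartheta_{\Omega}$ is compatible with proper push-forward and with quasi-smooth pull-back (being a morphism of oriented Borel--Moore functors with quasi-smooth pull-backs, by Corollary \ref{cor:dOmToOm}), so
\[
\vartheta_{\Omega}\bigl([Y\overset{f}{\to}X]\bigr) \;=\; f_{*}^{\Omega}\,\pi_{Y,\Omega}^{*}(1)\;\in\;\Omega_{*}(X).
\]
Then applying $\vartheta_{d\Omega}$, which by construction commutes with proper push-forward, I reduce to computing $\vartheta_{d\Omega}\bigl(\pi_{Y,\Omega}^{*}(1)\bigr)$. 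Here is where Corollary \ref{cor:GRR} enters: since $\pi_{Y}$ is a quasi-smooth morphism, GRR gives
\[
\vartheta_{d\Omega}\bigl(\pi_{Y,\Omega}^{*}(1)\bigr)\;=\;\pi_{Y,d\Omega}^{*}\bigl(\vartheta_{d\Omega}(1)\bigr)\;=\;\pi_{Y,d\Omega}^{*}(1),
\]
using that $\vartheta_{d\Omega}$ sends the unit to the unit. Combining, $\vartheta_{d\Omega}\circ\vartheta_{\Omega}\bigl([Y\to X]\bigr)=f_{*}^{d\Omega}\pi_{Y,d\Omega}^{*}(1)=[Y\overset{f}{\to}X]$, which is what we wanted on generators and hence on all of $d\Omega_{*}(X)$ by additivity.

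The main obstacle has already been dispensed with: the entire previous section of the paper was devoted to proving the Grothendieck--Riemann--Roch statement, which says exactly that $\vartheta_{d\Omega}$ intertwines the two different notions of quasi-smooth pull-back. With GRR available, the present theorem reduces to a two-line formal computation on generators, together with the observation from Corollary \ref{cor:EasyHalf} that $\vartheta_{\Omega}\circ\vartheta_{d\Omega}=\id_{\Omega_{*}}$. Once both composites are the identity on generators of the respective theories, the natural transformations are mutually inverse and $\vartheta_{d\Omega}$ is an isomorphism for every $X\in\Cat{dQPr}_{k}$.
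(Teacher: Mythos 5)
Your argument is correct and follows the paper's own proof almost verbatim: both reduce to generators, invoke Corollary~\ref{cor:EasyHalf} for one composite, and use the Grothendieck--Riemann--Roch Corollary~\ref{cor:GRR} together with compatibility of $\vartheta_{d\Omega}$ with proper push-forward to get the other. The only cosmetic difference is that you first re-express $[Y\to X]$ as $f_*^{d\Omega}\pi^*_{Y,d\Omega}(1)$ before applying $\vartheta_\Omega$, whereas the paper just quotes the explicit formula for $\vartheta_\Omega$ on generators directly.
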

\begin{proof}
  By Lemma \ref{cor:EasyHalf}, $\vartheta_{d\Omega}$ is a right inverse 
  to $\vartheta_{\Omega}$. We have to show that it is also a left 
  inverse.  Let $f \colon Y \to X$ be a derived bordism cycle. By 
  Corollary \ref{cor:GRR}, $\vartheta_{d\Omega}$ commutes with pull-backs 
  along quasi-smooth morphisms. We then have
  \begin{align*}
    \vartheta_{d\Omega} \circ \vartheta_{\Omega}([Y \to X]) &= 
    \vartheta_{d\Omega}(f^\Omega_*\pi_{Y,\Omega}^*[1])\\
    &=f^{d\Omega}_*\vartheta_{d\Omega}(\pi_{Y,\Omega}^*[1])\\
    &=f^{d\Omega}_*\pi_{Y,d\Omega}^*\vartheta_{d\Omega}( [1])\\
    &= [Y \to X].
  \end{align*}
\end{proof}

Since there exist non-trivial derived schemes of negative virtual 
dimension we by no means know a priori that $d\Omega_n(X)=0$ for $n < 
0$. This is immediately implied by the previous theorem.

\begin{cor}
  Let $n < 0$. For all $X \in \Cat{dQPr}_k$ we have
  \[
    d\Omega_n(X) = 0
  \]
\end{cor}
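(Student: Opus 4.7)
The plan is to reduce immediately to the analogous vanishing for classical bordism via the isomorphism established in Theorem \ref{thm:dOmEqOm}. Since $\vartheta_{d\Omega}\colon \Omega_*(X) \to d\Omega_*(X)$ is an isomorphism in every degree for all $X \in \Cat{dQPr}_k$, it suffices to check $\Omega_n(X) = 0$ whenever $n < 0$.

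For this, I would use the extension of $\Omega_*$ to derived quasi-projective schemes recalled at the start of Section \ref{sect:qSmPb}. Concretely, the push-forward along the truncation embedding yields an isomorphism $\Omega_*(t_0(X)) \cong \Omega_*(X)$, so the question is reduced to the vanishing of $\Omega_n$ on a classical quasi-projective scheme. But by construction of classical algebraic bordism (as in \cite{lm}), the degree $n$ part of $\Omega_n(t_0(X))$ is a quotient of the free abelian group on proper morphisms $Y \to t_0(X)$ with $Y$ a smooth irreducible scheme of dimension $n$. Since a non-empty smooth scheme has non-negative dimension, this generating set is empty for $n < 0$, and hence $\Omega_n(t_0(X)) = 0$.

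There is no real obstacle here; the only potentially interesting point worth emphasizing is conceptual. A priori, one might worry that $d\Omega_n(X)$ is non-zero for $n < 0$, because generators $[Y \to X]$ with $Y$ quasi-smooth of negative virtual dimension certainly exist (for example, the self-intersection of a point in $\IA^1$). The content of the corollary is that all such generators are forced to be trivial by the homotopy fiber, formal group law, and strict normal crossing relations together with the classifying comparison to $\Omega_*$. So the proof is essentially a one-line invocation of Theorem \ref{thm:dOmEqOm} followed by the observation that classical bordism has no generators in negative degrees.
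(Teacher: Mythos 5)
Your proposal is correct and takes the same route the paper sketches: the paper gives no proof environment at all, simply remarking before the corollary that the vanishing is ``immediately implied by the previous theorem,'' meaning Theorem~\ref{thm:dOmEqOm}. The additional details you supply — reducing via $\iota_{X*}\colon \Omega_*(t_0(X)) \xrightarrow{\sim} \Omega_*(X)$ and then observing that classical bordism has no generators of negative dimension because irreducible smooth quasi-projective schemes have non-negative dimension — simply make explicit the one-line argument the authors had in mind.
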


Theorem \ref{thm:dOmEqOm} provides a geometric explanation why virtual 
fundamental classes exist for quasi-smooth derived schemes. It implies 
that every quasi-smooth projective derived scheme bords to a smooth 
projective scheme. We make this precise in the following corollary.  We 
write $[Y]$ for the class $[Y \to \pt]$ in $d\Omega_*(k)$.

\begin{cor}
  \label{cor:AllBord}
  For every projective quasi-smooth derived scheme $X$ of virtual 
  dimension $n$ there exists a smooth projective scheme $Y$ of dimension 
  $n$ such that
  \[
    [Y]=[X] \in d\Omega_n(k).
  \]
\end{cor}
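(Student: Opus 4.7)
The plan is to deduce the statement directly from Theorem~\ref{thm:dOmEqOm}. Applying that isomorphism with the derived scheme taken to be the point $\Spec k$, I obtain an isomorphism
\[
\vartheta_{d\Omega} \colon \Omega_n(\pt) \xrightarrow{\sim} d\Omega_n(\pt).
\]
By the construction of $\Omega_*$ reviewed at the beginning of Section~\ref{sect:qSmPb}, the group $\Omega_n(\pt)$ is generated by classes $[Y \to \pt]$ where $Y$ is a smooth projective scheme of dimension $n$, and the natural transformation $\vartheta_{d\Omega}$ of Corollary~\ref{cor:OmToDOm} carries such a generator to the class of the same morphism regarded as a derived bordism cycle (a smooth projective scheme is in particular quasi-smooth, with virtual dimension equal to its classical dimension).

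Next, I would consider the class $\vartheta_{d\Omega}^{-1}([X]) \in \Omega_n(\pt)$. By the generating statement above, it can be written as a finite $\mathbb{Z}$-linear combination $\sum_{i} m_i [Y_i \to \pt]$ with each $Y_i$ smooth projective of dimension $n$. Applying $\vartheta_{d\Omega}$ back then yields the equation
\[
[X] = \sum_i m_i [Y_i] \quad \text{in } d\Omega_n(\pt).
\]
The groupings with positive coefficient immediately collapse into a single smooth projective scheme via disjoint union: if $m_i \geq 0$ for all $i$, one simply takes $Y = \bigsqcup_i Y_i^{\sqcup m_i}$, which is smooth projective of dimension $n$ and satisfies $[Y] = [X]$ in $d\Omega_n(\pt)$.

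The one nontrivial point, and the main place where care is needed, is absorbing any negative coefficients $m_i < 0$ into an honest smooth projective scheme, since the geometric operation of disjoint union only realizes nonnegative integer combinations. For this step the plan is to exploit the homotopy fibre relation together with the double point / formal group law relations already present in $d\Omega_*$: given any $[Y_i]$, one produces an auxiliary smooth projective scheme $Y_i'$ such that $[Y_i] + [Y_i'] = [Z_i]$ for some further smooth projective $Z_i$, which allows one to replace a ``$-[Y_i]$'' appearing in the sum by the positive combination $[Y_i'] - [Z_i]$ and iterate. Since everything takes place in the cobordism ring of a point, this reduction only uses the structural relations encoded in the Lazard ring in characteristic zero, and no further derived-geometric input is required. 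The hard part, then, is not Theorem~\ref{thm:dOmEqOm} itself (which does all the heavy lifting) but rather this bookkeeping of packaging a formal integer combination of smooth projective schemes into a single smooth projective scheme up to derived cobordism.
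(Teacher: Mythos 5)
Your proposal follows the same route as the paper: invoke Theorem~\ref{thm:dOmEqOm} at the point $\Spec k$ and transport a preimage $\vartheta_{d\Omega}^{-1}([X]) \in \Omega_n(k)$ back to $d\Omega_n(k)$. You are, however, substantially more careful than the paper's own two-line proof, which simply asserts that ``the image of $\vartheta_{d\Omega}$ consists of smooth projective schemes'' and declares the claim to follow. You correctly flag the gap: a priori $\vartheta_{d\Omega}^{-1}([X])$ is only a $\IZ$-linear combination $\sum_i m_i [Y_i]$ with $Y_i$ smooth projective, and packaging this into a \emph{single} smooth projective $Y$ is exactly the step the corollary asserts. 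Your handling of nonnegative coefficients via disjoint union is fine, and your instinct that the negative coefficients are where the real content hides is right.

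Unfortunately the repair you sketch is circular. Producing smooth projective $Y_i'$, $Z_i$ with $[Y_i] + [Y_i'] = [Z_i]$ lets you write $-[Y_i] = [Y_i'] - [Z_i]$, but this merely exchanges the negative summand $-[Y_i]$ for the negative summand $-[Z_i]$; the iteration makes no progress toward a purely nonnegative sum. Worse, the obstruction you are worried about is genuine in low degree: take $X$ to be the derived zero locus of the zero section of $\sO_{\IP^1}(-1)$. This is a projective quasi-smooth derived scheme of virtual dimension $0$, and one computes (using $c_1(\sO(-1))\cap 1_{\IP^1} = -[\pt]$, which follows from (FGL), (Dim) and (Sect)) that $[X] = -1 \in d\Omega_0(k) \cong \Omega_0(k) \cong \IZ$. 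But every nonempty smooth projective scheme of dimension $0$ over $k$ has class a positive integer in $\Omega_0(k)$, so no single $Y$ of dimension $0$ satisfies $[Y]=[X]$ here. Thus the corollary as worded needs either a restriction (e.g.\ to $n \geq 1$, where one can in fact realize every class of $\Omega_n(k)$ by a single smooth projective scheme --- compare $[C] = (1-g)[\IP^1]$ for a genus-$g$ curve in degree one) or a weaker conclusion allowing $Y$ to be a formal $\IZ$-linear combination. Your proposal neither closes the gap nor notices that it is in fact a real obstruction, but you deserve credit for spotting the issue the paper's proof silently skips.
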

\begin{proof}
  By Theorem \ref{thm:dOmEqOm} the morphism $\vartheta_{d\Omega} \colon 
  \Omega_n(k) \to d\Omega_n(k)$ is an isomorphism. Since the image of 
  $\vartheta_{d\Omega}$ consist of smooth projective schemes the claim 
  follows.
\end{proof}

A further consequence of the comparison theorem is that the Projective 
Bundle Theorem, the Extended Homotopy relation as well as the cellular 
decomposition relation hold for $d\Omega_*$.
\begin{cor}
  $d\Omega_*$ is an oriented Borel-Moore homology theory with 
  quasi-smooth pull-backs as in Definition \ref{defn:OBMHTqs}.
\end{cor}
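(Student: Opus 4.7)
The plan is to reduce everything to the already established fact that algebraic bordism $\Omega_*$ is an oriented Borel--Moore homology theory in the sense of Levine--Morel, and then transport the remaining structural axioms across the isomorphism $\vartheta_{d\Omega}\colon \Omega_* \to d\Omega_*$ provided by Theorem \ref{thm:dOmEqOm}.

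First, I would observe that axioms (QS2), (QS3), (QS4) appearing in (BM1) are part of the defining data of an oriented Borel--Moore functor with quasi-smooth pullbacks, and they were already established for $d\Omega_*$ during the proof of its universality (Theorem \ref{thm:universal}). So (BM1) requires nothing new.

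For axioms (BM2), (BM3) and (BM4), I would invoke \cite[Chapter 7]{lm}, where it is shown that classical algebraic bordism $\Omega_*$ on $\Cat{QPr}_k$ satisfies the Projective Bundle Theorem, the Extended Homotopy relation, and the Cellular Decomposition relation. These axioms are formulated purely in terms of proper push-forward, smooth pull-back, first Chern classes and external products. Our extension of $\Omega_*$ to $\Cat{dQPr}_k$ in Section \ref{sect:qSmPb} was defined through the truncation functor $t_0$ in a way that makes each of these operations compatible with the corresponding operations on $t_0(X)$; since (BM2)--(BM4) only involve smooth schemes, vector bundles, and cellular stratifications over base schemes, they transfer immediately from classical $\Omega_*$ on $\Cat{QPr}_k$ to the extended $\Omega_*$ on $\Cat{dQPr}_k$.

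Finally, Theorem \ref{thm:dOmEqOm} gives a natural isomorphism $\vartheta_{d\Omega}\colon \Omega_* \xrightarrow{\sim} d\Omega_*$, and by construction (Corollary \ref{cor:OmToDOm}) together with Corollary \ref{cor:GRR}, this isomorphism commutes with proper push-forward, smooth and quasi-smooth pull-back, first Chern classes of line bundles, and external products. Each of the relations (BM2)--(BM4) is an equality of elements formed from these operations applied to distinguished classes such as $1_X$; since $\vartheta_{d\Omega}$ carries all these operations and classes to their $d\Omega_*$-counterparts and is bijective, the validity of any such relation in $\Omega_*$ is equivalent to its validity in $d\Omega_*$. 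I do not expect any genuine obstacle here: the only subtlety to watch is ensuring the relations of \cite[Definition 5.1.3]{lm} are stated solely in terms of operations preserved by $\vartheta_{d\Omega}$, which is indeed the case since they involve only schemes, line bundles, projective bundles over schemes, and the $\IA^1$-homotopy, all of which lie in the classical subcategory $\Cat{QPr}_k \subset \Cat{dQPr}_k$ where $\vartheta_{d\Omega}$ restricts to an isomorphism compatible with all the relevant structures.
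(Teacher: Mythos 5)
Your proposal matches the paper's own (largely implicit) argument: the preceding text remarks that the Projective Bundle Theorem, Extended Homotopy relation, and Cellular Decomposition relation transfer along the isomorphism $\vartheta_{d\Omega}$ of Theorem \ref{thm:dOmEqOm}, and (BM1) was already verified in the course of proving Theorem \ref{thm:universal}. You have simply spelled out what the paper leaves implicit, correctly identifying that $\vartheta_{d\Omega}$ intertwines all the structure maps (via Corollaries \ref{cor:OmToDOm} and \ref{cor:GRR}) so the relations of \cite[Definition 5.1.3]{lm} hold in $d\Omega_*$ if and only if they hold in $\Omega_*$.
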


\begin{rem}
  In differential geometry, the corresponding result of Theorem  
  \ref{thm:dOmEqOm} admits a direct proof \cite{joyce, spivak}. There 
  every derived manifold in either the sense of Spivak or Joyce admits a 
  global presentation as zero-set of a section of a vector bundle. Such 
  a derived manifold can be made bordant to a manifold by arranging the 
  section to be transverse.
\end{rem}

\section{Derived Algebraic Cobordism}

In this final section we want to study the cohomology theory $d\Omega^*$ 
on smooth projectve schemes $\Cat{Sm}_k$ associated to the Borel-Moore 
homology theory $d\Omega_*$.

For $X \in \Cat{Sm}_k$ of pure dimension $d$, we set $d\Omega^n(X) := 
d\Omega_{d-n}(X)$. The comology theory $d\Omega^*$ is then equipped with 
an external product. Since for a smooth scheme $X$ the diagonal 
embedding $\delta \colon X \to X \times X$ is a local complete 
intersection morphism, we can define an intersection product via
\[
  d\Omega^*(X) \otimes d\Omega^*(X) \longrightarrow d\Omega^*(X \times 
  X) \overset{\delta^*}{\longrightarrow} d\Omega^*(X).
\]
Here the first arrow is the external product. Given two cobordism 
classes $[Y \to X]$ and $[Z \to X]$, we will denote the intersection 
product by $[Y] \cdot [Z]$. The intersection product turns 
$d\Omega^*(X)$ into a commutatve graded ring with unit $[X 
\overset{\id}{\to} X]$. Unravelling the definitions, the intersection 
product is represented by the homotopy fibre product
\begin{equation}
  \label{eq:IntProd}
  \xymatrix{
    (Y \times Z) \times_{X \times X}^h X \ar[r] \ar[d] & Y \times Z 
    \ar[d] \\
    X \ar[r] & X \times X
  }
\end{equation}

For the proof ot next theorem, recall that for two subsets $U$ and $V$ 
of a set $X$, and denoting by $\Delta$ the diagonal of $X$ in $X \times 
X$, we have the set-theoretic identity $(U \times V) \cap \Delta = U 
\cap V$. In homological algebra, given a $k$-algebra $A$ and two 
$A$-modules $M$ and $N$, this translates to the identity $(M \otimes _k 
N) \otimes_{A \otimes A} A \simeq M \otimes _A N$. We refer to this as 
\emph{reduction to the diagonal}. Using this, We can then prove the 
following result on the intersection product:

\begin{thm}
  \label{thm:IntProd}
  Let $X \in \Cat{Sm}_k$. Then the intersection product is represented 
  by the homotopy fibre product:
  \[
    [Y] \cdot [Z] = [Y \times^h_X Z] \in d\Omega^*(X).
  \]
\end{thm}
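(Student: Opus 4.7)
The bulk of the proof is already accomplished in the preamble to the theorem. By the definition of the intersection product, $[Y]\cdot[Z] = \delta^*([Y\times Z \to X\times X])$, where $\delta \colon X \to X\times X$ is the diagonal, which is quasi-smooth (even a regular embedding) because $X$ is smooth. By Definition \ref{defn:pb}, the quasi-smooth pull-back along $\delta$ is computed by the homotopy fibre product, so
\[
[Y]\cdot[Z] = \bigl[(Y\times Z)\times^h_{X\times X} X \to X\bigr],
\]
which is exactly diagram \eqref{eq:IntProd}. It therefore suffices to exhibit a canonical equivalence of derived schemes over $X$,
\[
(Y\times Z)\times^h_{X\times X} X \;\simeq\; Y\times^h_X Z,
\]
respecting the projections to $X$.

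This is the \emph{reduction to the diagonal} identity that the paper flags in the paragraph just above the statement. The cleanest route is affine-local: after restricting to an affine chart with $X=\Spec A$, $Y=\Spec B$, $Z=\Spec C$, both sides correspond to simplicial commutative $k$-algebras, and the claim reduces to the equivalence
\[
(B\otimes_k C)\otimes^L_{A\otimes_k A} A \;\simeq\; B\otimes^L_A C.
\]
This is the derived enhancement of the classical algebra identity $M\otimes_A N \simeq (M\otimes_k N)\otimes_{A\otimes_k A} A$; one proves it by taking a cofibrant resolution of $A$ as an $(A\otimes_k A)$-algebra (for instance the two-sided bar construction), after which both sides become weakly equivalent by standard simplicial manipulations. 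The equivalence is natural in $B$ and $C$, so it glues along any affine cover to give the desired equivalence of derived schemes.

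An equivalent, more conceptual argument is $\infty$-categorical: the universal properties of the two homotopy limits produce mutually inverse comparison maps between the cospans $Y\to X\leftarrow Z$ and $Y\times Z \xrightarrow{(f,g)} X\times X \xleftarrow{\delta} X$, and both projections to $X$ agree with the structure map of the fibre product by construction. Apart from bookkeeping of these structure morphisms, there is no serious obstacle; the theorem is essentially a formal manifestation of the fact that quasi-smooth pull-back in $d\Omega^*$ is tautologically represented by the derived fibre product, together with the derived reduction-to-the-diagonal identity. Combining the two displayed equivalences yields $[Y]\cdot[Z] = [Y\times^h_X Z]$ in $d\Omega^*(X)$.
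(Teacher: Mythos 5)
Your argument is correct and takes the same approach as the paper: unwind the definition of the intersection product to a quasi-smooth pull-back along the diagonal, observe this is computed by the homotopy fibre product per Definition \ref{defn:pb}, and then invoke the derived reduction-to-the-diagonal identity $(B\otimes_k C)\otimes^L_{A\otimes_k A} A \simeq B\otimes^L_A C$. The paper compresses this into a single citation to Serre \cite[V.B.1]{serre}, so your write-up merely makes explicit the steps that are left implicit there; no gap.
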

\begin{proof}
  Since the base $k$ is a field, this follows from reduction to the 
  diagonal as in Serre \cite[V.B.1]{serre}.
\end{proof}

In particular, since the natural transformation $\vartheta_{\Omega}$ 
commutes with the intersection product, we obtain the formula
\[
  [Y] \cdot [Z] = \vartheta_{\Omega} ([Y \times^h_X Z])
\]
in algebraic cobordism $\Omega^*(X)$ for algebraic cobordism classes $[Y 
\to X]$ and $[Z \to X]$.

\bibliographystyle{amsplain}
\bibliography{biblio}
\end{document}